\newif\ifreadkumminibib
\def\theenumi{\@alph\c@enumi}
\theoremstyle{plain}
\newtheorem{theorem}[equation]{Theorem}
\newtheorem{lemma}[equation]{Lemma}
\newtheorem{corollary}[equation]{Corollary}
\newtheorem{proposition}[equation]{Proposition}
\theoremstyle{definition}
\newtheorem{conjecture}[equation]{Conjecture}
\newtheorem{remark}[equation]{Remark}
\newenvironment{remarkbox}[1][]{%
    \begin{remark}[#1] \pushQED{\qed}}{\popQED \end{remark}}
\newtheorem{example}[equation]{Example}
\newenvironment{examplebox}[1][]{%
    \begin{example}[#1] \pushQED{\qed}}{\popQED \end{example}}
\newtheorem{definition}[equation]{Definition}
\newtheorem{notation}[equation]{Notation}
\newenvironment{notationbox}[1][]{%
    \begin{notation}[#1]\pushQED{\qed}}{\popQED \end{notation}}
\newtheorem{discussion}[equation]{Discussion}
\newenvironment{discussionbox}[1][]{%
    \begin{discussion}[#1]\pushQED{\qed}}{\popQED \end{discussion}}
\newtheorem{observation}[equation]{Observation}
\newtheorem{construction}[equation]{Construction}
\newtheorem{setup}[equation]{Setup}
\newcommand{\bbA}{\mathbb A}
\newcommand{\bfE}{\mathbf E}
\newcommand{\calI}{\mathscr I}
\newcommand{\frakm}{{\mathfrak m}}
 \let\strSh\calO
\newcommand{\calQ}{\mathcal Q}
\newcommand{\calR}{\mathcal R}
\newcommand{\calU}{\mathcal U}
\newcommand{\calV}{\mathcal V}
\newcommand{\calW}{\mathcal W}
\DeclareMathOperator{\Schur}{S}
\let\mhyphen=-
\newcommand{\ints}{\mathbb{Z}}
\newcommand{\complex}{\mathbb{C}}
\def\to{\longrightarrow}
\def\xyhookrightarrownotip{\ar@{^(-}}
\def\xyhookrightarrow{\ar@{^(->}}
\DeclareMathOperator{\rank}{rk}
\DeclareMathOperator{\projective}{\mathbb{P}}
\DeclareMathOperator{\reg}{reg}
\DeclareMathOperator{\Tor}{Tor}
\DeclareMathOperator{\Grass}{Gr}
\DeclareMathOperator{\depth}{depth}
\DeclareMathOperator{\homology}{H}
\newcommand{\define}[1]{\emph{#1}}
\def\RDerChar{\mathbf{R}}
\def\RDer{\@ifnextchar[{\R@Der}{\ensuremath{\RDerChar}}}
\def\R@Der[#1]{\ensuremath{\RDerChar^{#1}}}
\newcommand{\excise}[1]{}
\newcommand{\GL}{\mathrm{GL}}
\newcommand{\linearity}{the linearity property\xspace}
\title{Free resolutions of some Schubert singularities.}
\author[M.~Kummini]{Manoj Kummini}
\address{Chennai Mathematical Institute, Siruseri, Tamilnadu 603103. India}
\email{mkummini@cmi.ac.in}
\author{V. Lakshmibai}
\address{Northeastern University, Boston, Massachusetts. USA.}
\email{lakshmibai@neu.edu}
\author[P.~Sastry]{Pramathanath Sastry}
\address{Chennai Mathematical Institute, Siruseri, Tamilnadu 603103. India}
\email{pramath@cmi.ac.in}
\author{C.~S.~Seshadri}
\address{Chennai Mathematical Institute, Siruseri, Tamilnadu 603103. India}
\email{css@cmi.ac.in}
\thanks{The first author was supported by a CMI Faculty Development Grant.
The second author was supported by NSA grant H98230-11-1-0197, NSF grant 
0652386.}
\begin{document}

\begin{abstract}
In this paper we construct free resolutions of certain class of closed subvarieties
of affine spaces (the so-called ``opposite big cells'' of Grassmannians).
Our class covers the determinantal varieties, whose resolutions were first
constructed by A.~Lascoux (Adv. Math., 1978). Our approach uses the
geometry of Schubert varieties.  An interesting aspect of our work is its
connection to the computation of the cohomology of homogeneous bundles
(that are not necessarily completely reducible) on partial flag varieties.
\end{abstract}

\maketitle

\section{Introduction}
\label{sec:intro}
\numberwithin{equation}{section}

A classical problem in commutative algebra and algebraic geometry is to
describe the syzygies of defining ideals of interesting varieties.
Let $k \leq n \leq m$ be positive integers. The space $D_k$ of $m \times n$
matrices (over a field $\Bbbk$) of rank at most $k$
is a closed subvariety of the $mn$-dimensional affine space of all $m
\times n$ matrices. When $\Bbbk = \complex$,  a minimal free resolution of the coordinate ring
$\Bbbk[\strSh_{D_k}]$ as a
module over the coordinate ring of the the $mn$-dimensional affine space
(i.e. the $mn$-dimensional polynomial ring) was constructed by
A.~Lascoux~\cite {LascSyzVarDet78}; see also 
\cite[Chapter~6]{WeymSyzygies03}.

In this paper, we construct free resolutions for a larger class of
singularities, \textit{viz.},
Schubert singularities, i.e., the intersection of a singular Schubert
variety and the ``opposite big cell'' inside a Grassmannian.
The advantage of our method is that it is algebraic group-theoretic, and is
likely to work for Schubert singularities in more general flag varieties.
In this process, we have come up with a method to compute the cohomology of
certain homogeneous vector-bundles (which are not completely reducible) on
flag varieties.  We will work over $\Bbbk = \complex$.

Let $N = m+n$. Let $\GL_N = \GL_N(\complex)$ be the group of $N\times N$
invertible matrices. Let $B_N$ be the Borel subgroup of
all upper-triangular matrices  and $B_N^\mhyphen$ the \define{opposite} Borel
subgroup of all lower-triangular matrices in $\GL_N$.
Let $P$ be the maximal parabolic subgroup
corresponding to omitting the simple root $\alpha_{n}$, i.e, the
subgroup of $\GL_N$ comprising the matrices in which the $(i,j)$-th entry
(i.e., in row $i$ and column $j$) is zero, if $n+1 \leq i \leq N$ and $1
\leq j \leq n$; in other words,
\[
P = \left\{
\begin{bmatrix} A_{n\times n} & C_{n \times m} \\
0_{m\times n} & E_{m \times m} \end{bmatrix} \in \GL_N\right\}.
\]
We have a canonical identification of the Grassmannian of $n$-dimensional
subspaces of $\Bbbk^N$ with $\GL_N/P$.
Let $W$ and $W_P$ be the Weyl groups of $\GL_N$ and of $P$,
respectively; note that $W = S_N$ (the symmetric group) and $W_P
= S_n \times S_m$.
For $w \in W/W_P$, let $X_P(w) \subseteq \GL_N/P$ be the Schubert variety 
corresponding to $w$
(i.e., the closure of the $B_N$-orbit of the coset  $wP$ ($\in\GL_N/P$), equipped with the canonical reduced scheme structure).
 The $B_N^\mhyphen$-orbit of the
coset $(\mathrm{id}\cdot P)$
in $\GL_N/P$ is denoted by $O^\mhyphen_{\GL_N/P}$, and is usually called the \emph{opposite big
cell} in $\GL_N/P$; it can be identified with the $mn$-dimensional affine
space.
(See~\ref{sec:precisGLn}.)

Write $W^P$ for the set of minimal representatives (under the Bruhat order)
in $W$ for the elements of $W/W_P$. 
For $1 \leq r \leq n-1$, we consider certain subsets
$\calW_r$  of $W^P$ (Definition~\ref{definition:ClassWk}); there is $w \in 
\calW_{n-k}$ such that  $D_k = X_P(w) \cap O^\mhyphen_{\GL_N/P}$.
Note that for any $w \in W^P$, $X_P(w) \cap O^\mhyphen_{\GL_N/P}$ is a closed
subvariety of $O^\mhyphen_{\GL_N/P}$.
Our main result is a description of the minimal free
resolution of the coordinate ring of $X_P(w) \cap O^\mhyphen_{\GL_N/P}$ as a module
over the coordinate ring of $O^\mhyphen_{\GL_N/P}$ for every $w \in \calW_r$. This
latter ring is a polynomial ring. 
We now outline our approach.

First we recall the Kempf-Lascoux-Weyman ``geometric technique'' of
constructing minimal free resolutions.
Suppose that we have a commutative diagram of varieties \begin{equation}
\label{equation:genericKLW}
\vcenter{\vbox{%
\xymatrix{%
Z \xyhookrightarrow[r] \ar[d]^{q'} & \bbA  \times V \ar[d]^q \ar[r] & V \\
Y \xyhookrightarrow[r] & \bbA }}}
\end{equation}
where $\bbA$ is an affine space, $Y$ a closed subvariety of $\bbA$ and $V$ 
a
projective variety.
The map $q$ is first projection, $q'$ is proper and
birational,  and the inclusion $Z \hookrightarrow \bbA  \times V$ is a 
sub-bundle (over $V$) of the trivial bundle $\bbA  \times V$.
Let $\xi$ be the dual of the quotient bundle on $V$ corresponding to $Z$.
Then the derived direct image $\RDer q'_* \strSh_{Z}$ is quasi-isomorphic 
to a minimal complex
$F_\bullet$ with
\[
F_i = \oplus_{j \geq 0} \homology^j(V, \bigwedge^{i+j} \xi)
\otimes_\complex R(-i-j).
\]
Here $R$ is the coordinate ring of $\bbA$; it is a polynomial ring
and
$R(k)$ refers to twisting with respect to its natural grading.
If $q'$ is such that the natural map $\strSh_Y \to \RDer q'_* \strSh_{Z}$
is a quasi-isomorphism,
(for example, if $q'$ is a desingularization of $Y$ and $Y$ has rational
singularities), then $F_\bullet$  is a minimal free resolution of
$\complex[Y]$ over the polynomial ring $R$ .

The difficulty in applying this technique in any given situation is
two-fold: one must find a suitable morphism $q' : Z \to Y$ such
that the map $\strSh_Y \to \RDer q'_* \strSh_{Z}$
is a quasi-isomorphism and such that $Z$ is
a vector-bundle over a projective variety $V$; and, one must be
able to compute the necessary cohomology groups.
We overcome this for opposite cells in a certain class (which includes
the determinantal varieties) of Schubert varieties in a Grassmannian, in
two steps.

As the first step, we need to establish the existence of a diagram as above. This
is done using the geometry of Schubert varieties. 
We take $\bbA = O^\mhyphen_{\GL_N/P}$ and $Y = Y_P(w) := X_P(w) \cap O^\mhyphen_{\GL_N/P}$.
Let $\tilde P$ be a parabolic subgroup with $B_N \subseteq \tilde P 
\subsetneq P$.
The inverse image of $O^\mhyphen_{\GL_N/P}$ under
the natural map $\GL_N/\tilde P \to \GL_N/P$ is $O^\mhyphen_{\GL_N/P} \times P/\tilde 
P$. 
Let $\tilde w$ be the representative of
the coset $w\tilde P$ in $W^{\tilde P}$. Then $X_{\tilde P}(\tilde w)
\subseteq \GL_N/\tilde P$ (the Schubert subvariety of $\GL_N/\tilde P$ 
associated to $\tilde w$)
maps properly and birationally onto $X_{P}(w)$.
We may choose $\tilde P$ to ensure that $X_{\tilde P}(\tilde w)$ is 
smooth. 
%
%
Let
$Z_{\tilde{P}}(\tilde w)$ be the preimage of $Y_P(w)$ in
$X_{\tilde{P}}(\tilde w)$. We take $Z = Z_{\tilde{P}}(\tilde w)$. Then $V$, 
which is the image of $Z$ under the
second projection, is a smooth Schubert subvariety of $P/\tilde P$.
The vector-bundle $\xi$ on $V$ that we obtain is the restriction of a
homogeneous bundle on $P/\tilde P$.
Thus we get:
\begin{equation}
\label{equation:KLWdiagram}
\vcenter{\vbox{%
\xymatrix{%
Z_{\tilde P}(\tilde w) \ar[d]^{q'} \xyhookrightarrow[r] & O^\mhyphen_{\GL_N/P} \times 
V  \ar[d]^{q} \ar[r] & V\\
Y_{P}(w)  \xyhookrightarrow[r] & O^\mhyphen
}}}.
\end{equation}
See Theorem~\ref{theorem:birational} and
Corollary~\ref{corollary:ourRealization}.
In this diagram, $q'$ is a desingularization of $Y_P(w)$. Since it is known
that Schubert varieties have rational singularities, we have that the map 
$\strSh_Y \to \RDer q'_* \strSh_{Z}$ is a quasi-isomorphism, so
$F_\bullet$ is a minimal resolution.

As the second step, we need to determine the cohomology of the homogeneous
bundles $\wedge^t \xi$ over $V$. There are two ensuing issues: computing 
cohomology of  homogeneous vector-bundles over Schubert
subvarieties of flag varieties is difficult and, furthermore, these bundles
are not usually completely reducible, so one cannot apply the
Borel-Weil-Bott theorem directly.  We address the former issue by
restricting our class; if $w \in \calW_r$ (for some $r$) then $V$ will
equal $P/\tilde P$.  
Regarding the latter issue, we inductively replace $\tilde P$ by larger
parabolic subgroups (still inside $P$), such that at each stage, the
computation reduces to that of the cohomology of completely reducible
bundles on Grassmannians; using various spectral sequences, we are able
to determine the cohomology groups that determine the minimal free
resolution.
See Proposition~\ref{proposition:higherdirectimageSpecific} for the key
inductive step.
In contrast, in Lascoux's construction of the resolution of determinantal
ideals, one comes across only completely reducible bundles; therefore, one
may use the Borel-Weil-Bott theorem to compute the cohomology of the 
bundles $\wedge^t\xi$.

Computing cohomology of homogeneous bundles, in
general, is difficult, and is of independent interest; we hope that our
approach would be useful in this regard.
The best results, as far as we know, are due to G.~Ottaviani and
E.~Rubei~\cite {OttavianiRubeiQuivers2006}, which deal with general
homogeneous bundles on Hermitian symmetric spaces. The only Hermitian
symmetric spaces in Type A are the Grassmannians, so their results do not
apply to our situation.

Since the opposite big cell $O^\mhyphen_{\GL_N/P}$ intersects every $B_N$-orbit of
$\GL_N/P$, $Y_P(w)$ captures all the singularities of $X_P(w)$ for every $w 
\in W$. In this paper, we describe a construction of
a minimal free resolution of $\complex[Y_P(w)]$ over
$\complex[O^\mhyphen_{\GL_N/P}]$. We hope that our methods could shed some light on
the problem of construction of a locally free resolution of 
$\strSh_{X_P(w)}$
as an $\strSh_{\GL_N/P}$-module.

The paper is organized as follows. 
Section~\ref{sec:prelims} contains 
notations and conventions (Section~\ref{sec:notation}) and the necessary
background material on Schubert varieties
(Section~\ref{sec:precisGLn}) and
homogeneous bundles (Section~\ref{sec:homogvb}).
In Section~\ref{sec:schubertdesing}, we discuss properties of Schubert
desingularization, including the construction of
Diagram~\eqref{equation:KLWdiagram}. 
Section~\ref{sec:freeresolutions} is devoted to a review
of the Kempf-Lascoux-Weyman technique and its application to our problem.
Section~\ref {sec:steptwo} explains how the cohomology of the homogeneous
bundles on certain partial flag varieties can be computed;
Section~\ref {sec:examples} gives some examples.
Finally, in Section~\ref{sec:furtherrmks}, we describe Lascoux's resolution
in terms of our approach and describe the multiplicity and
Castelnuovo-Mumford regularity of $\complex[Y_P(w)]$.

\section*{Acknowledgements}
Most of this work was done during a visit of the first author to
Northeastern University and the visits of the second author to Chennai
Mathematical Institute; the two authors would like to thank the respective
institutions for the hospitality extended to them during their visits.
The authors thank V.~Balaji, Reuven Hodges and A.~J.~Parameswaran for 
helpful comments.
The computer algebra
systems \texttt{Macaulay2}~\cite{M2} and \texttt{LiE}~\cite{LiE} provided
valuable assistance in studying examples.

\section{Preliminaries}
\label{sec:prelims}
\numberwithin{equation}{subsection}

In this section, we collect various results about Schubert varieties,
homogeneous bundles and the Kempf-Lascoux-Weyman geometric technique.

\subsection{Notation and conventions}
\label{sec:notation}

We collect the symbols used and the conventions adopted in the rest of the
paper here.  For details on algebraic groups and Schubert varieties, the
reader may refer to ~\cite {BorelLinAlgGps91, JantzenReprAlgGps2003,
BilleyLakshmibaiSingularLoci2000, SeshadriSMT07}.

Let $m \geq n$ be positive integers and $N = m+n$.
We denote by $\GL_N$ (respectively,  
$B_N$, $B_N^\mhyphen$) the group of all (respectively, upper-triangular,
lower-triangular) invertible 
$N\times N$ matrices over $\complex$. 
The Weyl group $W$ of $\GL_N$ is isomorphic to the group $S_N$ of
permutations of $N$ symbols and is generated by the \define{simple
reflections} $s_i, 1 \leq i \leq N-1$, which correspond to the
transpositions $(i,i+1)$. 
For $w \in W$, its \define{length} is the 
smallest integer $l$ such that $w = s_{i_1}\cdots s_{i_l}$ as a
product of simple reflections.
For every $1 \leq i \leq N-1$, there is a
\define{minimal parabolic subgroup} $P_i$ containing $s_i$ (thought of as
an element of $\GL_N$)
and a \define{maximal parabolic subgroup} $P_{\widehat{i}}$ \emph{not
containing} $s_i$. Any parabolic subgroup can be written as 
$P_{\widehat{A}} := \bigcap_{i \in A} P_{\widehat{i}}$ for some $A \subset 
\{1, \ldots, N-1\}$. On the other hand, for $A \subseteq \{1, \ldots, 
N-1\}$ write $P_A$ for the subgroup of $\GL_N$ generated by $P_i, i \in A$. 
Then $P_A$
is a parabolic subgroup and $P_{\{1, \ldots, N-1\} \setminus A}=
P_{\widehat{A}}$.

The following is fixed for the rest of this paper:
\begin{enumerate}
\item $P$ is the maximal parabolic subgroup $P_{\widehat{n}}$ of
$\GL_N$;

\item for $1 \leq s \leq n-1$, $\tilde{P}_s$ is the parabolic
subgroup $P_{\{1, \ldots, s-1, n+1, \ldots, N-1\}} = \cap_{i=s}^n
P_{\widehat{i}}$ of $\GL_N$;

\item for $1 \leq s \leq n-1$, $Q_s$ is the parabolic
subgroup $P_{\{1, \ldots, s-1\}} = \cap_{i=s}^{n-1} P_{\widehat{i}}$ of 
$\GL_n$.
\end{enumerate}

We write the elements of $W$ in \define{one-line} notation: $(a_1,
\ldots, a_N)$ is the permutation $i \mapsto a_i$.  For any $A \subseteq 
\{1, \ldots, N-1\}$, define $W_{P_A}$ to be the
subgroup of $W$ generated by $\{s_i : i \in
A\}$. By $W^{P_A}$ we mean the subset of $W$ consisting of the minimal
representatives (under the Bruhat order) in $W$ of the elements of
$W/W_{P_A}$.
For $1 \leq i \leq N$, we represent the elements of
$W^{P_{\widehat{i}}}$ by sequences $(a_1, \ldots, a_i)$ with $1 \leq a_1 <
\cdots < a_i \leq N$ since under the action of the group
$W_{P_{\widehat{i}}}$, every element of $W$ can be represented minimally by
such a sequence.

For $w = (a_1, a_2, \ldots, a_n) \in W^P$, let $r(w)$ be the integer $r$ 
such that $a_r \leq n < a_{r+1}$.

We identify $\GL_N = \GL(V)$ for some $N$-dimensional vector-space $V$.  
Let $A := \{i_1 < i_2 < \cdots < i_r\} \subseteq \{1, \ldots, N-1\}$.  Then 
$\GL_N/P_{\widehat{A}}$ is the set of all flags $0 = V_0 \subsetneq V_{1}
\subsetneq V_2 \subsetneq  \cdots \subsetneq V_r \subsetneq V$ of subspaces
$V_j$ of dimension $i_j$ inside $V$. We call $\GL_N/P_{\widehat{A}}$ a
\define{flag variety}. If $A = \{1, \ldots, N-1\}$ (i.e.  $P_{\widehat{A}} 
= B_N$), then we call the the flag variety a \define{full flag
variety}; otherwise, a \define{partial flag variety}.
The \define{Grassmannian} $\Grass_{i,N}$ of $i$-dimensional subspaces of 
$V$
is $\GL_N/P_{\widehat{i}}$. 

Let ${\tilde{P}}$ be any parabolic subgroup containing $B_N$ 
and $\tau \in W$.  The \define{Schubert
variety} $X_{{\tilde{P}}}(\tau)$ is the closure inside $\GL_N/{\tilde{P}}$ of $B_N\cdot e_w$
where $e_w$ is the coset $\tau{\tilde{P}}$, endowed with the canonical reduced
scheme structure.  Hereafter, when we write $X_{\tilde{P}}(\tau)$, we mean that $\tau$ is 
the representative in $W^{\tilde{P}}$ of its coset.
The \define{opposite big cell} $O^\mhyphen_{\GL_N/{\tilde{P}}}$ in 
$\GL_N/{\tilde{P}}$ is  the $B_N^\mhyphen$-orbit of the coset 
$(\mathrm{id}\cdot {\tilde{P}})$ in $\GL_N/{\tilde{P}}$. 
Let $Y_{\tilde{P}}(\tau) := X_{\tilde{P}}(\tau) \cap 
O^\mhyphen_{\GL_N/{\tilde{P}}}$; we refer to $Y_{\tilde{P}}(\tau)$ as the 
\define{opposite cell} of $X_{\tilde{P}}(\tau)$.

We will write $R^+$, $R^\mhyphen$, $R^+_{\tilde{P}}$,
$R^\mhyphen_{\tilde{P}}$, to denote respectively, positive and negative roots for $\GL_N$
and for $\tilde{P}$.
We denote by $\epsilon_i$ the character that sends the invertible diagonal
matrix with $t_1, \ldots, t_n$ on the diagonal to $t_i$.

\subsection{Pr\'ecis on $\GL_n$ and Schubert varieties}
\label{sec:precisGLn}

Let $\tilde{P}$ be a parabolic subgroup of $\GL_N$ 
with $B_N \subseteq \tilde{P} \subseteq P$.
We will use the following proposition extensively in the sequel. 

\begin{proposition}
\label{proposition:structureOMinus}
Write $U^\mhyphen_{\tilde{P}}$ for the negative unipotent radical of
$\tilde{P}$.
\begin{asparaenum}
\item \label{proposition:structureOMinusUminus}
$O^\mhyphen_{\GL_N/\tilde{P}}$ can be naturally identified with 
$U^\mhyphen_{\tilde{P}}\tilde{P}/\tilde{P}$. 
\item \label{proposition:structureOMinusinvertibleA}
For
\[
z = \begin{bmatrix}
A_{n \times n} & C_{n \times m} \\
D_{m \times n} & E_{m \times m}
\end{bmatrix} \in \GL_N,
\]
$zP \in O^\mhyphen_{\GL_N/P}$ if and only if $A$ is invertible.
\item \label{proposition:structureOMinusInverseImage}
For $1 \leq s \leq n-1$, the inverse image of $O^\mhyphen_{\GL_N/P}$ under the
natural map $\GL_N/{\tilde{P}_s} \to \GL_N/P$ is isomorphic to $O^\mhyphen_{\GL_N/P} 
\times P/{\tilde{P}_s}$ as schemes. Every element of $O^\mhyphen_{\GL_N/P} \times 
P/{\tilde{P}_s}$ is of the form
\[
\begin{bmatrix}
A_{n \times n} & 0_{n \times m} \\
D_{m \times n} & I_{m}
\end{bmatrix} \mod \tilde{P}_s \in \GL_N/\tilde{P}_s.
\]
Moreover, two matrices
\[
\begin{bmatrix}
A_{n \times n} & 0_{n \times m} \\
D_{m \times n} & I_{m}
\end{bmatrix} \;\text{and}\;
\begin{bmatrix}
A'_{n \times n} & 0_{n \times m} \\
D'_{m \times n} & I_{m}
\end{bmatrix}
\]
in $\GL_N$ represent the same element modulo $\tilde{P}_s$ if and only if
there exists a matrix $q \in Q_s$ such that $A' = Aq$ and $D' = Dq$.
\item \label{proposition:SLnModQ}
For $1 \leq s \leq n-1$, $P/{\tilde{P}_s}$ is isomorphic to $\GL_n/Q_s$.
In particular, the projection map $O^\mhyphen_{\GL_N/P} \times P/{\tilde{P}} \to 
P/{\tilde{P}_s}$
is given by
\[
\begin{bmatrix}
A_{n \times n} & 0_{n \times m} \\
D_{m \times n} & I_{m}
\end{bmatrix} \mod \tilde{P}_s \mapsto A \mod \tilde{Q} \in
\GL_n/Q \simeq P/\tilde{P}_s.
\]
\end{asparaenum}
\end{proposition}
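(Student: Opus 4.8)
The plan is to prove the four parts in order; since the normal‑form clause of part~(c) uses the identification $P/\tilde{P}_s \cong \GL_n/Q_s$ of part~(d), I would establish that identification first. For part~(a), I would start from $B_N^\mhyphen = U_N^\mhyphen T$, where $U_N^\mhyphen$ is the full lower‑unipotent subgroup and $T$ the diagonal torus. The inclusions $T \subseteq B_N \subseteq \tilde{P}$ give $O^\mhyphen_{\GL_N/\tilde{P}} = B_N^\mhyphen\tilde{P}/\tilde{P} = U_N^\mhyphen\tilde{P}/\tilde{P}$. Factoring $U_N^\mhyphen$ into root subgroups and separating the roots of $R^\mhyphen_{\tilde{P}}$ (the negative roots of the Levi $L_{\tilde{P}}$) from those of $R^\mhyphen \setminus R^\mhyphen_{\tilde{P}}$ yields $U_N^\mhyphen = U^\mhyphen_{\tilde{P}}\cdot(U_N^\mhyphen \cap L_{\tilde{P}})$ with $U_N^\mhyphen \cap L_{\tilde{P}} \subseteq \tilde{P}$, hence $O^\mhyphen_{\GL_N/\tilde{P}} = U^\mhyphen_{\tilde{P}}\tilde{P}/\tilde{P}$. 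Since the roots of $U^\mhyphen_{\tilde{P}}$ are exactly the negative roots not occurring in $\tilde{P}$, we get $U^\mhyphen_{\tilde{P}} \cap \tilde{P} = \{e\}$, so the orbit map $U^\mhyphen_{\tilde{P}} \to O^\mhyphen_{\GL_N/\tilde{P}}$ is an isomorphism of varieties. This part is routine structure theory for parabolic subgroups.

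For part~(b), specializing (a) to $\tilde{P} = P$ identifies $O^\mhyphen_{\GL_N/P}$ with the cosets $u_D P$, where for an $m\times n$ matrix $D$ I write $u_D := \left[\begin{smallmatrix} I_n & 0 \\ D & I_m\end{smallmatrix}\right] \in U^\mhyphen_P$. Given $z$ in the block form of the statement with $A$ invertible, a one‑line block computation gives $(u_{DA^{-1}})^{-1}z = \left[\begin{smallmatrix} A & C \\ 0 & E - DA^{-1}C\end{smallmatrix}\right]$, and the Schur‑complement identity $\det z = \det A \cdot \det(E - DA^{-1}C)$ makes the lower‑right block invertible, so $(u_{DA^{-1}})^{-1}z \in P$ and $zP \in O^\mhyphen_{\GL_N/P}$. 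Conversely, if $zP \in O^\mhyphen_{\GL_N/P}$, write $z = u_D p$ with $p = \left[\begin{smallmatrix} A' & C' \\ 0 & E'\end{smallmatrix}\right] \in P$; reading off the top‑left block of $u_D p$ gives $A = A'$, which is invertible. Apart from the bookkeeping this is immediate.

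For part~(d), the crux is that $\tilde{P}_s$ is compatible with the Levi decomposition $P = U_P \rtimes L_P$, $L_P = \GL_n \times \GL_m$: inspecting block shapes shows $U_P \subseteq \tilde{P}_s$ and $\tilde{P}_s \cap L_P = Q_s \times \GL_m$, so by the modular law $\tilde{P}_s = U_P\cdot(Q_s \times \GL_m)$, whence $P/\tilde{P}_s \cong L_P/(Q_s \times \GL_m) \cong \GL_n/Q_s$, with $\left[\begin{smallmatrix} A & 0 \\ 0 & I_m\end{smallmatrix}\right]\bmod\tilde{P}_s \leftrightarrow A\bmod Q_s$. For part~(c), $\GL_N$‑equivariance of $\GL_N/\tilde{P}_s \to \GL_N/P$ makes the preimage of $O^\mhyphen_{\GL_N/P}$ equal to $U^\mhyphen_P\cdot(P/\tilde{P}_s)$; I would then show the multiplication map $U^\mhyphen_P \times P/\tilde{P}_s \to U^\mhyphen_P\cdot(P/\tilde{P}_s)$ is an isomorphism of schemes — it is a surjective morphism, injective because $U^\mhyphen_P \cap P = \{e\}$ by (a), and with regular inverse because (b) expresses the $U^\mhyphen_P$‑component of a coset $g\tilde{P}_s$ as $u_{DA^{-1}}$, a morphism on the locus $\det A \neq 0$. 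Using the representatives $\left[\begin{smallmatrix} A & 0 \\ 0 & I_m\end{smallmatrix}\right]$ of $P/\tilde{P}_s$ from (d), a general element of the preimage is $u_D\left[\begin{smallmatrix} A & 0 \\ 0 & I_m\end{smallmatrix}\right] = \left[\begin{smallmatrix} A & 0 \\ DA & I_m\end{smallmatrix}\right]$, and reparametrizing $DA \rightsquigarrow D$ gives the displayed normal form, while tracing the $P/\tilde{P}_s$‑factor through the identification in (d) gives the stated projection formula. For the ``moreover'' clause, writing $q' \in \tilde{P}_s$ as $\left[\begin{smallmatrix} \alpha & \gamma \\ 0 & \delta\end{smallmatrix}\right]$ with $\alpha \in Q_s$ (using $\tilde{P}_s = U_P\cdot(Q_s\times\GL_m)$), the equation $\left[\begin{smallmatrix} A & 0 \\ D & I_m\end{smallmatrix}\right]q' = \left[\begin{smallmatrix} A' & 0 \\ D' & I_m\end{smallmatrix}\right]$ forces $\gamma = 0$ and $\delta = I_m$ (as $A$ is invertible), leaving $q := \alpha \in Q_s$ with $A' = Aq$ and $D' = Dq$; the converse direction is clear.

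I expect the one genuinely delicate point to be the scheme‑theoretic — not merely set‑theoretic — isomorphism asserted in part~(c): one has to produce a \emph{regular} inverse to the multiplication map, and that is precisely where the explicit formula from part~(b) is used. Everything else reduces to standard facts about parabolics, root‑datum bookkeeping, and $2\times 2$ block‑matrix algebra.
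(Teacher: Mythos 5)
Your proposal is correct and takes essentially the same approach as the paper: root-theoretic bookkeeping for part~(a), the explicit block-matrix (Schur complement) decomposition for part~(b), the unique factorization $U^\mhyphen_P \times P \cong U^\mhyphen_P P$ followed by quotienting by $\tilde{P}_s$ for part~(c), and passage to the Levi quotient $P \to \GL_n$ for part~(d). You prove~(d) before~(c) and spell out a bit more explicitly (via the regular inverse coming from part~(b)) why the isomorphism in part~(c) is scheme-theoretic and not merely set-theoretic, but these are details the paper leaves implicit rather than a genuinely different argument.
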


\begin{proof}
\underline{\eqref{proposition:structureOMinusUminus}}:
Note that $U^\mhyphen_{\tilde{P}}$ is the subgroup of $\GL_N$ generated by
the (one-dimensional) root subgroups $U_{\alpha}, \alpha \in R^\mhyphen
\setminus R_{\tilde{P}}^\mhyphen$ and that 
$U^\mhyphen_{\tilde{P}}\tilde{P}/\tilde{P} = 
B_N^\mhyphen\tilde{P}/\tilde{P}$. Hence 
under the canonical projection $\GL_N \to \GL_N/P,
g\mapsto gP$, $U^\mhyphen_P$ is mapped onto
$O^\mhyphen_{\GL_N/\tilde{P}}$. It is easy to check that this is an
isomorphism.

\underline{\eqref{proposition:structureOMinusinvertibleA}}:
Suppose that $zP \in O^\mhyphen_{\GL_N/P}$.
By~\eqref{proposition:structureOMinusUminus}, we see that there exist 
matrices $A'_{n \times n}$, $C'_{n \times m}$, $D'_{m \times n}$ and $E'_{m 
\times m}$ such that \[
z_1 := \begin{bmatrix}
I_{n} & 0_{n \times m} \\
D'_{m \times n} & I_{m}
\end{bmatrix} \in U^\mhyphen_P, \;
z_2 := \begin{bmatrix}
A'_{n \times n} & C'_{n \times m} \\
0_{m \times n} & E'_{m \times m}
\end{bmatrix} \in P \;\text{and}\;
z = \begin{bmatrix}
A_{n \times n} & C_{n \times m} \\
D_{m \times n} & E_{m \times m}
\end{bmatrix} = z_1z_2.
\]
Hence $A = A'$ is invertible. Conversely, if $A$ is invertible, then we may
write $z = z_1z_2$ where
\[
z_1 := \begin{bmatrix}
I_n & 0 \\
DA^{-1} & I_{m}
\end{bmatrix} \in U^\mhyphen_P \;\text{and}\;
z_2 := \begin{bmatrix}
A & C \\
0 & E-DA^{-1}C
\end{bmatrix}.
\]
Since $z \in \GL_N$, $z_2 \in P$.

\underline{\eqref{proposition:structureOMinusInverseImage}}:
Let $z \in U^\mhyphen_PP \subseteq \GL_N$. Then we can write $z = z_1z_2$ 
uniquely with $z_1 \in U^\mhyphen_P$ and $z_2 \in P$.
For, suppose that
\[
\begin{bmatrix}
I_{n} & 0_{n \times m} \\
D_{m \times n} & I_{m}
\end{bmatrix}
\begin{bmatrix}
A_{n \times n} & C_{n \times m} \\
0_{m \times n} & E_{m \times m}
\end{bmatrix}
=
\begin{bmatrix}
I_{n} & 0_{n \times m} \\
D'_{m \times n} & I_{m}
\end{bmatrix}
\begin{bmatrix}
A'_{n \times n} & C'_{n \times m} \\
0_{m \times n} & E'_{m \times m}
\end{bmatrix}
\]
then $A=A'$, $C=C'$, $DA = D'A'$ and $DC+E = D'C'+E'$,
which yields that $D' = D$ (since $A=A'$ is invertible,
by~\eqref{proposition:structureOMinusinvertibleA}) and $E = E'$.
Hence $U^\mhyphen_P \times_{\complex} P = U^\mhyphen_PP$.
Therefore, for any parabolic subgroup $P' \subseteq P$, $U^\mhyphen_P 
\times_{\complex} P/P' = U^\mhyphen_PP/P'$.
The asserted isomorphism now follows by taking $P' = \tilde{P}_s$.

For the next statement, let \[
\begin{bmatrix}
A_{n \times n} & C_{n \times m} \\
D_{m \times n} & E_{m \times m}
\end{bmatrix} \in \GL_N
\]
with $A$ invertible (which we may assume
by~\eqref{proposition:structureOMinusinvertibleA}). Then we have a
decomposition (in $\GL_N$)
\[
\begin{bmatrix} A & C \\ D & E \end{bmatrix} = \begin{bmatrix} A & 
0_{n\times m} \\ D & I_m \end{bmatrix} \begin{bmatrix} I_n & A^{-1}C \\ 
0_{m\times n} & E-DA^{-1}C \end{bmatrix}.
\]
Hence 
\[
\begin{bmatrix} A & C \\ D & E \end{bmatrix} \equiv
\begin{bmatrix} A & 0_{n\times m} \\ D & I_m \end{bmatrix}  \mod 
\tilde{P}_s.
\]

Finally,
\[
\begin{bmatrix}
A_{n \times n} & 0_{n \times m} \\
D_{m \times n} & I_{m}
\end{bmatrix} \equiv
\begin{bmatrix}
A'_{n \times n} & 0_{n \times m} \\
D'_{m \times n} & I_{m}
\end{bmatrix} \mod \tilde{P}_s
\]
if and only if there exist matrices $q \in Q_s$, $q'_{n \times m}$  and
$\tilde q_{n \times n} \in \GL_m$ such that \[
\begin{bmatrix} A' & 0 \\ D' & I \end{bmatrix} =
\begin{bmatrix} A & 0 \\ D & I \end{bmatrix} \begin{bmatrix} q & q' \\ 0_{m 
\times n} & \tilde q
\end{bmatrix}, \]
which holds if and only if
$q'=0$, $\tilde q = I_m$, $A' = Aq$ and $D' = Dq$ (since
$A$ and $A'$ are invertible).

\underline{\eqref{proposition:SLnModQ}}: There is a surjective morphism of
$\complex$-group schemes $P
\to \GL_n$,
\[
\begin{bmatrix}
A_{n \times n} & C_{n \times m} \\
0_{m \times n} & E_{m \times m}
\end{bmatrix} \longmapsto A.
\]
This induces the required isomorphism. Notice that the element
\[
\begin{bmatrix}
A_{n \times n} & C_{n \times m} \\
D_{m \times n} & E_{m \times m}
\end{bmatrix} \mod \tilde{P}_s \in O^\mhyphen_{\GL_N/P} \times P/\tilde{P}_s
\]
decomposes (uniquely) as
\[
\begin{bmatrix}
I_{n} & 0 \\
DA^{-1} & I_{m}
\end{bmatrix}
\left(
\begin{bmatrix} A & C \\ 0 & E
\end{bmatrix} \mod \tilde{P}_s \right) \]
Hence it is mapped to $A \mod Q_s \in \GL_n/Q_s$. Now
use~\eqref{proposition:structureOMinusInverseImage}.
\end{proof}

\begin{discussionbox}
\label{discussionbox:Ominus}
Let $\tilde{P} = P_{\widehat{\{i_1, \ldots, i_t\}}}$ with $1 \leq i_1 <
\cdots < i_t \leq N-1$. Then using
Proposition~\ref{proposition:structureOMinus}%
\eqref{proposition:structureOMinusUminus} and its proof, 
$O^\mhyphen_{\GL_N/\tilde{P}}$ can be
identified with the affine space of lower-triangular matrices with possible
non-zero entries $x_{ij}$ at row $i$ and column $j$ where $(i,j)$ is such
that there exists 
$l \in \{i_1, \ldots, i_t\}$ such that $j \leq l < i \leq N$. 
To see this, note (from the proof of
Proposition~\ref{proposition:structureOMinus}%
\eqref{proposition:structureOMinusUminus}) that we are interested in those
$(i,j)$ such that the root $\epsilon_i - \epsilon_j$ belongs to 
$R^\mhyphen \setminus R_{\tilde{P}}^\mhyphen$. Since 
$R_{\tilde{P}}^\mhyphen = \bigcap_{k=1}^t R_{P_{\widehat{i_k}}}^\mhyphen$,
we see that we are looking for 
$(i,j)$ such that $\epsilon_i - \epsilon_j \in
R^\mhyphen \setminus R_{P_{\widehat{l}}}^\mhyphen$, for some $l \in 
\{i_1, \ldots, i_t\}$. For the maximal parabolic group $P_{\widehat{l}}\,$,
we have, 
$R^\mhyphen \setminus R_{P_{\widehat{l}}}^\mhyphen = 
\{\epsilon_i-\epsilon_j \mid 1\le j\le l<i\le N\}$.
Hence $\dim O^\mhyphen_{\GL_N/\tilde{P}} =
|R^\mhyphen\setminus R^\mhyphen_{\tilde{P}}|$.

Let $\alpha = \epsilon_i-\epsilon_j
\in R^\mhyphen\setminus R^\mhyphen_{\tilde{P}}$ and 
$l \in \{i_1, \ldots, i_t\}$.
Then the Pl\"ucker co-ordinate $p_{s_{\alpha}}^{(l)}$ on the Grassmannian
$\GL_N/P_{\widehat{l}}$ lifts to a regular function on 
${\GL_N/\tilde{P}}$, which we denote by the same symbol. Its restriction to 
${O^\mhyphen_{G/{\tilde{P}}}}$ is the 
the $l \times l$-minor with column indices $\{1, 2, \ldots, l\}$ and 
row indices $\{1,\ldots, j-1, j+1, \ldots, l, i\}$.
In particular, 
\begin{equation}
\label{equation:minorcoordinate}
x_{ij} = p_{s_{\alpha}}^{(j)}|_{O^\mhyphen_{G/{\tilde{P}}}}
\;\text{for every}\; \alpha = \epsilon_i-\epsilon_j \in 
R^\mhyphen\setminus R^\mhyphen_{\tilde{P}}.
\end{equation}
In general $p_{s_{\alpha}}^{(l)}$ need not be a linear form, or even
homogeneous; see the example discussed after
Definition~\ref{definition:linear}.
\end{discussionbox}

\begin{examplebox}
\label{examplebox:Ominus}
Figure~\ref{figure:Ominus} shows the shape of 
$O^\mhyphen_{\GL_N/\tilde{P}_s}$ for some $1 \leq s \leq n-1$. The
rectangular region labelled with a circled D is 
$O^\mhyphen_{\GL_N/P}$. The trapezoidal region 
labelled with a circled A is $O^\mhyphen_{P/\tilde{P}_s}$.
In this case, the $x_{ij}$ appearing in~\eqref{equation:minorcoordinate} 
are exactly those in the regions labelled A and B.
\begin{figure}
\setlength{\unitlength}{.5pt}
\begin{picture}(300,300)
\put(0,0){\line(0,1){300}}
\put(0,0){\line(1,0){10}}
\put(0,300){\line(1,0){10}}
\put(300,0){\line(0,1){300}}
\put(300,0){\line(-1,0){10}}
\put(300,300){\line(-1,0){10}}
\put(120,0){\line(0,1){300}}
\put(0,180){\line(1,0){300}}
\put(0,240){\line(1,0){60}}
\put(60,240){\line(1,-1){60}}
\put(60,240){\line(0,1){60}}

\put(25,270){{${I_s}$}}
\put(80,240){\large{$\mathbf{0}$}}
\put(205,240){\large{$\mathbf{0}$}}
\put(205,85){\large{${I_m}$}}

\put(45,210){\circle{32}}
\put(38,205){\large{$A$}}
\put(55,90){\circle{32}}
\put(48,85){\large{$D$}}
\end{picture}
\caption{Shape of $O^\mhyphen_{\GL_N/\tilde{P}_s}$}
\label{figure:Ominus}
\end{figure}
\end{examplebox}

\begin{remark}
\label{remark:SchubCMNormaletc}
$X_{{\tilde{P}}}(w)$ is an irreducible (and reduced) variety of dimension 
equal to the length of $w$.
(Here we use that $w$ is the representative in $W^{\tilde{P}}$.)
It can be seen easily that under the natural projection $\GL_N/B_N \to 
\GL_N/{\tilde{P}}$, $X_{B_N}(w)$ maps birationally onto $X_{\tilde{P}}(w)$
for every $w \in W^{\tilde{P}}$.
It is known that Schubert varieties
are normal, Cohen-Macaulay and have rational singularities;
see, e.g.,~\cite[Section~3.4]{BrionKumarFrobSplitting05}.
\end{remark}

\subsection{Homogeneous bundles and representations}
\label{sec:homogvb}

Let $Q$ be a parabolic subgroup of $\GL_n$. We collect here
some results about homogeneous vector-bundles on $\GL_n/Q$. Most of these
results are well-known, but for some of them, we could not find a
reference, so we give a proof here for the sake of completeness. Online
notes of G. Ottaviani~\cite {OttavianiRationalHomogVarieties1995} and of
D.~Snow~\cite {SnowHomogVB} discuss the details of many of these results.

Let $L_Q$ and $U_Q$  be respectively the Levi subgroup and the unipotent 
radical of $Q$.
Let $E$ be a finite-dimensional vector-space on which $Q$ acts on the
right; the vector-spaces that we will encounter have natural right action.

\begin{definition}
\label{definition:assbundle}
Define $\GL_n \times^Q E :=
(\GL_n \times E)/\sim$ where $\sim$ is the equivalence relation $(g,e) \sim 
(gq,eq)$ for every $g \in \GL_n$, $q \in Q$ and $e \in E$. 
Then $\pi_E : \GL_n \times^Q E \to \GL_n/Q, (g,e)\mapsto gQ$, is a 
vector-bundle called the
\define{vector-bundle associated to} $E$ (and the principal $Q$-bundle 
$\GL_n \to \GL_n/Q$).  For $g\in \GL_n,e\in E$, we write $[g,e] \in \GL_n 
\times^Q E$ for the equivalence class of
$(g,e) \in \GL_n \times E$ under $\sim$. 
We say that a vector-bundle $\pi : \bfE \to \GL_n/Q$ is
\define{homogeneous} if  $\bfE$ has a $\GL_n$-action and $\pi$ is
$\GL_n$-equivariant, i.e, for every $y \in \bfE$, $\pi(g\cdot y) = g \cdot
\pi(y)$.  \end{definition}

In this section, we abbreviate  $\GL_n \times^Q E$ as $\widetilde E$.  It 
is known that $\bfE$ is homogeneous if and only if
$\bfE \simeq \widetilde E$ for some $Q$-module $E$. (If this is the case, 
then $E$ is the fibre of
$\bfE$ over the coset $Q$.)
A homogeneous bundle $\widetilde E$ is said to be \define{irreducible} 
(respectively \define{indecomposable},
\define{completely reducible}) if $E$ is a
{irreducible} (respectively {indecomposable}, {completely reducible}) 
$Q$-module.
It is known that $E$ is completely reducible if and only if $U_Q$ acts
trivially and that $E$ is irreducible if and only if additionally it is
irreducible as a representation of $L_Q$. 
See~\cite[Section~5]{SnowHomogVB}
or~\cite[Section~10]{OttavianiRationalHomogVarieties1995} for the details.

Let $\sigma : \GL_n/Q \to \widetilde E$ be a section of $\pi_E$.  Let $g
\in \GL_n$; write $[h,f] = \sigma(gQ)$.  There exists a unique $q \in Q$ 
such that $h=gq$. 
Let $e = fq^{-1}$.
Then $[g,e] = [h,f]$. If $[h,f'] = [h,f]$, then
$f'=f$, so the assignment $g \mapsto e$ defines a function $\phi : \GL_n 
\to
E$. This is $Q$-equivariant in the following sense:
\begin{equation}
\label{equation:EquivForSectionsOfTildeE}
\phi(gq) = \phi(g)q, \;\text{for every $q \in Q$ and $g \in \GL_n$}.
\end{equation}
Conversely, any such map defines a section of $\pi_E$. The set of sections
$\homology^0(\GL_n/Q, \widetilde E)$
of $\pi_E$ is a vector-space with $(\alpha \phi)(g) = \alpha(\phi(g))$ for
every $\alpha \in \complex$, $\phi$ a section of $\pi_E$ and $g\in \GL_n$.
It is finite-dimensional.

Note that $\GL_n$ acts on $\GL_n/Q$ by multiplication on the left; setting 
$h \cdot [g,e] = [hg,e]$ for $g,h \in \GL_n$ and $e \in E$, we extend this 
to
$\widetilde E$. We can also define a natural $\GL_n$-action on
$\homology^0(\GL_n/Q, \widetilde E)$
as follows. For any map $\phi : \GL_n \to E$, set
$h \circ \phi$ to be the map $g \mapsto \phi(h^{-1}g)$.
If $\phi$ satisfies~\eqref{equation:EquivForSectionsOfTildeE}, then for every
$q \in Q$ and $g \in \GL_n$, $(h \circ \phi)(gq) =  \phi(h^{-1}gq) =
(\phi(h^{-1}g))q = ((h \circ \phi)(g))q$, so
$h \circ \phi$ also satisfies~\eqref{equation:EquivForSectionsOfTildeE}.
The action of $\GL_n$ on the sections is on the left:
$(h_2h_1) \circ \phi  = [g \mapsto \phi(h_1^{-1}h_2^{-1}g)] = [g \mapsto 
(h_1 \circ \phi)(h_2^{-1}g)] = h_2 \circ (h_1 \circ \phi)$. In fact, 
$\homology^i(\GL_n/Q, \widetilde E)$ is a
$\GL_n$-module for every $i$.

Suppose now that $E$ is one-dimensional. Then $Q$ acts on $E$ by a
character $\lambda$; we denote the associated line bundle on $\GL_n/Q$
by $L_\lambda$. 

\begin{discussionbox}
\label{discussionbox:taut}
Let $Q = P_{\widehat{i_1, \ldots, i_t}}$,
with $1 \leq i_1 < \cdots < i_t \leq n-1$.
A weight $\lambda$ is said to be \define{$Q$-dominant} if 
when we write $\lambda = \sum_{i=1}^n a_i \omega_i$ in terms of the
fundamental weights $\omega_i$, we have,  
$a_i \geq 0$ for all $i \not \in \{i_1, \ldots, i_t\}$, or equivalently, 
the associated line bundle (defined above)
$L_\lambda$ on $Q/B_n$ has global sections.
If we express $\lambda$ as $\sum_{i=1}^n \lambda_i \epsilon_i$, 
then $\lambda$ is $Q$-dominant if and only if 
for every $0 \leq j \leq t$, $\lambda_{i_j+1} \geq \lambda_{i_j+2} \geq
\cdots \geq \lambda_{i_{j+1}}$ where we set $i_0 =0$ and $i_{r+1} = n$.
We will write $\lambda = (\lambda_1, \ldots, \lambda_n)$ 
to mean that $\lambda = \sum_{i=1}^n \lambda_i \epsilon_i$.
Every finite-dimensional irreducible $Q$-module is of the form 
$\homology^0(Q/B_n, L_\lambda)$ for a $Q$-dominant weight $\lambda$.
Hence the irreducible homogeneous vector-bundles on $\GL_n/Q$ are in correspondence with 
$Q$-dominant weights. We describe them now. If $Q = P_{\widehat{n-i}}$,
then $\GL_n/Q = {\Grass_{i,n}}$. (Recall that, for us, the $\GL_n$-action 
on
$\complex^n$ is on the right.)
On $\Grass_{i,n}$, we have the \define{tautological sequence}
\begin{equation}
\label{equation:tautseq}
0 \to \calR_i \to \complex^{n} \otimes \strSh_{\Grass_{i,n}} \to 
\calQ_{n-i} \to 0
\end{equation}
of homogeneous vector-bundles. The bundle $\calR_i$ is called the
\define{tautological sub-bundle} (of the trivial bundle $\complex^n$) and 
$\calQ_{n-i}$ is called the \define{tautological quotient bundle}.  Every 
irreducible homogeneous bundle on $\Grass_{i,n}$ is of the form 
$\Schur_{(\lambda_{1}, \cdots, \lambda_{n-i})} \calQ_{n-i}^* \otimes 
\Schur_{(\lambda_{n-i+1}, \cdots, \lambda_{n})} \calR_{i}^*$
for some $P_{\widehat{n-i}}$-dominant weight $\lambda$. 
Here $\Schur_{\mu}$ denotes the \define{Schur functor} associated to the
partition $\mu$.
Now suppose that $Q = P_{\widehat{i_1, \ldots, i_t}}$ with $1 \leq i_1 <
\cdots < i_t \leq n-1$.  Since the action is on the right, $\GL_n/Q$ 
projects to $\Grass_{n-i,n}$
precisely when $i = i_j$ for some $1 \leq j \leq t$. For each $1 \leq j 
\leq t$, we can take the pull-back of the tautological bundles
$\calR_{n-i_j}$ and
$\calQ_{i_j}$ to $\GL_n/Q$ from $\GL_n/P_{\widehat{i_j}}$.
The irreducible homogeneous bundle corresponding to a $Q$-dominant weight
$\lambda$ is
$\Schur_{(\lambda_1, \ldots, \lambda_{i_1})}\calU_{i_1} \otimes
\Schur_{(\lambda_{i_1+1}, \ldots,
\lambda_{i_2})}(\calR_{n-i_1}/\calR_{n-i_2})^*
\otimes \ldots \otimes
\Schur_{(\lambda_{i_{t-1}+1}, \ldots,
\lambda_{i_t})}(\calR_{n-i_{t-1}}/\calR_{n-i_t})^*
\otimes
\Schur_{(\lambda_{i_{t}+1}, \ldots,
\lambda_{i_n})}(\calR_{n-i_{t}})^*$.
See~\cite[Section~4.1]{WeymSyzygies03}.
Hereafter, we will write $\calU_i = \calQ_i^*$. Moreover, abusing
notation, we will use $\calR_{i}$, $\calQ_i$, $\calU_i$ etc. for
these vector-bundles on any (partial) flag varieties on which they would
make sense.
\end{discussionbox}

A $Q$-dominant weight is called \define{$(i_1, \ldots, i_r)$-dominant}
in~\cite[p.~114]{WeymSyzygies03}.  Although our definition looks like
Weyman's definition, we should keep in mind that our action is on the
right. We only have to be careful when we apply the Borel-Weil-Bott theorem
(more specifically, Bott's algorithm). In this paper, our computations are
done only on Grassmannians. If $\mu$ and $\nu$ are partitions, then 
$(\mu,\nu)$ will be $Q$-dominant (for a suitable $Q$), and will give us the
vector-bundle $\Schur_\mu \calQ^* \otimes \Schur_\nu \calR^*$ (this is
where the right-action of $Q$ becomes relevant) and to compute its
cohomology, we will have to apply Bott's algorithm to the $Q$-dominant
weight $(\nu,\mu)$.  (In~\cite{WeymSyzygies03}, one would get $\Schur_\mu 
\calR^* \otimes \Schur_\nu \calQ^*$ and would apply Bott's
algorithm to $(\mu,\nu)$.) See, for example, the proof of
Proposition~\ref{proposition:vanishingForxi} or the examples that follow 
it.

\begin{proposition}
\label{proposition:higherdirectimage}
Let $Q_1 \subseteq Q_2$ be parabolic subgroups and $E$ a $Q_1$-module.  Let 
$f : \GL_n/Q_1 \to \GL_n/Q_2$ be the natural map. Then for every $i \geq 0$, 
$R^if_*(\GL_n \times^{Q_1} E) = \GL_n \times^{Q_2} \homology^i(Q_2/Q_1,
\GL_n \times^{Q_1} E)$.
\end{proposition}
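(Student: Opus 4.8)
The plan is to reduce the statement to the classical fact that higher direct images along a fibration commute with forming associated bundles, using that $f$ is itself a locally trivial fibration with fibre $Q_2/Q_1$. First I would note that $f : \GL_n/Q_1 \to \GL_n/Q_2$ is a Zariski-locally trivial fibre bundle with fibre $F := Q_2/Q_1$; indeed, over a trivializing open $U \subseteq \GL_n/Q_2$ (obtained from a local section of the principal $Q_2$-bundle $\GL_n \to \GL_n/Q_2$, which exists since $Q_2$ is special) we have $f^{-1}(U) \cong U \times F$, and the transition functions act on the $F$-factor through left multiplication by $Q_2$. Over such a $U$, the sheaf $\GL_n \times^{Q_1} E$ restricts to the pullback of the $Q_2$-homogeneous bundle $Q_2 \times^{Q_1} E$ on $F$ under the second projection $U \times F \to F$. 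Hence by flat base change (or simply by the projection formula over $U$), $R^i f_*(\GL_n \times^{Q_1} E)|_U \cong \strSh_U \otimes_\complex \homology^i(F, Q_2 \times^{Q_1} E)$, a trivial bundle with fibre $\homology^i(Q_2/Q_1, \GL_n \times^{Q_1}E)$ (the fibre of $\GL_n \times^{Q_1}E$ over the relevant coset, restricted to $F$, is exactly $Q_2 \times^{Q_1}E$).

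Next I would upgrade this local description to the claimed global $\GL_n$-equivariant identification. The point is that $R^i f_* (\GL_n \times^{Q_1}E)$ is a $\GL_n$-equivariant coherent sheaf on $\GL_n/Q_2$ (the $\GL_n$-action on $\GL_n/Q_1$ covers that on $\GL_n/Q_2$, and $\GL_n \times^{Q_1}E$ is $\GL_n$-equivariant, so its higher direct images inherit equivariant structures by functoriality). By the equivalence of categories between $\GL_n$-equivariant coherent sheaves on $\GL_n/Q_2$ and $Q_2$-modules (sending a sheaf to its fibre over the base point $Q_2$), it suffices to identify the fibre of $R^i f_*(\GL_n \times^{Q_1}E)$ over $Q_2$ as a $Q_2$-module. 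Since $f$ is flat (being a fibre bundle) and the fibre $f^{-1}(Q_2) = Q_2/Q_1 =: F$ is proper, the base change map
\[
\bigl(R^i f_* (\GL_n \times^{Q_1}E)\bigr) \otimes_{\strSh_{\GL_n/Q_2}} \kappa(Q_2)
\longrightarrow \homology^i\bigl(F, (\GL_n \times^{Q_1}E)|_F\bigr)
\]
is an isomorphism; and $(\GL_n \times^{Q_1}E)|_F$ is canonically the $Q_2$-homogeneous bundle $Q_2 \times^{Q_1}E$ on $F = Q_2/Q_1$, so the fibre in question is $\homology^i(Q_2/Q_1, \GL_n \times^{Q_1}E)$ with its natural $Q_2$-module structure. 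Applying the equivalence of categories in reverse yields $R^i f_*(\GL_n \times^{Q_1}E) = \GL_n \times^{Q_2} \homology^i(Q_2/Q_1, \GL_n \times^{Q_1}E)$, as asserted.

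I expect the main obstacle to be a careful treatment of the equivariant structures — verifying that the base-change isomorphism above is compatible with the $Q_2$-actions on both sides, so that it really is an isomorphism of $Q_2$-modules and not merely of vector spaces. This is where one must be attentive: the $Q_2$-action on the fibre $\kappa(Q_2)$ side comes from the equivariant structure on $R^if_*$, while on the cohomology side it comes from $Q_2$ acting on $F = Q_2/Q_1$ and on the bundle $Q_2 \times^{Q_1}E$; these agree because both are induced from the single $\GL_n$-equivariant structure on $\GL_n \times^{Q_1}E$ by restriction along the isotropy subgroup $Q_2$ of the base point. Everything else — flatness of $f$, properness of the fibre, existence of local sections of $\GL_n \to \GL_n/Q_2$ (parabolic subgroups are special), and the equivalence between equivariant sheaves and $Q_2$-modules — is standard and can be cited. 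Alternatively, one could avoid invoking the equivalence of categories and instead construct the comparison map $\GL_n \times^{Q_2}\homology^i(Q_2/Q_1, \GL_n \times^{Q_1}E) \to R^if_*(\GL_n \times^{Q_1}E)$ directly and check it is an isomorphism by the local computation of the first paragraph; I would present whichever is shorter.
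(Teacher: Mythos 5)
Your proof is correct, but it takes a genuinely different route from the paper. The paper argues entirely on the representation-theoretic side of the equivalence between homogeneous bundles on $\GL_n/Q$ and finite-dimensional $Q$-modules: it identifies $f^*$ (restricted to homogeneous bundles) with the restriction functor $\mathrm{Res}_{Q_1}^{Q_2}$, invokes uniqueness of right adjoints to match $f_*$ with $\mathrm{Ind}_{Q_1}^{Q_2}$ (Frobenius reciprocity on the module side, the usual $(f^*,f_*)$ adjunction on the sheaf side), passes to derived functors to get $R^if_*\leftrightarrow R^i\mathrm{Ind}_{Q_1}^{Q_2}$, and finally uses the standard description of induction as $\homology^0(Q_2/Q_1,\GL_n\times^{Q_1}-)$. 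Your argument instead does the geometry by hand: Zariski-local triviality of $f$, a local computation of $R^if_*$ via the projection formula showing it is a vector bundle, proper flat base change to identify its fibre at the base point as $\homology^i(Q_2/Q_1,(\GL_n\times^{Q_1}E)|_{Q_2/Q_1})$, and then the fibre-functor equivalence between $\GL_n$-equivariant coherent sheaves on $\GL_n/Q_2$ and $Q_2$-modules to globalize. The trade-off is about what gets outsourced: the paper leans on Jantzen's careful development (in particular that the derived induction functor agrees with the sheaf-theoretic higher direct image), while yours replaces that citation with an explicit base-change argument, at the price of needing the compatibility of $Q_2$-actions in the base-change map, which you correctly flag as the delicate step. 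One small remark: you do not actually need to invoke that parabolic subgroups are special groups to get Zariski-local triviality of $\GL_n\to\GL_n/Q_2$; a section over the opposite big cell comes directly from the Bruhat decomposition and can be translated around, which is perhaps the more elementary justification in this setting.
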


\begin{proof}
For $Q_2$ (respectively, $Q_1$), the category of homogeneous vector-bundles 
on
$\GL_n/Q_2$ (respectively, $\GL_n/Q_1$) is equivalent to the category of
finite-dimensional $Q_2$-modules (respectively, finite-dimensional
$Q_1$-modules). Now, the functor $f^*$ from the category of homogeneous
vector-bundles over $\GL_n/Q_2$ to that over $\GL_n/Q_1$ is equivalent to
the restriction functor $\mathrm{Res}_{Q_1}^{Q_2}$. Hence their
corresponding right-adjoint functors $f_*$ and the induction functor
$\mathrm{Ind}_{Q_1}^{Q_2}$
are equivalent; one may refer to~\cite[II.5, p.~110]{HartAG}
and~\cite[I.3.4, `Frobenius Reciprocity']{JantzenReprAlgGps2003} to see
that these are indeed adjoint pairs. Hence, for homogeneous bundles on
$\GL_n/Q_1$, $R^if_*$ can be computed using $R^i\mathrm{Ind}_{Q_1}^{Q_2}$.
On the other hand, note that $\mathrm{Ind}_{Q_1}^{Q_2}(-)$ is the functor
$\homology^0(Q_2/Q_1, \GL_n \times^{Q_1}-)$ on $Q_1$-modules (which follows 
from~\cite[I.3.3, Equation~(2)]{JantzenReprAlgGps2003}).  The proposition 
now follows.
\end{proof}

\section{Properties of Schubert desingularization}
\label{sec:schubertdesing}
\numberwithin{equation}{section}

This section is devoted to proving some results on smooth Schubert
varieties in partial flag varieties.
In Theorem~\ref{theorem:smoothnessAndLinearity}, we show that opposite
cells of certain smooth Schubert varieties in $\GL_N/{\tilde{P}}$ are linear
subvarieties of the affine variety $O^\mhyphen_{\GL_N/{\tilde{P}}}$, where
$\tilde{P} = \tilde{P}_s$ for some $1 \leq s \leq n-1$. 
Using this, we show in
Theorem~\ref{theorem:birational} that if 
$X_P(w) \in \GL_N/P$ is such that there exists a parabolic subgroup
$\tilde{P} \subsetneq P$ such that the birational model 
$X_{\tilde{P}}(\tilde{w}) \subseteq \GL_N/\tilde{P}$ of $X_P(w)$ is
smooth (we say that $X_P(w)$  has a \define{Schubert
desingularization} if this happens) then the inverse image of 
$Y_P(w)$ inside $X_{\tilde{P}}(\tilde{w})$ is a vector-bundle over a
Schubert variety in $P/\tilde{P}$. This will give us a realization of
Diagram~\eqref{equation:KLWdiagram}.

Recall the following result about the tangent space of a Schubert
variety; see~\cite[Chapter~4]{BilleyLakshmibaiSingularLoci2000} for
details.

\begin{proposition}\label{proposition:tgt}
Let $\tau\in W^{\tilde{P}}$. Then the dimension of the tangent space of 
$X_{\tilde{P}}(\tau)$ at $e_{\mathrm{id}}$ is 
\[
\#\{s_{\alpha} \mid \alpha \in R^\mhyphen\setminus R^\mhyphen_{\tilde{P}}
\;\text{and}\; \tau\ge s_{\alpha} \;\text{in}\; W/W_{\tilde{P}} \}.
\] 
In particular, $X_{\tilde{P}}(\tau)$ is smooth if and only if
$\dim X_{\tilde{P}}(\tau) = 
\#\{s_{\alpha} \mid \alpha \in R^\mhyphen\setminus R^\mhyphen_{\tilde{P}}
\;\text{and}\; \tau\ge s_{\alpha} \;\text{in}\; W/W_{\tilde{P}} \}$.
\end{proposition}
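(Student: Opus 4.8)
The plan is to identify the tangent space of $X_{\tilde{P}}(\tau)$ at the distinguished point $e_{\mathrm{id}}$ with a subspace of $T_{e_{\mathrm{id}}}(\GL_N/\tilde{P})$ spanned by (the images of) negative root vectors, and then to determine exactly which root vectors occur. The key tool is the $T$-action: the torus $T$ acts on $\GL_N/\tilde{P}$ fixing $e_{\mathrm{id}}$, preserves $X_{\tilde{P}}(\tau)$, and hence acts on $T_{e_{\mathrm{id}}} X_{\tilde{P}}(\tau)$. The tangent space to the full flag variety $\GL_N/\tilde{P}$ at $e_{\mathrm{id}}$ is $\mathfrak{g}/\mathfrak{p}$, which as a $T$-module decomposes as $\bigoplus_{\alpha \in R^{-}\setminus R^{-}_{\tilde P}} \mathfrak{g}_\alpha$, with each weight space one-dimensional. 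Since $T_{e_{\mathrm{id}}} X_{\tilde{P}}(\tau)$ is a $T$-submodule and all weights are distinct, it is a coordinate subspace: it equals $\bigoplus_{\alpha \in S} \mathfrak{g}_\alpha$ for a uniquely determined subset $S \subseteq R^{-}\setminus R^{-}_{\tilde P}$. So the whole problem reduces to computing $S$, i.e.\ deciding for which $\alpha$ the root vector $\mathfrak{g}_\alpha$ is tangent to $X_{\tilde{P}}(\tau)$.

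Next I would show $\mathfrak{g}_\alpha \in T_{e_{\mathrm{id}}} X_{\tilde{P}}(\tau)$ if and only if $s_\alpha \le \tau$ in $W/W_{\tilde P}$ (equivalently $e_{s_\alpha} \in X_{\tilde{P}}(\tau)$). One direction is the easier one: if $s_\alpha \le \tau$ then the $T$-fixed point $e_{s_\alpha}$ lies in $X_{\tilde{P}}(\tau)$; for a negative root $\alpha$, the curve $t \mapsto u_\alpha(t)\cdot e_{\mathrm{id}}$ (where $u_\alpha$ is the root subgroup) sweeps out, in its closure, exactly the $T$-stable curve joining $e_{\mathrm{id}}$ and $e_{s_\alpha}$ inside the $\SL_2$-orbit attached to $\alpha$; since $e_{s_\alpha}\in X_{\tilde P}(\tau)$ and $X_{\tilde P}(\tau)$ is closed and $B_N$-stable (hence $u_\alpha$-stable when $\alpha\in R^-$ — here one must be slightly careful, this uses that $X_{\tilde P}(\tau)$ is $B_N$-stable and $e_{s_\alpha}, e_{\mathrm{id}}$ are its points, so the connecting curve lies in it), its tangent direction $\mathfrak{g}_\alpha$ is in the tangent space. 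Conversely, if $\mathfrak g_\alpha$ is tangent, one produces the point $e_{s_\alpha}$ in $X_{\tilde P}(\tau)$ and concludes $s_\alpha\le\tau$. The cleanest route for the whole equivalence is to cite the standard description of the tangent cone/tangent space of a Schubert variety in terms of the $T$-fixed points and $T$-stable curves through $e_{\mathrm{id}}$, which is precisely what \cite[Chapter~4]{BilleyLakshmibaiSingularLoci2000} provides; indeed this is why the statement is phrased as a recollection.

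Granting this, $\dim T_{e_{\mathrm{id}}} X_{\tilde{P}}(\tau) = \#\{s_\alpha \mid \alpha\in R^{-}\setminus R^{-}_{\tilde P},\ \tau \ge s_\alpha \text{ in } W/W_{\tilde P}\}$, which is the first assertion. The ``in particular'' is then immediate: a variety is smooth at a point exactly when its tangent space there has dimension equal to (here, the variety being irreducible by Remark~\ref{remark:SchubCMNormaletc}) the dimension of the variety; and $e_{\mathrm{id}}$ is in the closure of every $B_N$-orbit in $X_{\tilde P}(\tau)$, so by $B_N$-homogeneity of the smooth locus's complement (the singular locus is closed and $B_N$-stable, hence contains $e_{\mathrm{id}}$ if nonempty), $X_{\tilde P}(\tau)$ is smooth everywhere iff it is smooth at $e_{\mathrm{id}}$ iff $\dim X_{\tilde P}(\tau)$ equals that cardinality.

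The main obstacle is the careful bookkeeping in the equivalence ``$\mathfrak g_\alpha$ tangent $\iff s_\alpha\le\tau$'' when $\tilde P \ne B_N$: one must pass correctly between roots $\alpha \in R^{-}\setminus R^{-}_{\tilde P}$, the reflections $s_\alpha$, and their classes in $W/W_{\tilde P}$, and verify that the Bruhat-order comparison is well-defined on cosets (it is, because $s_\alpha$ for $\alpha\in R^{-}\setminus R^{-}_{\tilde P}$ has a canonical minimal coset representative and $\tau\in W^{\tilde P}$). Since all of this is exactly the content of the cited reference, I would keep the argument brief and lean on it, presenting the $T$-weight-space decomposition as the conceptual core and citing \cite{BilleyLakshmibaiSingularLoci2000} for the identification of $S$.
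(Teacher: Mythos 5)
The paper supplies no proof of this proposition at all; it is stated as a recollection with a pointer to \cite[Chapter~4]{BilleyLakshmibaiSingularLoci2000}, exactly as you anticipate in your last paragraph. Your sketch captures the structure of the standard argument correctly: the $T$-weight-space decomposition of $T_{e_{\mathrm{id}}}(\GL_N/\tilde P) \cong \mathfrak g/\tilde{\mathfrak p}$ into distinct one-dimensional pieces is the right conceptual core, forcing $T_{e_{\mathrm{id}}}X_{\tilde P}(\tau)$ to be a coordinate subspace; the ``in particular'' then follows from irreducibility of $X_{\tilde P}(\tau)$ together with the observation that the singular locus is closed, $B_N$-stable, and hence contains $e_{\mathrm{id}}$ whenever it is nonempty (since $e_{\mathrm{id}}$ lies in the closure of every Schubert cell). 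And you correctly delegate the genuinely nontrivial inclusion --- that $\mathfrak g_\alpha$ tangent implies $s_\alpha \le \tau$, which is type-dependent and is a theorem of Lakshmibai--Seshadri in Type~A --- to the cited reference, which is the only source of that fact the paper invokes.

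One genuine slip in your ``easy direction'': you assert that $X_{\tilde P}(\tau)$ is $u_\alpha$-stable for $\alpha$ a negative root, ``since it is $B_N$-stable.'' That is false: for $\alpha \in R^-$ the root subgroup $U_\alpha$ lies in $B_N^-$, not $B_N$, and Schubert varieties are not $B_N^-$-stable. Your hedged alternative (``both endpoints lie in $X_{\tilde P}(\tau)$, so the connecting curve does too'') is also not a valid inference on its own. The correct fix is to run the $T$-stable $\mathbb P^1$ from the other end: since $-\alpha \in R^+$, the subgroup $U_{-\alpha}$ sits inside $B_N$, hence $U_{-\alpha}\cdot e_{s_\alpha} \subseteq B_N \cdot e_{s_\alpha} \subseteq X_{\tilde P}(\tau)$ (using $s_\alpha \le \tau$ and $B_N$-stability), and its closure is the same $T$-stable $\mathbb P^1$ through $e_{\mathrm{id}}$ and $e_{s_\alpha}$, with tangent direction $\mathfrak g_\alpha$ at $e_{\mathrm{id}}$. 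With that repair the sketch is sound, and otherwise matches what the cited reference would give.
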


\begin{definition}
\label{definition:linear}
Say that a Schubert variety $X_{\tilde{P}}(\tau)$ \define{has \linearity} if 
$Y_{\tilde{P}}(\tau)$ (which is defined as $X_{\tilde{P}}(\tau) \cap
O^\mhyphen_{G/\tilde{P}}$) is a coordinate subspace of 
$O^\mhyphen_{G/\tilde{P}}$, defined by the vanishing of some of the
variables $x_{ij}$ from Discussion~\ref{discussionbox:Ominus}.
\end{definition}

It is immediate that if $X_{\tilde{P}}(\tau)$ has \linearity then it is smooth.
The converse is not true, as the following example shows. Let $\tau =
(2,4,1,3)$ and consider $X_B(\tau) \subseteq \GL_4/B$. Note that 
$X_B(\tau)$ is smooth. The reflections $(i,j)$ (with $i > j$) that satisfy
$(i,j) \not \leq \tau$ (in $W = S_4$) are precisely $(3,1)$, $(4,1)$ and
$(4,2)$. For these reflections, we note that the relevant restrictions of
the Pl\"ucker coordinates to ${O^\mhyphen_{\GL_4/B}}$ that vanish on
$Y_{\tilde{P}}(\tau)$ are as follows:
$p_{(3,1)}^{(1)}|_{O^\mhyphen_{\GL_4/B}} = x_{31}$,
$p_{(4,1)}^{(1)}|_{O^\mhyphen_{\GL_4/B}} = x_{41}$ and
$p_{(4,2)}^{(3)}|_{O^\mhyphen_{\GL_4/B}} = x_{32}x_{43}-x_{42}$. Hence 
$Y_{\tilde{P}}(\tau)$ is defined by $x_{31} = 0,x_{41} =0, x_{32}x_{43}-x_{42}
= 0$ as a sub variety of  ${O^\mhyphen_{\GL_4/B}}$, showing that
$Y_{\tilde{P}}(\tau)$ is a smooth subvariety of ${O^\mhyphen_{\GL_4/B}}$ but
not a coordinate subspace.

We are interested in the parabolic subgroups 
$\tilde{P} = \tilde{P}_s$ for some $1 \leq s \leq n-1$. 
Take such a $\tilde{P}$. We will show below that certain
smooth Schubert varieties in $\GL_N/\tilde{P}$ have \linearity.
From Discussion~\ref{discussionbox:Ominus} it follows that 
$\{x_{ij}  \mid j \leq n \;\text{and}\; i \geq \max\{j+1,s+1\}\}$
is a system of affine coordinates for $O^\mhyphen_{\GL_N/{\tilde{P}}}$. 

\begin{notationbox}\label{note}
For the remainder of this section
we adopt the following notation: 
Let $w = (a_1, a_2, \ldots, a_n) \in W^P$. 
Let $r = r(w)$, i.e., the index $r$ such that $a_r \leq n < a_{r+1}$.
Let $1 \leq s \leq r$.
We write $\tilde{P} = \tilde{P}_s$.
Let $\tilde{w}$ be the minimal representative of $w$ in $W^{\tilde{P}}$.
Let $c_{r+1} > \cdots > c_n$ be such that
$\{c_{r+1}, \ldots, c_n\} = \{1, \ldots, n\} \setminus \{a_{1}, \ldots,
a_{r}\}$; let $w' := (a_1, \ldots, a_r, c_{r+1}, \ldots, c_n) \in S_n$, the 
Weyl group of $\GL_n$.
\end{notationbox}

\begin{theorem}
\label{theorem:smoothnessAndLinearity}
With notation as above, suppose that the Schubert variety
$X_{\tilde{P}}(\tilde{w})$ of $\GL_N/\tilde{P}$ is smooth. Then it has
\linearity.
\end{theorem}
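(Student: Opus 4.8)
The plan is to reduce the linearity statement on $\GL_N/\tilde P$ to an analogous statement about the ``$\GL_n$-part'' $\GL_n/Q_s$, exploiting the product decomposition from Proposition~\ref{proposition:structureOMinus}. First I would use Proposition~\ref{proposition:structureOMinus}\eqref{proposition:structureOMinusInverseImage}--\eqref{proposition:SLnModQ} to decompose $O^\mhyphen_{\GL_N/\tilde P}$, as a scheme, into $O^\mhyphen_{\GL_N/P} \times O^\mhyphen_{P/\tilde P}$, with the first factor the ``$D$-region'' (an honest $mn$-dimensional affine space with coordinates $x_{ij}$, $j\le n < i\le N$) and the second factor the ``$A$-region'', namely $O^\mhyphen_{\GL_n/Q_s}$ with coordinates $x_{ij}$, $s < i \le n$, $j\le i-1$. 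Under this identification, $Y_{\tilde P}(\tilde w) = X_{\tilde P}(\tilde w)\cap O^\mhyphen_{\GL_N/\tilde P}$; the point is that the vanishing conditions defining $Y_{\tilde P}(\tilde w)$ split into conditions involving only $D$-region variables and conditions involving only $A$-region variables, because the Plücker coordinates $p_{s_\alpha}^{(l)}$ restricted to $O^\mhyphen_{\GL_N/\tilde P}$ are, by Discussion~\ref{discussionbox:Ominus}, minors that are honestly linear (equal to a single $x_{ij}$) when $\alpha$ corresponds to a $D$-region position, and are minors of the $A$-block when $\alpha$ lies in the $A$-region.

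Next I would set up the combinatorial translation. For $\alpha = \epsilon_i - \epsilon_j \in R^\mhyphen\setminus R^\mhyphen_{\tilde P}$, the condition $\tilde w \geq s_\alpha$ in $W/W_{\tilde P}$ is what controls (via Proposition~\ref{proposition:tgt}) which $x_{ij}$ survive in $Y_{\tilde P}(\tilde w)$. Smoothness of $X_{\tilde P}(\tilde w)$ means, by Proposition~\ref{proposition:tgt}, that the number of such reflections $s_\alpha$ equals $\dim X_{\tilde P}(\tilde w) = \ell(\tilde w)$. Using the notation in \ref{note}, I would separate the reflections $s_\alpha$ into those with $j \le n < i$ (the $D$-region, contributing to the tangent space at $e_{\mathrm{id}}$ of the $\GL_N/P$-factor) and those with $s < i \le n$, $j < i$ (the $A$-region, i.e.\ reflections lying in $\GL_n$). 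A short argument using the one-line notation $w = (a_1,\dots,a_n)$ and the Bruhat order on $W/W_P$ shows that the $D$-region part always contributes exactly the ``determinantal-shape'' coordinate subspace (no relations there), and that the $A$-region reflections $s_\alpha \le \tilde w$ are governed by the element $w' \in S_n$ of \ref{note}, i.e.\ correspond to $s_\alpha \le w'$ in $S_n / W_{Q_s}$.

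The crux of the argument — and the step I expect to be the main obstacle — is then to show that smoothness of $X_{\tilde P}(\tilde w)$ forces smoothness of the Schubert variety $X_{Q_s}(w')$ in $\GL_n/Q_s$, and moreover that this latter Schubert variety has linearity, i.e.\ its opposite cell is a coordinate subspace of $O^\mhyphen_{\GL_n/Q_s}$. For the first implication I would compare the two tangent-space counts from Proposition~\ref{proposition:tgt}: the reflections $s_\alpha \le \tilde w$ with $\alpha$ in the $A$-region are in bijection with reflections $\le w'$ in $S_n/W_{Q_s}$, and the lengths satisfy $\ell(\tilde w) = \ell(w') + (\text{the }D\text{-region count})$, so equality in the $\GL_N$-case transfers to equality in the $\GL_n$-case. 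For linearity of $X_{Q_s}(w')$, the key fact I would invoke/establish is that a Grassmannian (or a partial flag variety of the type $\GL_n/Q_s$ where $Q_s = P_{\{1,\dots,s-1\}}$, which omits a single ``cut'' at position $s$ among $\{s,\dots,n-1\}$ — actually $Q_s$ omits all of $\{s,\dots,n-1\}$, so $\GL_n/Q_s$ is a partial flag variety) has the property that a \emph{smooth} Schubert variety in it automatically has linearity; this is essentially the content that in the relevant range the Plücker coordinates cutting out $Y_{Q_s}(w')$ are themselves coordinates $x_{ij}$, which one checks by the explicit minor description of Discussion~\ref{discussionbox:Ominus} together with a smoothness-forces-no-relations argument (if a minor relation among the $x_{ij}$ were needed, the variety would be singular, contradicting the count). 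Finally, reassembling: $Y_{\tilde P}(\tilde w) = (D\text{-region coordinate subspace}) \times Y_{Q_s}(w')$, a product of coordinate subspaces, hence a coordinate subspace of $O^\mhyphen_{\GL_N/\tilde P}$, which is exactly the linearity assertion. I would expect the delicate points to be (i) verifying carefully that the Bruhat-order conditions genuinely decouple along the $D$/$A$ split — this needs the hypothesis $s \le r = r(w)$ from \ref{note} — and (ii) the implication ``smooth $\Rightarrow$ linearity'' in $\GL_n/Q_s$, which is false for general flag varieties (as the $(2,4,1,3)$ example after Definition~\ref{definition:linear} shows) and so must genuinely use the special shape of $Q_s$.
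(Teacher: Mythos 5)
Your proposal has the right raw ingredients somewhere in it, but it is built on a reduction that is not justified and that ultimately is not needed. The paper's proof is direct and uniform: it observes that for each $(i,j)$ with $j \leq n$ and $i \geq \max\{a_j+1, s+1\}$, the $j$th Pl\"ucker coordinate $p_{(i,j)}^{(j)}$ vanishes on $X_{\tilde P}(\tilde w)$ (because $(i,j) \not\leq \tilde w$ in $W/W_{P_{\widehat j}}$, using $a_1 < \cdots < a_n$), and that by equation~\eqref{equation:minorcoordinate} this restriction equals the coordinate $x_{ij}$; then the smoothness hypothesis together with Proposition~\ref{proposition:tgt} shows the number of such $(i,j)$ equals the codimension of $Y_{\tilde P}(\tilde w)$ in $O^\mhyphen_{\GL_N/\tilde P}$, so equality of the two irreducible varieties follows. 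No $D$-region/$A$-region split is invoked.

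There are two genuine gaps in your version. First, the opening step --- asserting that the equations defining $Y_{\tilde P}(\tilde w)$ decouple into ``$D$-only'' and ``$A$-only'' conditions, so that $Y_{\tilde P}(\tilde w)$ is a product --- is not something you can assume. That product structure is in fact a \emph{consequence} of the theorem (it is exactly Corollary~\ref{corollary:iden}, which the paper derives from Theorem~\ref{theorem:smoothnessAndLinearity}), not an a priori decomposition. The ideal of $X_{\tilde P}(\tilde w)$ is generated by all Pl\"ucker coordinates $p_\tau^{(l)}$ with $\tau \not\leq \tilde w$, not just the $p_{s_\alpha}^{(l)}$, and those minors mix $A$- and $D$-variables; only after one knows the codimension count from smoothness does one get to replace this generating set by single variables. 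Second, your ``crux'' step --- that smooth Schubert varieties in $\GL_n/Q_s$ automatically have linearity --- is a statement of precisely the same nature as the theorem being proved, and you do not actually prove it; you only indicate that it would follow from ``Pl\"ucker coordinate equals $x_{ij}$'' plus a dimension count. But that is exactly the argument which, applied once on $\GL_N/\tilde P$ itself, finishes the whole theorem; routing through $\GL_n/Q_s$ adds a layer of unproven claims (decoupling, the transfer of smoothness from $X_{\tilde P}(\tilde w)$ to $X_{Q_s}(w')$) without simplifying anything. I would suggest discarding the $D/A$ factorization and running the Pl\"ucker-vanishing plus tangent-space-count argument directly on $O^\mhyphen_{\GL_N/\tilde P}$.
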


\begin{proof}
Since $a_1 < \cdots < a_n$, we see that for every $j \leq n$ and for every
$i \geq \max\{a_j+1, s+1\}$, the reflection $(i,j)$ equals
$(1,2,\ldots, j-1, i)$ in 
$W/W_{P_{\widehat{j}}}$, while $\tilde{w}$ equals $(a_1, \ldots, a_j)$. 
Hence $(i,j)$ is not smaller than
$\tilde{w}$ in $W/W_{P_{\widehat{j}}}$, so the Pl\"ucker coordinate
$p_{{(i,j)}}^{(j)}$ vanishes on $X_{\tilde{P}}(\tilde w)$. Therefore for such
$(i,j)$, $x_{ij} \equiv 0$ on $Y_{\tilde{P}}(\tilde w)$, by
\eqref{equation:minorcoordinate}. 

On the other hand, note that the reflections 
$(i,j)$ with $j \leq n$ and $i \geq \max\{a_j+1, s+1\}$ are precisely the
reflections $s_{\alpha}$ with 
$\alpha \in R^\mhyphen\setminus R^\mhyphen_{\tilde{P}}$ 
and $\tilde{w}\not\ge s_{\alpha}$ in $W/W_{\tilde{P}}$.
Since $X_{\tilde{P}}(\tilde w)$ is smooth, this implies
(see Proposition~\ref{proposition:tgt}) 
that the codimension of 
$Y_{\tilde{P}}(\tilde w)$ in $O^\mhyphen_{\GL_N/{\tilde{P}}}$ equals 
\[
\#\left\{(i,j) \mid j \leq n \;\text{and}\; i \geq \max\{a_j+1, s+1\}\right\}
\]
so 
$Y_{\tilde{P}}(\tilde w)$ is the linear subspace of
$O^\mhyphen_{\GL_N/{\tilde{P}}}$ defined by the vanishing of 
$\{x_{ij} \mid j \leq n \;\text{and}\; i \geq \max\{a_j+1, s+1\}\}$.
\end{proof}

We have the following immediate corollary to the proof of
Theorem~\ref{theorem:smoothnessAndLinearity}.
See Figure~\ref{figure:Ominus} for a picture.

\begin{corollary}\label{corollary:iden}
Suppose that $X_{\tilde{P}}(\tilde w)$ is smooth, and identify 
$O^-_{G/{\tilde{P}}}$ with $O^-_{G/P}\times O^-_{P/{\tilde{P}}}$ (as in Figure \ref{figure:Ominus}).
Then we have an identification of
$Y_{\tilde{P}}(\tilde w)$ with
$\calV_w \times \calV'_w$, where $\calV_w$ is the linear subspace of  
$O^-_{G/{P}}$ (note that $O^-_{G/{P}}$ is identified with $M_{m,n}$, the space of all $m\times n$ matrices)
given by 
\[
x_{ij} = 0 \;\text{if}\;
\begin{cases}
1 \leq j \leq r(w) \;\text{and for every}\; i, \\
or, \\
r(w)+1 \leq j \leq n-1 \;\text{and}\; a_j -n < i \leq m. \\
\end{cases}
\]
and $\calV'_w$ is the linear subspace 
of $O^-_{P/{\tilde{P}}}$ (being identified with $M_{m,n}$, the space of all $m\times n$ matrices)given by 
\[
x_{ij}=0 \;\text{for every}\;  1\le j\le r(w) \;\text{and for every}\; i \geq 
\max \{a_j+1, s+1\}.
\]
\end{corollary}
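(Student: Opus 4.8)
The plan is to deduce Corollary~\ref{corollary:iden} as a bookkeeping reorganization of the proof of Theorem~\ref{theorem:smoothnessAndLinearity}. That theorem already tells us that $Y_{\tilde P}(\tilde w)$ is the coordinate subspace of $O^-_{\GL_N/\tilde P}$ cut out by $\{x_{ij} \mid j \leq n,\ i \geq \max\{a_j+1,s+1\}\}$. So the entire content of the corollary is to sort this index set according to the product decomposition $O^-_{\GL_N/\tilde P} \cong O^-_{\GL_N/P} \times O^-_{P/\tilde P}$ recorded in Proposition~\ref{proposition:structureOMinus}\eqref{proposition:structureOMinusInverseImage} and pictured in Figure~\ref{figure:Ominus}: the coordinates $x_{ij}$ with $i > n$ (rows below the horizontal line, region $D$) form $O^-_{\GL_N/P}$, and those with $i \leq n$ (region $A$ of the trapezoid, where $s < i \leq n$ and $j < i$) form $O^-_{P/\tilde P}$.

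First I would treat the factor $\calV'_w \subseteq O^-_{P/\tilde P}$. Here the running coordinates are the $x_{ij}$ with $j \leq n$, $s+1 \leq i \leq n$, and $j < i$. Among the vanishing conditions $i \geq \max\{a_j+1,s+1\}$ from the theorem, those with $i \leq n$ are exactly the ones with $1 \leq j \leq r(w)$ (because for $j > r(w)$ we have $a_j > n$, so $\max\{a_j+1,s+1\} = a_j+1 > n$ and no such $x_{ij}$ lies in region $A$; whereas for $j \leq r(w)$, $a_j \leq n$). This gives precisely the description $x_{ij} = 0$ for $1 \leq j \leq r(w)$ and $i \geq \max\{a_j+1,s+1\}$, i.e. $\calV'_w$ as stated.

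Next I would treat the factor $\calV_w \subseteq O^-_{\GL_N/P} = M_{m,n}$. Under the standard identification the matrix entry in position $(a,b)$ of an $m \times n$ matrix corresponds to $x_{ij}$ with $i = n + a$ (so $1 \leq a \leq m$) and $j = b$ (so $1 \leq b \leq n$); the constraint $j < i$ is automatic here. For these coordinates $i > n \geq s$, so $\max\{a_j+1,s+1\} = a_j+1$ when $a_j \geq s$, and in any case the condition "$i \geq \max\{a_j+1,s+1\}$" becomes "$i \geq a_j+1$" whenever $a_j \geq n$ (i.e. $j > r(w)$) is false — wait, more carefully: for $j \leq r(w)$ we have $a_j \leq n < i$, so $i \geq a_j+1$ holds automatically and all of column $j$ vanishes; for $j \geq r(w)+1$ we have $a_j \geq n+1$, and $i \geq a_j+1$ translates, via $i = n+a$, to $a \geq a_j - n + 1$, i.e. $a_j - n < a \leq m$. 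Rewriting in the $x_{ij}$ notation of the corollary (where the paper reuses $x_{ij}$ for the $M_{m,n}$ coordinates with $1 \leq i \leq m$) gives exactly the two displayed cases defining $\calV_w$. Assembling the two factors and invoking that $Y_{\tilde P}(\tilde w)$ is the product of its intersections with the two factors (since the defining linear equations separate into the two coordinate blocks) completes the proof.

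I do not expect any genuine obstacle: the only thing requiring care is the triple bookkeeping of indices — the global coordinates $x_{ij}$ on $O^-_{\GL_N/\tilde P}$ (with $i$ up to $N$), versus the $M_{m,n}$-coordinates on the $P$-factor (with $i$ shifted down by $n$), versus the trapezoidal region $A$ for the $P/\tilde P$-factor — together with keeping straight the case split at $j = r(w)$ coming from whether $a_j \leq n$ or $a_j > n$. Once the dictionary between these index conventions is pinned down and Figure~\ref{figure:Ominus} is used to see which $(i,j)$ sit in which region, the corollary is a direct rewriting of the conclusion of Theorem~\ref{theorem:smoothnessAndLinearity}, so the proof is short.
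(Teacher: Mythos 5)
Your proposal is correct and matches the paper's own argument: the paper's proof likewise simply cites Theorem~\ref{theorem:smoothnessAndLinearity} for the vanishing set $\{x_{ij} : j \le n,\ i \ge \max\{a_j+1,s+1\}\}$ and then invokes the identification $O^-_{G/\tilde P} \cong O^-_{G/P}\times O^-_{P/\tilde P}$, leaving the index-sorting you carried out to the reader. Your explicit bookkeeping (splitting at $i>n$ versus $s<i\le n$, the case split at $j=r(w)$ via $a_j\lessgtr n$, and the shift $i\mapsto i-n$ for the $M_{m,n}$ coordinates) is exactly what the paper's terse proof compresses.
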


\begin{proof} 
As seen in the proof of Theorem~\ref{theorem:smoothnessAndLinearity}, we
have  that $Y_{\tilde{P}}(\tilde w)$ is the subspace of the affine space
$O^-_{G/{\tilde{P}}}$ given by
$x_{ij}=0$ for every $j \leq n$ and for every $i \geq \max\{a_j+1,s+1\}$.
This fact together with the identification of $O^-_{G/{\tilde{P}}}$ with
$O^-_{G/P}\times O^-_{P/{\tilde{P}}}$, implies that
we have an identification of $Y_{\tilde{P}}(\tilde w)$ (as a sub variety of
$O^-_{G/P}\times O^-_{P/{\tilde{P}}}$) with
$\calV_w \times \calV'_w$.
\end{proof}

Let $Z_{\tilde{P}}(\tilde w) := 
Y_P(w) \times_{X_P(w)} X_{\tilde{P}}(\tilde w)
= (O^\mhyphen_{\GL_N/P} \times P/{\tilde{P}}) \cap X_{{\tilde P}}(\tilde
w)$.

Write $p$ for the composite
map $Z_{\tilde{P}}(\tilde w) \to O^\mhyphen_{\GL_N/P} \times P/{\tilde{P}}
\to P/{\tilde{P}}$, where the first map is the inclusion (as a closed
subvariety)
and the second map is projection.
Using Proposition~\ref{proposition:structureOMinus}%
\eqref{proposition:structureOMinusInverseImage}
and~\eqref{proposition:SLnModQ} we see that 
\[
p \left(
{\begin{bmatrix}
A_{n \times n} & 0_{n \times m} \\
D_{m \times n} & I_{m}
\end{bmatrix}} (\mathrm{mod}\, {\tilde{P}})\right)
=A (\mathrm{mod}\,Q_s).
\]
($A$ is invertible by
Proposition~\ref{proposition:structureOMinus}%
\eqref{proposition:structureOMinusinvertibleA}.)

Using the injective map
\[
A \in B_n \mapsto {\begin{bmatrix}
A & 0_{n \times m} \\
0_{m \times n} & I_{m}
\end{bmatrix}} \in B_N,
\]
$B_n$ can be thought of as a subgroup of $B_N$. 
With this identification,  we have the following Proposition:
\begin{proposition}
\label{proposition:stable} 
$Z_{\tilde P}(\tilde w)$ is $B_n$-stable (for
the action on the left by multiplication). Further, $p$ is
$B_n$-equivariant.
\end{proposition}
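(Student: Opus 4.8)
The plan is to verify the two assertions separately, in both cases reducing to the explicit block-matrix description of $O^\mhyphen_{\GL_N/P}\times P/{\tilde{P}}$ from Proposition~\ref{proposition:structureOMinus}. Throughout, for $A_0\in B_n$ write $b=\begin{bmatrix}A_0 & 0_{n\times m}\\ 0_{m\times n}& I_m\end{bmatrix}\in B_N$ for the corresponding element under the embedding $B_n\hookrightarrow B_N$ fixed just above the statement; note $b$ lies in the Levi subgroup $L_P$ of $P$. For the stability, write $Z_{\tilde{P}}(\tilde w)=(O^\mhyphen_{\GL_N/P}\times P/{\tilde{P}})\cap X_{\tilde{P}}(\tilde w)$ and check that each of the two sets in this intersection is $B_n$-stable. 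The Schubert variety $X_{\tilde{P}}(\tilde w)$ is by definition the closure of a $B_N$-orbit, hence $B_N$-stable and a fortiori $B_n$-stable. Since $O^\mhyphen_{\GL_N/P}\times P/{\tilde{P}}$ is the inverse image of $O^\mhyphen_{\GL_N/P}$ under the $\GL_N$-equivariant projection $\GL_N/{\tilde{P}}\to \GL_N/P$, it suffices to know $O^\mhyphen_{\GL_N/P}$ is $B_n$-stable; and by Proposition~\ref{proposition:structureOMinus}\eqref{proposition:structureOMinusinvertibleA}, $zP\in O^\mhyphen_{\GL_N/P}$ if and only if the top-left $n\times n$ block of $z$ is invertible, a condition preserved by left multiplication by $b$ (which multiplies that block on the left by $A_0\in\GL_n$). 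Thus $Z_{\tilde{P}}(\tilde w)$, an intersection of two $B_n$-stable subsets, is $B_n$-stable.

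For the equivariance of $p$, I would use the normal form of Proposition~\ref{proposition:structureOMinus}\eqref{proposition:structureOMinusInverseImage}: a point of $O^\mhyphen_{\GL_N/P}\times P/{\tilde{P}}$ is represented in the form $\begin{bmatrix}A_{n\times n} & 0\\ D & I_m\end{bmatrix}\bmod{\tilde{P}}$ with $A$ invertible (the pair $(A,D)$ determined up to $(Aq,Dq)$, $q\in Q_s$), and by Proposition~\ref{proposition:structureOMinus}\eqref{proposition:SLnModQ}, $p$ sends it to the well-defined class $A\bmod Q_s$ under the identification $P/{\tilde{P}}\cong \GL_n/Q_s$ coming from $\begin{bmatrix}A & C\\ 0 & E\end{bmatrix}\mapsto A$. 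Left multiplication by $b$ carries this representative to $\begin{bmatrix}A_0A & 0\\ D & I_m\end{bmatrix}\bmod{\tilde{P}}$, again in normal form (top-left block $A_0A$ still invertible), so $p$ sends it to $A_0A\bmod Q_s=A_0\cdot(A\bmod Q_s)$. The latter is precisely $b$ acting on $p$ of the original point, where $B_n$ acts on $P/{\tilde{P}}\cong\GL_n/Q_s$ by left translation through $B_n\subseteq\GL_n$. Restricting this computation from $O^\mhyphen_{\GL_N/P}\times P/{\tilde{P}}$ to the $B_n$-stable subvariety $Z_{\tilde{P}}(\tilde w)$ yields the $B_n$-equivariance of $p$.

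I do not anticipate a genuine obstacle: the whole argument is bookkeeping with the normal forms of Proposition~\ref{proposition:structureOMinus}. The only point deserving care is that the identifications in play --- of the inverse image of $O^\mhyphen_{\GL_N/P}$ with $O^\mhyphen_{\GL_N/P}\times P/{\tilde{P}}$, and of $P/{\tilde{P}}$ with $\GL_n/Q_s$ --- be used consistently, so that the $B_n$-action transported to the product is the one under which $p$ is the projection onto the second factor; the normal-form computation above is exactly that verification. Alternatively, one can argue group-theoretically: since $b\in L_P$ normalizes the negative unipotent radical $U^\mhyphen_P$, left multiplication by $b$ preserves the (unique, by the proof of Proposition~\ref{proposition:structureOMinus}\eqref{proposition:structureOMinusInverseImage}) factorization $U^\mhyphen_P\cdot P$ and acts on the $P$-factor by left translation, which is the claimed equivariance.
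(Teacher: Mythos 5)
Your proposal is correct and follows essentially the same approach as the paper: both arguments split the stability check into showing that $X_{\tilde P}(\tilde w)$ is $B_N$-stable (you say it directly as the closure of a $B_N$-orbit; the paper lifts to $\overline{B_N\tilde wB_N}$, which amounts to the same thing) and that the opposite-cell part is preserved because left multiplication by $b$ keeps the top-left $n\times n$ block invertible, and both establish equivariance by the explicit normal-form computation $p(b\cdot z)=A_0A\bmod Q_s=A_0\cdot p(z)$.
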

\begin{proof} Let
\[
z := {\begin{bmatrix}
A_{n \times n} & 0_{n \times m} \\
D_{m \times n} & I_{m}
\end{bmatrix}} \in \GL_N
\]
be such that $z \tilde{P} \in Z_{\tilde P}(\tilde w)$. Since
$X_{B_N}(\tilde{w}) \to X_{\tilde P}(\tilde{w})$ is surjective, we may
assume
that $z(\mathrm{mod}\, B_N) \in X_{B_N}(\tilde w)$, i.e., 
$z \in \overline{B_N \tilde{w} B_N}$.
Then for every $A' \in B_n$
\[
{\begin{bmatrix}
A' & 0_{n \times m} \\
0_{m \times n} & I_{m}
\end{bmatrix}}z = {\begin{bmatrix}
A'A & 0 \\
D & I_{m}
\end{bmatrix}} =: z'.
\]
Then $z' \in \overline{B_N \tilde{w} B_N}$, so
$z' (\mathrm{mod}\, \tilde{P}) \in X_{\tilde P}(\tilde w)$. 
By
Proposition~\ref{proposition:structureOMinus}\eqref%
{proposition:structureOMinusinvertibleA}, we have that $A$ is invertible,
and hence $AA'$ is invertible;  this implies (again by
Proposition~\ref{proposition:structureOMinus}\eqref%
{proposition:structureOMinusinvertibleA}
) that 
$z' (\mathrm{mod}\, \tilde{P}) \in Z_{\tilde P}(\tilde w)$. 
Thus $Z_{\tilde P}(\tilde w)$ is $B_n$-stable.
Also,  $p(A'z)=p(z') = A'A = A'p(z)$. Hence $p$ is $B_n$-equivariant.
\end{proof}

\begin{theorem}
\label{theorem:birational}
With notation as above,
\begin{enumerate}

\item \label{enum:birationalMap}
The natural map $X_{\tilde P}({\tilde{w}}) \to X_P(w)$
is proper and birational. In particular, the map $Z_{\tilde{P}}(\tilde w)
\to Y_P(w)$ is proper and birational.

\item \label{enum:birationalFibre}
$X_{Q_s}(w')$ is the fibre of the natural map
$Z_{\tilde P}({\tilde{w}}) \to Y_P(w)$ at $e_{id} \in  Y_P(w)$  ($w'$ being as in Notation \ref{note}).

\item \label{enum:birationalFibration}
Suppose that $X_{\tilde P}({\tilde{w}})$ is smooth. Then
$X_{Q_s}(w')$ is the image of $p$. Further, $p$ is a fibration with fibre
isomorphic to $\calV_w$.

\item \label{enum:birationalVB}
Let $X_{\tilde P}({\tilde{w}})$ be smooth. Then $p$ identifies
$Z_{\tilde{P}}(\tilde w)$ as a sub-bundle of  the trivial bundle
$O^\mhyphen_{\GL_N/P} 
\times X_{Q_s}(w')$, which arises as the
restriction of the vector-bundle on $\GL_n/Q_s$ associated to the
$Q_s$-module $\calV_w$ (which, in turn, is a $Q_s$-submodule of
$O^\mhyphen_{\GL_N/P}$).
\end{enumerate}
\end{theorem}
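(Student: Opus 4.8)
The plan is to establish the four parts essentially in the order stated, since each feeds into the next, and then isolate the vector-bundle structure as the payoff of the first three. For part~\eqref{enum:birationalMap}: the natural projection $\GL_N/\tilde P \to \GL_N/P$ is proper (it is a fibration with fibre $P/\tilde P$, which is complete), hence its restriction to the closed subvariety $X_{\tilde P}(\tilde w)$ is proper, and its image, being a closed $B_N$-stable subset containing $e_w$, is exactly $X_P(w)$. Birationality is the statement that $\tilde w$ is the minimal representative of $w\tilde P$ in $W^{\tilde P}$ and $w$ the minimal representative of $wP$ in $W^P$, so the two Schubert cells $B_N e_{\tilde w}$ and $B_N e_w$ have the same dimension $\ell(\tilde w) = \ell(w)$ and the map is an isomorphism on this dense open cell; I would cite Remark~\ref{remark:SchubCMNormaletc} for the general fact. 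The ``in particular'' follows by base change along $Y_P(w) \hookrightarrow X_P(w)$, since properness and birationality are preserved under base change (birationality because $Y_P(w)$ is dense-open in $X_P(w)$ and contains $e_{id}$).

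For part~\eqref{enum:birationalFibre}: by Proposition~\ref{proposition:structureOMinus}\eqref{proposition:structureOMinusInverseImage} and~\eqref{proposition:SLnModQ}, the fibre of $Z_{\tilde P}(\tilde w) \to Y_P(w)$ over $e_{id}$ is identified with $\{A \bmod Q_s : \begin{bmatrix} A & 0 \\ 0 & I_m\end{bmatrix} \tilde P \in X_{\tilde P}(\tilde w)\} \subseteq \GL_n/Q_s$. One must check this set is exactly the Schubert variety $X_{Q_s}(w')$. The containment of the open cell is a direct Bruhat-order computation: the minimal representative in $W^{Q_s}$ of the coset of $\tilde w$ under the right $W_{Q_s}$-action is computed by sorting the first $n$ entries of $\tilde w$ appropriately, and this yields exactly $w' = (a_1,\dots,a_r,c_{r+1},\dots,c_n)$ by the definition of the $c_j$'s in Notation~\ref{note}; then one uses that the fibre is closed and $B_n$-stable (this is implicit in Proposition~\ref{proposition:stable}) to conclude it is the full Schubert variety. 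I expect this combinatorial identification of $w'$ with the correct minimal coset representative to be the main obstacle — it requires care with which $W_P$-orbit and which flag variety one is in, and with the fact that $a_{r+1},\dots,a_n > n$ play no role in the $\GL_n$-picture.

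For part~\eqref{enum:birationalFibration}: when $X_{\tilde P}(\tilde w)$ is smooth, Corollary~\ref{corollary:iden} gives the explicit product description $Y_{\tilde P}(\tilde w) = \calV_w \times \calV'_w$ inside $O^-_{G/P} \times O^-_{P/\tilde P}$. Composing the inclusion $Z_{\tilde P}(\tilde w) = Y_{\tilde P}(\tilde w) \hookrightarrow O^-_{G/\tilde P}$ with projection to $O^-_{P/\tilde P}$ and then (via Proposition~\ref{proposition:structureOMinus}\eqref{proposition:SLnModQ}) to $\GL_n/Q_s$, the image of $p$ is the image of $\calV'_w$ under $O^-_{P/\tilde P} \hookrightarrow \GL_n/Q_s$, which is the opposite cell of $X_{Q_s}(w')$; combined with part~\eqref{enum:birationalFibre} and $B_n$-equivariance (Proposition~\ref{proposition:stable}), since the image is $B_n$-stable, closed, and meets the opposite big cell in $\calV'_w = Y_{Q_s}(w')$, it must be all of $X_{Q_s}(w')$. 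That $p$ is a fibration with fibre $\calV_w$ is read directly off the product decomposition of Corollary~\ref{corollary:iden}: over the opposite cell $Y_{Q_s}(w')$ the fibre is literally the constant $\calV_w$, and then $B_n$-equivariance of $p$ (Proposition~\ref{proposition:stable}) spreads local triviality over all of $X_{Q_s}(w')$ by translating.

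For part~\eqref{enum:birationalVB}: the fibration $p$ is $B_n$-equivariant with linear fibre $\calV_w$, and the fibre over $e_{id}$ carries the action of the stabilizer $Q_s$ of $e_{id}$ in $\GL_n/Q_s$ — this action is precisely the $Q_s$-module structure on $\calV_w \subseteq M_{m,n} = O^-_{G/P}$ coming from the right-multiplication action $A \mapsto Aq$ described in Proposition~\ref{proposition:structureOMinus}\eqref{proposition:structureOMinusInverseImage} (one checks $\calV_w$, cut out by the vanishing conditions of Corollary~\ref{corollary:iden}, is stable under this right action of $Q_s$, using that $Q_s$ is block-upper-triangular with blocks of sizes matching the positions $a_1 < \cdots < a_r$). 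Therefore $Z_{\tilde P}(\tilde w) \to X_{Q_s}(w')$ is the restriction to $X_{Q_s}(w') \subseteq \GL_n/Q_s$ of the associated bundle $\GL_n \times^{Q_s} \calV_w$, and the embedding into $O^-_{G/P} \times X_{Q_s}(w')$ realizes it as a sub-bundle of the trivial bundle with fibre $O^-_{G/P}$, since $\calV_w \subseteq O^-_{G/P}$ as $Q_s$-modules and the embedding is the one induced by this inclusion fibrewise. The only thing to verify carefully here is $Q_s$-stability of $\calV_w$, which is a direct inspection of the shape of $\calV_w$ against the block structure of $Q_s$; everything else is formal from the definition of an associated bundle (Definition~\ref{definition:assbundle}) and the $B_n$-equivariance already established.
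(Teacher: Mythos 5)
Your proof of parts \eqref{enum:birationalMap}, \eqref{enum:birationalFibration}, and \eqref{enum:birationalVB} is correct and essentially identical to the paper's: for~\eqref{enum:birationalMap} both you and the paper combine properness of $\GL_N/\tilde P\to\GL_N/P$ with the minimal-representative fact from Remark~\ref{remark:SchubCMNormaletc}; for~\eqref{enum:birationalFibration} both read the image and fibre off the product decomposition $Y_{\tilde P}(\tilde w)=\calV_w\times\calV'_w$ of Corollary~\ref{corollary:iden} and then spread over $X_{Q_s}(w')$ using the $B_n$-equivariance of $p$ (Proposition~\ref{proposition:stable}); and for~\eqref{enum:birationalVB} both check $Q_s$-stability of $\calV_w$ under right multiplication and invoke the associated-bundle construction (the paper is a bit more explicit, writing down the map $\phi$ and producing the isomorphism $\psi$ via the universal property plus a dimension count, whereas you argue more conceptually, but the content is the same).

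Where you genuinely depart from the paper is part~\eqref{enum:birationalFibre}, and this is also where your argument has a real gap. The paper's route is short: it invokes Corollary~\ref{corollary:iden} to write the fibre of $Y_{\tilde P}(\tilde w)\to Y_P(w)$ over $e_{\mathrm{id}}$ as $\{0\}\times\calV'_w$, then passes to closures, using that $Z_{\tilde P}(\tilde w)$ is the closure of $Y_{\tilde P}(\tilde w)$ in $O^\mhyphen_{\GL_N/P}\times P/\tilde P$ and $X_{Q_s}(w')$ is the closure of $\calV'_w=Y_{Q_s}(w')$ in $P/\tilde P$. You instead propose a direct Bruhat-order computation identifying $w'$ as the minimal representative of the appropriate coset, followed by a $B_n$-stability argument — but you leave the key combinatorial identification unproved, and as you concede yourself, this is precisely the hard step; one must carefully pass from the $W=S_N$ picture ($\tilde w$, $W_{\tilde P}$) to the $S_n$ picture ($w'$, $W_{Q_s}$) and then show the fibre, a priori some union of Bruhat cells of $P/\tilde P$ lying under $\tilde w$, is exactly the single Schubert variety $X_{Q_s}(w')$ and not something larger. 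So your~\eqref{enum:birationalFibre} as written is incomplete. That said, your instinct to seek a direct argument is not unreasonable: the paper's own proof of~\eqref{enum:birationalFibre} passes through Corollary~\ref{corollary:iden}, which is stated only under the hypothesis that $X_{\tilde P}(\tilde w)$ is smooth, even though~\eqref{enum:birationalFibre} carries no such hypothesis in the theorem statement (the authors flag this themselves in the remark that they ``believe all the assertions hold without the smoothness hypothesis''). A completed combinatorial proof along your lines would remove that dependence; as it stands, you have identified the right target but not closed the gap.
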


We believe that all the assertions above hold without the hypothesis that 
$X_{\tilde P}({\tilde{w}})$ is smooth.

\begin{proof}
\eqref{enum:birationalMap}: The map $X_{\tilde{P}}(\tilde w) 
\hookrightarrow \GL_N/{\tilde{P}} \to
\GL_N/P$ is proper and its (scheme-theoretic) image is $X_P(w)$; hence
$X_{\tilde{P}}(\tilde w) \to X_P(w)$ is proper. Birationality follows 
from
the fact that $\tilde w$ is the minimal representative of the coset
$w\tilde P$ (see Remark~\ref{remark:SchubCMNormaletc}).

\eqref{enum:birationalFibre}: 
The fibre at $e_{\mathrm{id}} \in  Y_P(w)$ of the map 
$Y_{\tilde{P}}(\tilde w) \to Y_P(w)$ is 
$\{0\} \times \calV'_w (\subseteq \calV_w \times \calV'_w =
Y_{\tilde{P}}(\tilde w))$.
Since $Z_{\tilde{P}}(\tilde w)$ is the closure of $Y_{\tilde{P}}(\tilde w)$
inside $O^\mhyphen_{\GL_N/P} \times P/\tilde{P}$
and $X_{Q_s}(w')$ is the closure of $\calV'_w$ inside $P/\tilde{P}$ (note
that as a sub variety of $O^\mhyphen_{P/\tilde{P}}$, 
$Y_{Q_s}(w')$ is identified with   $\calV'_w$ ), we see
that fibre of $Z_{\tilde P}({\tilde{w}}) \to Y_P(w)$ at $e_{id} \in
Y_P(w)$ is $X_{Q_s}(w')$.

\eqref{enum:birationalFibration} From
Theorem~\ref{theorem:smoothnessAndLinearity} it follows that
\[
Y_{\tilde{P}}(\tilde w) = 
\left\{
\begin{bmatrix} A_{n \times n} & 0_{n \times m} \\ D_{m \times n} & I_m
\end{bmatrix}\mod \tilde P \mid A \in \calV'_w \;\text{and}\;
D \in \calV_w \right\}.
\]
Hence $p(Y_{\tilde{P}}(\tilde w)) = 
\calV'_w \subseteq {X_{Q_s}(w')}$.
Since $Y_{\tilde{P}}(\tilde w)$ is dense inside
$Z_{\tilde{P}}(\tilde w)$ and 
${X_{Q_s}(w')}$ is closed in $\GL_n/Q_s$, 
we see that $p(Z_{\tilde{P}_r}(\tilde w)) \subseteq {X_{Q_s}(w')}$. The
other inclusion  $ {X_{Q_s}(w')}\subseteq p(Z_{\tilde{P}_r}(\tilde w))$
follows from~\eqref{enum:birationalFibre}. Hence, $p(Z_{\tilde{P}_r}(\tilde
w))$  equals ${X_{Q_s}(w')}$.

Next, to prove the second assertion in  \eqref{enum:birationalFibration},
we shall show that for every $A\in \GL_{n}$ with $A \mod Q_s \in
X_{Q_s}(w')$,
\begin{equation}
\label{equation:fibreOfp}
p^{-1}(A \mod Q_s) = \left\{
{\begin{bmatrix}
A & 0_{n \times m} \\
D & I_{m}
\end{bmatrix}} \mod \tilde P: D \in \calV_w \right\}.
\end{equation}

Towards proving this, we first observe that $p^{-1}(e_{id})$ equals
$\calV_w$ (in view of Corollary~\ref{corollary:iden}). Next, we observe
that every $B_n$-orbit inside ${X_{Q_s}(w')}$ meets
$\calV'_w(=Y_{Q_s}(w'))$; further, $p$ is $B_n$-equivariant (see
Proposition~\ref{proposition:stable}). The
assertion~\eqref{equation:fibreOfp} now follows. 
 
\eqref{enum:birationalVB}: First observe that for the action of right
multiplication by $GL_n$ on $\mathcal{O}^-_{G/P}$ (being identified with
$M_{m,n}$, the space of $m\times n$ matrices), $\calV_w$ is stable; we thus
get the homogeneous bundle $\GL_{n}\times^{Q_s} \calV_w\rightarrow
\GL_n/Q_s$ (Definition~\ref{definition:assbundle}). Now to prove the
assertion about
$Z_{\tilde{P}_s}(\tilde w))$ being a
vector-bundle over ${X_{Q_s}(w')}$, we will show that there is a 
commutative
diagram given as below, with $\psi$ an isomorphism:
\[
\xymatrix{%
Z_{\tilde{P}_s}(\tilde w) \ar[rrrd]^{\phi} \ar[rrdd]_p \ar[rrd]_\psi\\
&&(\GL_{n}\times^{Q_s} \calV_w)|_{X_{Q_s}(w')} \ar[d] \ar[r]&  
\GL_{n}\times^{Q_s} \calV_w \ar[d]^\alpha \\
&&{X_{Q_s}(w')} \ar[r]^\beta& \GL_n/Q_s
}
\]
The map $\alpha$ is the homogeneous bundle map
and $\beta$ is the inclusion map.  
Define $\phi$ by \[
\phi :  {{\begin{bmatrix}
A & 0_{n \times m} \\
D & I_{m}
\end{bmatrix}} \mod \tilde P}
 \longmapsto (A,D)/\sim.
\]
Using Proposition~\ref{proposition:structureOMinus}\eqref%
{proposition:structureOMinusInverseImage} and
\eqref{equation:fibreOfp}, we conclude the following:
$\phi$ is well-defined and injective;
$\beta \cdot p = \alpha \cdot \phi$; hence, by the universal property of 
products, the map $\psi$ exists; and, finally, the injective map $\psi$ is
in fact an isomorphism (by dimension considerations).
\end{proof}

\begin{corollary}
\label{corollary:ourRealization}
If $X_{\tilde{P}}(\tilde{w})$ is smooth, then we have the following
realization of the diagram in~\eqref{equation:KLWdiagram}:
\[
\vcenter{\vbox{%
\xymatrix{%
Z_{\tilde P}(\tilde w) \ar[d]^{q'} \xyhookrightarrow[r] &
O^\mhyphen_{\GL_N/P} \times 
X_{Q_s}(w')  \ar[d]^{q} \ar[r] & X_{Q_s}(w')\\
Y_{P}(w)  \xyhookrightarrow[r] & O^\mhyphen_{\GL_N/P}
}}}.
\]
\end{corollary}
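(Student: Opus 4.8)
The plan is to assemble the diagram in~\eqref{equation:KLWdiagram} directly out of the pieces already constructed in Theorem~\ref{theorem:birational}, so that the corollary becomes essentially a matter of recording what has been proved. First I would take $V = X_{Q_s}(w')$, which by Remark~\ref{remark:SchubCMNormaletc} is an irreducible projective variety, and which is smooth because $X_{\tilde P}(\tilde w)$ is assumed smooth (the fibre bundle description in Theorem~\ref{theorem:birational}\eqref{enum:birationalVB} exhibits $X_{\tilde P}(\tilde w)$, hence $Z_{\tilde P}(\tilde w)$, as a vector bundle over $X_{Q_s}(w')$, and a vector bundle over a variety is smooth only if the base is). Next I would invoke Proposition~\ref{proposition:structureOMinus}\eqref{proposition:structureOMinusInverseImage} to identify the inverse image of $O^\mhyphen_{\GL_N/P}$ in $\GL_N/\tilde P$ with $O^\mhyphen_{\GL_N/P} \times P/\tilde P$, and then restrict to the closed subscheme $X_{Q_s}(w') \subseteq P/\tilde P \cong \GL_n/Q_s$ (using Proposition~\ref{proposition:structureOMinus}\eqref{proposition:SLnModQ}); this gives the ambient space $O^\mhyphen_{\GL_N/P} \times X_{Q_s}(w')$ of the top row, with $q$ the first projection onto the affine space $\bbA = O^\mhyphen_{\GL_N/P}$.

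Then I would place $Z_{\tilde P}(\tilde w) := (O^\mhyphen_{\GL_N/P} \times P/\tilde P) \cap X_{\tilde P}(\tilde w)$ inside $O^\mhyphen_{\GL_N/P} \times X_{Q_s}(w')$: the inclusion factors through this product precisely because, by Theorem~\ref{theorem:birational}\eqref{enum:birationalFibration}, the image of $Z_{\tilde P}(\tilde w)$ under $p\colon Z_{\tilde P}(\tilde w) \to P/\tilde P$ is exactly $X_{Q_s}(w')$. Theorem~\ref{theorem:birational}\eqref{enum:birationalVB} then says precisely that this inclusion realizes $Z_{\tilde P}(\tilde w)$ as a sub-bundle (over $X_{Q_s}(w')$) of the trivial bundle $O^\mhyphen_{\GL_N/P} \times X_{Q_s}(w')$, namely the restriction of the homogeneous bundle $\GL_n \times^{Q_s} \calV_w$. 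The left-hand vertical map $q'\colon Z_{\tilde P}(\tilde w) \to Y_P(w)$ is the one induced by the first projection, and it is proper and birational by Theorem~\ref{theorem:birational}\eqref{enum:birationalMap}. Commutativity of the square is immediate: both composites $Z_{\tilde P}(\tilde w) \to O^\mhyphen_{\GL_N/P}$ are the restriction of the first projection of $O^\mhyphen_{\GL_N/P} \times P/\tilde P$, which carries $Z_{\tilde P}(\tilde w)$ into $Y_P(w)$ by the definition $Y_P(w) = X_P(w) \cap O^\mhyphen_{\GL_N/P}$ together with Proposition~\ref{proposition:structureOMinus}\eqref{proposition:structureOMinusInverseImage}.

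I expect there to be essentially no serious obstacle here, since all the substantive content — the birationality and properness of $q'$, the identification of the image of $p$, and the sub-bundle structure — has already been established in Theorem~\ref{theorem:birational}. The only point requiring a word of care is the matching of the two formulations of the top row: in~\eqref{equation:KLWdiagram} the middle term is written as $O^\mhyphen_{\GL_N/P} \times V$ with $V$ the image of $Z$ under the second projection, and one must check that this image is indeed all of $X_{Q_s}(w')$ and not a proper subscheme — but this is exactly Theorem~\ref{theorem:birational}\eqref{enum:birationalFibration}. Thus the proof is a short bookkeeping argument citing Theorem~\ref{theorem:birational}\eqref{enum:birationalMap}, \eqref{enum:birationalFibration} and \eqref{enum:birationalVB}, and Proposition~\ref{proposition:structureOMinus}\eqref{proposition:structureOMinusInverseImage}, \eqref{proposition:SLnModQ}.
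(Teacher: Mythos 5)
Your proposal is correct and matches the paper's (implicit) argument: the paper states this corollary without a separate proof, precisely because it is the direct assembly of Theorem~\ref{theorem:birational}\eqref{enum:birationalMap}, \eqref{enum:birationalFibration}, \eqref{enum:birationalVB} together with Proposition~\ref{proposition:structureOMinus}\eqref{proposition:structureOMinusInverseImage}, \eqref{proposition:SLnModQ}, exactly as you describe. (Your aside on the smoothness of $X_{Q_s}(w')$ is harmless but not needed — the generic diagram~\eqref{equation:genericKLW} only asks $V$ to be projective — and the phrase ``exhibits $X_{\tilde P}(\tilde w)$, hence $Z_{\tilde P}(\tilde w)$, as a vector bundle'' should read the other way: Theorem~\ref{theorem:birational}\eqref{enum:birationalVB} gives the bundle structure on $Z_{\tilde P}(\tilde w)$, which is smooth as an open subset of $X_{\tilde P}(\tilde w)$.)
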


We now describe a class of smooth varieties
$X_{\tilde P_s}({\tilde{w}})$ inside $\GL_N/\tilde{P_s}$.

\begin{proposition}
\label{proposition:smoothXtildew}
$X_{\tilde P_s}({\tilde{w}})$ is smooth in the following situations:
\begin{enumerate}

\item \label{enum:smoothXtildewkempfDesing}
$w \in W^P$ arbitrary and $s=1$~\cite{KempfSchubertMethodsCurves1971}. 

\item \label{enum:smoothXtildewOurDesing}
$w = (n-r+1, \ldots, n, a_{r+1}, \cdots, a_{n-1}, N) \in W^P$ for
some $1 \leq r \leq n-1$ and $s=r$.

\end{enumerate}
\end{proposition}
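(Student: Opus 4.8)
The plan is to verify the smoothness criterion of Proposition~\ref{proposition:tgt} directly in each of the two cases, i.e., to show that the dimension of the tangent space of $X_{\tilde P_s}(\tilde w)$ at $e_{\mathrm{id}}$ equals $\dim X_{\tilde P_s}(\tilde w) = \ell(\tilde w)$. For case~\eqref{enum:smoothXtildewkempfDesing}, with $s=1$, the parabolic $\tilde P_1 = P_{\widehat{\{1,\ldots,N-1\}\setminus\{1,\ldots\}}}$ — more precisely $\tilde P_1 = \bigcap_{i=1}^n P_{\widehat i}$ — is such that $\GL_N/\tilde P_1$ is a partial flag variety of subspaces of dimensions $1,2,\ldots,n$, and the Schubert varieties here are exactly Kempf's desingularizations of Grassmannian Schubert varieties; I would either cite \cite{KempfSchubertMethodsCurves1971} directly (as the statement already does) or observe that $X_{\tilde P_1}(\tilde w)$ is an iterated tower of Grassmannian bundles (a Bott–Samelson-type fibration over a point built from the flag $a_1 < a_2 < \cdots < a_n$), each stage being a Grassmann bundle over the previous, hence smooth.

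The substantive case is~\eqref{enum:smoothXtildewOurDesing}. Here $w = (n-r+1, \ldots, n, a_{r+1}, \ldots, a_{n-1}, N)$ with $r(w) = r$ (since $a_r = n \le n < a_{r+1}$) and $s = r$, so $\tilde P = \tilde P_r = \bigcap_{i=r}^n P_{\widehat i}$. First I would compute $\tilde w$, the minimal representative of $w\tilde P_r$ in $W^{\tilde P_r}$: since $W_{\tilde P_r}$ permutes the index blocks $\{1,\ldots,r\}$ among themselves (and similarly the block $\{r+1,\ldots,n\}$ is already dealt with because $P$ vs $\tilde P_r$ only differ in positions $r,\ldots,n-1$), the representative $\tilde w$ is obtained by sorting the first $r$ entries $n-r+1, \ldots, n$ increasingly — which they already are — so $\tilde w$ has one-line form governed by the sequence of partial sets $\{n-r+1,\ldots,n\} \subset \cdots$. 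Then I would compute $\ell(\tilde w) = \dim X_{\tilde P_r}(\tilde w)$ via the inversion count, and compare it to $\#\{s_\alpha \mid \alpha \in R^-\setminus R^-_{\tilde P_r}, \ \tilde w \ge s_\alpha \text{ in } W/W_{\tilde P_r}\}$. Using Discussion~\ref{discussionbox:Ominus}, $R^-\setminus R^-_{\tilde P_r}$ corresponds to pairs $(i,j)$ with $j \le n$, $i \ge \max\{j+1, r+1\}$, and the condition $\tilde w \ge s_{(i,j)}$ in $W/W_{P_{\widehat j}}$ is the combinatorial condition comparing the increasing rearrangement of $\{1,\ldots,j-1,i\}$ with $(\tilde w_1 < \cdots < \tilde w_j)$; I would show these conditions, summed over all relevant $j$, give exactly $\ell(\tilde w)$.

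The main obstacle will be the bookkeeping in case~\eqref{enum:smoothXtildewOurDesing}: correctly identifying $\tilde w$ as an element of $W^{\tilde P_r}$ (the parabolic $\tilde P_r$ omits the reflections $s_r, \ldots, s_{n-1}$, so cosets are classified by the flag of subsets of sizes $1, \ldots, r-1, r, n, n+1, \ldots, N-1$), and then matching the tangent-space count against the length. A cleaner route, which I would pursue in parallel, is to exhibit $X_{\tilde P_r}(\tilde w)$ as an explicit fibration: the first $r$ coordinates of $w$ being $n-r+1, \ldots, n$ and the last being $N$ force $Y_{\tilde P_r}(\tilde w)$ to be a \emph{linear} (coordinate) subspace of $O^-_{\GL_N/\tilde P_r}$ by the shape computation in Corollary~\ref{corollary:iden} — if I can show directly that the vanishing locus described there has the expected codimension, then smoothness of $Y_{\tilde P_r}(\tilde w)$ follows, and since $O^-_{\GL_N/\tilde P_r}$ meets every $B_N$-orbit of $X_{\tilde P_r}(\tilde w)$ and smoothness is a $B_N$-stable open condition (the smooth locus is $B_N$-stable and nonempty, hence contains $e_{\mathrm{id}}$, hence is everything for a Schubert variety), $X_{\tilde P_r}(\tilde w)$ is smooth everywhere. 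I expect the linear-subspace argument via Corollary~\ref{corollary:iden} to be the shortest, with the length/tangent-space comparison serving as a cross-check.
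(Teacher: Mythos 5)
Your proposal does not follow the paper's route, which is cleaner and worth knowing. The paper proves both cases uniformly by lifting to the \emph{maximal} representative $w_{\mathrm{max}}$ of the coset $\tilde w\tilde P_s$ in $W$: one checks that
\[
w_{\mathrm{max}} = (a_s, a_{s-1}, \ldots, a_1, a_{s+1}, \ldots, a_n, b_{n+1}, \ldots, b_N)
\]
is $4231$- and $3412$-avoiding, so $X_{B_N}(w_{\mathrm{max}})$ is smooth by the Lakshmibai--Sandhya criterion, and then observes that $X_{B_N}(w_{\mathrm{max}}) \to X_{\tilde P_s}(\tilde w)$ is a fibration with smooth fibre $\tilde P_s/B_N$. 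This sidesteps the combinatorial bookkeeping entirely and treats $s=1$ and $s=r$ in one stroke.

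Your proposed direct verification of the tangent-space criterion at $e_{\mathrm{id}}$, combined with the observation that the smooth locus of a Schubert variety is a non-empty open $B_N$-stable set (hence all of $X_{\tilde P_s}(\tilde w)$ once it contains $e_{\mathrm{id}}$), is a legitimate alternative strategy. But two things need attention. First, your ``cleaner route'' via Corollary~\ref{corollary:iden} is logically suspect as stated: that corollary, and Theorem~\ref{theorem:smoothnessAndLinearity}, take smoothness of $X_{\tilde P}(\tilde w)$ as a \emph{hypothesis}, so invoking them to prove smoothness is circular. What you can extract without circularity is the first half of the proof of Theorem~\ref{theorem:smoothnessAndLinearity}: for $j \le n$ and $i \ge \max\{a_j+1,s+1\}$, the Pl\"ucker coordinate $p^{(j)}_{(i,j)}$ vanishes on $X_{\tilde P}(\tilde w)$, so $x_{ij} \equiv 0$ on $Y_{\tilde P}(\tilde w)$; this gives containment of $Y$ in a coordinate subspace. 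To conclude equality one must still show the count of such $(i,j)$ equals $\dim O^{-}_{\GL_N/\tilde P} - \ell(\tilde w)$, which is exactly the tangent-space versus length comparison. So your two ``parallel'' routes are really the same computation in different clothing. Second, and more to the point, you never carry out that computation --- you flag the bookkeeping as ``the main obstacle,'' but it \emph{is} the entire content of the argument in your approach; a proof along these lines would need to actually exhibit $\ell(\tilde w)$ and match it against the root count for the specific $w=(n-r+1,\ldots,n,a_{r+1},\ldots,a_{n-1},N)$ with $s=r$. Until that is done, the proposal is a plan rather than a proof.
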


\begin{proof}
For both \eqref{enum:smoothXtildewkempfDesing} and
\eqref{enum:smoothXtildewOurDesing}: 
Let $w_{\mathrm{max}} \in W (= S_N)$ be the maximal representative of
$\tilde{w}$.
We {\textbf {claim}} that 
\[
w_{\mathrm{max}}  = (a_s, a_{s-1}, \ldots, a_1, a_{s+1}, a_{s+2},
\ldots, a_n, b_{n+1}, \ldots, b_N) \in W.
\]
Assume the claim. Then $w_{\mathrm{max}}$ is a
$4231$- and $3412$-avoiding element of $W$; hence 
$X_{B_N}(w_{\mathrm{max}})$ is smooth (see ~\cite 
{LakshmiSandhyaSmoothnessSLnModB1990} , ~\cite[8.1.1]{BilleyLakshmibaiSingularLoci2000}). 
Since $w_{\mathrm{max}}$ is the maximal representative (in $W$) of 
$\tilde w{\tilde{P}_s}$,
we see that $X_{B_N}(w_{\mathrm{max}})$ is a fibration over
$X_{\tilde{P}_s}(\tilde w)$ with smooth fibres 
${\tilde{P}_s}/B_N$; therefore 
$X_{\tilde{P}_s}(\tilde w)$ is smooth.

To prove the claim, we need to show that 
$X_{P_{\widehat{i}}}(w_{\mathrm{max}}) = X_{P_{\widehat{i}}}(\tilde w)$
for every $s
\leq i \leq n$ and that $w_{\mathrm{max}}$ is the maximal element of $W$
with this property. This follows, since for every $\tau := (c_1, \ldots, c_N)
\in W$ and for every $1 \leq i \leq N$, $X_{P_{\widehat{i}}}(\tau) = 
X_{P_{\widehat{i}}}(\tau')$ where $\tau' \in W^{P_{\widehat{i}}}$ is the
element with $c_1, \ldots, c_i$ written in the increasing order.
\end{proof}

In light of
Proposition~\ref{proposition:smoothXtildew}\eqref{enum:smoothXtildewOurDesing}
we make the following definition. Our concrete descriptions of free
resolutions will be for this class of Schubert varieties.
\begin{definition}
\label{definition:ClassWk}
Let $1 \leq r \leq n-1$. 
Let
$\calW_r = \{(n-r+1, \ldots, n, a_{r+1}, \cdots, a_{n-1}, N)  \in W^P : n < 
a_{r+1} < \cdots < a_{n-1} < N\}$.
\end{definition}

The determinantal variety of $(m \times n)$ matrices of rank at most $k$
can be realized as $Y_P(w)$, $w = (k+1, \ldots, n, N-k+1, \ldots N) \in
\calW_{n-k}$~\cite[Section~1.6]{SeshadriSMT07}.

\begin{example}
This example shows that even with $r=s$, $X_{Q_s}(w')$ need not be smooth
for arbitrary $w \in W^P$.
Let $n=m=4$ and $w = (2,4,7,8)$. Then $r=2$; take $s=2$. Then we obtain 
$w_{\mathrm{max}} = (4,2,7,8,5,6,3,1)$, which has a $4231$ pattern.
\end{example}

\section{Free resolutions}
\label{sec:freeresolutions}

\subsection*{Kempf-Lascoux-Weyman geometric technique}%

We summarize the geometric technique of computing free resolutions,
following \cite[Chapter~5]{WeymSyzygies03}.  Consider 
Diagram~\eqref{equation:genericKLW}.
There is a natural map $f : V \to \Grass_{r, d}$ (where $r = \rank_V\! Z$ 
and $d=\dim \bbA$)
such that the inclusion $Z \subseteq \bbA \times V$ is
the pull-back of the tautological sequence~\eqref{equation:tautseq};
here $\rank_V Z$ denotes the rank of $Z$ as a vector-bundle  over $V$, 
i.e., $\rank_V Z = \dim Z - \dim V$.
Let $\xi = (f^* \calQ)^*$. Write $R$ for the polynomial ring
$\complex[\bbA]$ and $\frakm$ for its homogeneous maximal ideal.  (The 
grading on $R$ arises as follows. In
Diagram~\eqref{equation:genericKLW}, $\bbA$ is thought of as the fibre of a
trivial vector-bundle, so it has a distinguished point, its origin. Now,
being a sub-bundle, $Z$ is defined by linear equations in each fibre; i.e.,
for each $v \in V$, there exist $s := (\dim \bbA - \rank_V Z)$ linearly
independent linear polynomials $\ell_{v, 1}, \ldots, \ell_{v, s}$ that 
vanish along $Z$ and define it. Now $Y = \{y \in \bbA :
\text{there exists $v \in V$ such that}\; \ell_{v,1}(y) = \cdots =
\ell_{v,s}(y)=0\}$. Hence $Y$ is defined by homogeneous polynomials. This
explains why the resolution obtained below is graded.) Let $\frakm$ be the
homogeneous maximal ideal, i.e., the ideal defining the origin in $\bbA$.
Then:
\begin{theorem}[\protect{\cite[Basic Theorem~5.1.2]{WeymSyzygies03}}]
\label{theorem:geometrictechnique}
With notation as above, there is a finite complex $(F_\bullet,
\partial_\bullet)$ of finitely generated graded free $R$-modules that is
quasi-isomorphic to $\RDer q'_* \strSh_{Z}$, with \[
F_i = \oplus_{j \geq 0} \homology^j(V, \bigwedge^{i+j} \xi)
\otimes_\complex R(-i-j),
\]
and $\partial_i(F_i) \subseteq \frakm F_{i-1}$.  Furthermore, the following
are equivalent:
\begin{asparaenum}
\item $Y$ has rational singularities, i.e.  $\RDer q'_* \strSh_{Z}$ is 
quasi-isomorphic to $\strSh_{Y}$;
\item $F_\bullet$ is a minimal $R$-free resolution of $\complex[Y]$, i.e., 
$F_0 \simeq R$ and $F_{-i} = 0$ for every $i > 0$.
\end{asparaenum}
\end{theorem}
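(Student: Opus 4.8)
This is \cite[Basic Theorem~5.1.2]{WeymSyzygies03}, so I would follow Weyman's argument; here is the plan. Write $p_V\colon\bbA\times V\to V$ for the second projection. Since the inclusion $Z\subseteq\bbA\times V$ is the pull-back along $f$ of the tautological inclusion $\calR\hookrightarrow\complex^d\otimes\strSh_V$ of~\eqref{equation:tautseq}, the sub-bundle of $\bbA^*\otimes\strSh_V$ cut out by the linear forms vanishing on $Z$ is precisely $\xi=(f^*\calQ)^*$; composing its pull-back $p_V^*\xi\hookrightarrow\bbA^*\otimes\strSh_{\bbA\times V}$ with the tautological evaluation $\bbA^*\otimes\strSh_{\bbA\times V}\to\strSh_{\bbA\times V}$ yields a map whose image is the ideal sheaf of $Z$. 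Hence $Z$ is a complete intersection of codimension $s=\rank\xi=\dim\bbA-\rank_V Z$, and its Koszul complex
\[
0\to p_V^*\bigwedge^s\xi\to\cdots\to p_V^*\bigwedge^2\xi\to p_V^*\xi\to\strSh_{\bbA\times V}\to\strSh_Z\to 0
\]
is a locally free resolution of $\strSh_Z$. As the tautological evaluation is linear in the fibre coordinates of $\bbA$, this is a complex of graded sheaves once $p_V^*\bigwedge^t\xi$ is placed in internal degree $t$.

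Pushing this resolution forward along $q\colon\bbA\times V\to\bbA$ computes $\RDer q_*\strSh_Z$, which equals $\RDer q'_*\strSh_Z$ since the square~\eqref{equation:genericKLW} commutes and $Z\hookrightarrow\bbA\times V$, $Y\hookrightarrow\bbA$ are closed immersions. Because $\bbA=\Spec R$ with $R$ a polynomial ring and therefore regular, the bounded complex $\RDer q'_*\strSh_Z$ admits a finite minimal $R$-free model $F_\bullet$, that is, a bounded complex of finitely generated graded free $R$-modules, quasi-isomorphic to $\RDer q'_*\strSh_Z$, with $\partial_i(F_i)\subseteq\frakm F_{i-1}$. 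So the minimality statement holds by construction; it remains to identify the terms $F_i$.

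Since $F_\bullet$ is minimal, $F_\bullet\otimes_R\complex$ has zero differential (here $\complex=R/\frakm$), so $F_i\otimes_R\complex=\homology_i(\RDer q'_*\strSh_Z\otimes^{\mathbf{L}}_R\complex)$ as graded vector spaces. To compute the right-hand side, restrict the Koszul resolution of $\strSh_Z$ to $\{0\}\times V$, where $0$ is the origin of $\bbA$: since $q$ is proper (as $V$ is projective), base change identifies $\RDer q'_*\strSh_Z\otimes^{\mathbf{L}}_R\complex$ with $\RDer\Gamma(V,\ \strSh_Z\vert^{\mathbf{L}}_{\{0\}\times V})$, and the restriction to $\{0\}\times V$ of the Koszul differential is contraction against the defining section of $Z$, which vanishes at the origin and is hence zero. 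Therefore $\strSh_Z\vert^{\mathbf{L}}_{\{0\}\times V}\simeq\bigoplus_{t\ge 0}\bigwedge^t\xi\,[t]$ on $V$, with $\bigwedge^t\xi$ in internal degree $t$, so
\[
\homology_i(\RDer q'_*\strSh_Z\otimes^{\mathbf{L}}_R\complex)=\bigoplus_{j\ge 0}\homology^j(V,\bigwedge^{i+j}\xi),
\]
placed in internal degree $i+j$ on the $j$-th summand. This yields $F_i=\bigoplus_{j\ge 0}\homology^j(V,\bigwedge^{i+j}\xi)\otimes_\complex R(-i-j)$. I expect this step to be the main one to get right: the base-change identification (where properness of $q$ is used) and the consistent tracking of the internal grading.

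Finally, the equivalence of (1) and (2). By construction $F_\bullet$ is \emph{always} quasi-isomorphic to $\RDer q'_*\strSh_Z$. If $\RDer q'_*\strSh_Z$ is quasi-isomorphic to $\strSh_Y$, then $F_\bullet$ is a minimal $R$-free complex quasi-isomorphic to the cyclic module $\complex[Y]$ in homological degree $0$; by uniqueness of the minimal free resolution this forces $F_0\simeq R$, $F_{-i}=0$ for all $i>0$, and $\homology_i(F_\bullet)=0$ for $i\neq 0$, i.e.\ $F_\bullet$ is the minimal $R$-free resolution of $\complex[Y]$. Conversely, if $F_\bullet$ is the minimal $R$-free resolution of $\complex[Y]$, then $\RDer q'_*\strSh_Z\simeq F_\bullet\simeq\strSh_Y$, which is statement (1).
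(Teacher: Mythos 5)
Your derivation of the terms $F_i$ is correct, and it takes a genuinely different (and cleaner) route than the paper. Where the paper builds a second-quadrant double complex out of the Koszul resolution and a $q_*$-acyclic resolution, reads off $G_\bullet$, and then extracts a minimal subcomplex, you push forward the Koszul resolution, take a minimal model over the regular ring $R$, and identify its terms by derived restriction to the fibre over the origin. This is a slicker way to land on $F_i = \oplus_{j\ge 0}\homology^j(V,\wedge^{i+j}\xi)\otimes R(-i-j)$ and the internal-degree bookkeeping is handled correctly. One small imprecision: you justify the base-change isomorphism $\RDer q'_*\strSh_Z\otimes^{\mathbf{L}}_R\complex \simeq \RDer\Gamma(V,\strSh_Z|^{\mathbf{L}}_{\{0\}\times V})$ by properness of $q$ alone. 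Properness is not enough on its own; what makes this work is that the Koszul resolution consists of sheaves flat over $\bbA$ (equivalently $q$ is flat), so the square is tor-independent and derived base change applies. Worth stating.

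There is, however, a genuine gap in the equivalence of (1) and (2), and it is precisely the point the paper flags when it says ``one direction of the equivalence is only implicit in the proof of~\cite[5.1.3]{WeymSyzygies03}.'' Statement (2) contains an ``i.e.'': it asserts both that $F_\bullet$ is a minimal free resolution of $\complex[Y]$ and that this is \emph{equivalent} to the purely shape-theoretic condition $F_0\simeq R$ and $F_{-i}=0$ for $i>0$. You prove (1)$\;\Leftrightarrow\;$``$F_\bullet$ resolves $\complex[Y]$'', which is the easy part. What you do not prove is that the shape condition already forces $\homology_0(F_\bullet)=\complex[Y]$ rather than some larger cyclic module. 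A priori, $F_0\simeq R$ and $F_{-i}=0$ for $i>0$ only give that $\homology_0(F_\bullet)=q'_*\strSh_Z$ is a cyclic $R$-module, and that $R^iq'_*\strSh_Z=0$ for $i\ge 1$. To conclude $q'_*\strSh_Z=\strSh_Y$ one must use that $q'_*\strSh_Z$ is a sheaf of $\strSh_Y$-\emph{algebras} containing $\strSh_Y$ (injectivity of $\strSh_Y\to q'_*\strSh_Z$ coming from $q'$ being proper, birational, hence surjective onto the integral scheme $Y$): a cyclic $R$-module that is also an $\strSh_Y$-algebra extension of $\strSh_Y$ must equal $\strSh_Y$. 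This chain -- ``$q'_*\strSh_Z=\strSh_Y$ iff $q'_*\strSh_Z$ is generated by one element as $\strSh_Y$-module iff as $R$-module iff $F_0$ has rank one iff $F_0=R(0)$'' -- is the content of the paper's sketch, and it is what your converse skips by taking the strong form ``$F_\bullet$ resolves $\complex[Y]$'' as the hypothesis rather than the shape condition.
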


We give a sketch of the proof because one direction of the
equivalence is only implicit in the proof of~\cite[5.1.3]{WeymSyzygies03}.

\begin{proof}[Sketch of the proof] One constructs a suitable $q_*$-acyclic
resolution $\calI^\bullet$ of the Koszul complex that resolves $\strSh_{Z}$ 
as an $\strSh_{\bbA \times V}$-module so that the
terms in $q_*\calI^\bullet$ are \emph{finitely generated free} graded
$R$-modules. One places the Koszul complex on the negative horizontal axis
and thinks of $\calI^\bullet$ as a second-quadrant double complex, thus to
obtain a complex $G_\bullet$ of finitely generated free $R$-modules whose
homology at the $i$th position is $R^{-i}q_* \strSh_Z$. Then,
using standard homological considerations, one constructs a subcomplex
$(F_\bullet, \partial_\bullet)$ of $G_\bullet$ that is quasi-isomorphic to
$G_\bullet$ with $\partial_i(F_i) \subseteq \frakm F_{i-1}$ (we say that
$F_\bullet$ is \define{minimal} if this happens), and since
$\homology_i(G_\bullet) = 0$ for every $|i| \gg 0$, $F_i = 0$ for every $|i| 
\gg 0$. Now using the minimality of $F_\bullet$, we see that
$R^iq_*\strSh_Z = 0$ for every $i \geq 1$ if and only if $F_{-i} = 0$ for every 
$i \geq 1$. When one of these  conditions
holds, then $F_\bullet$ becomes a minimal free resolution of
$q_*\strSh_Z$ which is a finitely generated
$\strSh_Y$-module, and therefore $q_*\strSh_Z = \strSh_Y$ if and only if 
$q_*\strSh_Z$ is generated by one element as an
$\strSh_Y$-module if and only if $q_*\strSh_Z$ is a generated by one 
element as an $R$-module
if and only if $F_0$ is a free $R$-module of rank one
if and only if $F_0 = R(0)$ since $\homology^0(V, \bigwedge^0 \xi) \otimes
R$ is a summand of $F_0$.
\end{proof}

\subsection*{Our situation}

We now apply Theorem~\ref{theorem:geometrictechnique} to our situation.
We keep the notation of Theorem~\ref{theorem:birational}.
Theorem~\ref{theorem:geometrictechnique} and
Corollary~\ref{corollary:ourRealization} yield the following result:

\begin{theorem}
\label{theorem:stepone}
Suppose that $X_{\tilde P_s}({\tilde{w}})$ is smooth.  Write $\calU_w$ for 
the restriction to $X_{Q_s}(w')$ of the vector-bundle on $\GL_n/Q_s$ 
associated to the $Q_s$-module $\left(O^\mhyphen_{\GL_N/P}/ \calV_w\right)^*$.
(This is the dual of the quotient of $O^\mhyphen_{\GL_N/P} \times X_{Q_s}(w')$ by 
$Z_{\tilde{P}_s}(\tilde w)$.)
Then we have a minimal $R$-free resolution $(F_\bullet, \partial_\bullet)$ 
of
$\complex[Y_P(w)]$  with
\[
F_i = \oplus_{j \geq 0} \homology^j(X_{Q_s}(w'), \bigwedge^{i+j} \calU_w)
\otimes_\complex R(-i-j).
\]
\end{theorem}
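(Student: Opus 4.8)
The plan is to apply Theorem~\ref{theorem:geometrictechnique} directly to the realization of Diagram~\eqref{equation:genericKLW} supplied by Corollary~\ref{corollary:ourRealization}. First I would set, in the notation of Theorem~\ref{theorem:geometrictechnique}, $\bbA = O^\mhyphen_{\GL_N/P}$, $Y = Y_P(w)$, $V = X_{Q_s}(w')$, $Z = Z_{\tilde P_s}(\tilde w)$, $q' = $ the map $Z_{\tilde P_s}(\tilde w) \to Y_P(w)$, and $q = $ the first projection $O^\mhyphen_{\GL_N/P} \times X_{Q_s}(w') \to O^\mhyphen_{\GL_N/P}$. The hypotheses of the geometric technique then need to be checked: $\bbA$ is an affine space (Discussion~\ref{discussionbox:Ominus}), $Y$ is a closed subvariety of $\bbA$, $V$ is a projective variety (a Schubert variety in $\GL_n/Q_s$), $q$ is the first projection, $q'$ is proper and birational by Theorem~\ref{theorem:birational}\eqref{enum:birationalMap}, and $Z \hookrightarrow \bbA \times V$ is a sub-bundle of the trivial bundle over $V$ by Theorem~\ref{theorem:birational}\eqref{enum:birationalVB} (using smoothness of $X_{\tilde P_s}(\tilde w)$).

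Next I would identify the bundle $\xi$ of Theorem~\ref{theorem:geometrictechnique}. By Theorem~\ref{theorem:birational}\eqref{enum:birationalVB}, the sub-bundle $Z_{\tilde P_s}(\tilde w)$ of the trivial bundle $O^\mhyphen_{\GL_N/P} \times X_{Q_s}(w')$ is the restriction to $X_{Q_s}(w')$ of the homogeneous bundle on $\GL_n/Q_s$ associated to the $Q_s$-module $\calV_w \subseteq O^\mhyphen_{\GL_N/P}$. The quotient bundle $\calQ$ appearing in the tautological sequence~\eqref{equation:tautseq} pulled back along $f : V \to \Grass$ is therefore the bundle associated to the quotient $Q_s$-module $O^\mhyphen_{\GL_N/P}/\calV_w$, so $\xi = (f^*\calQ)^*$ is the bundle associated to $\left(O^\mhyphen_{\GL_N/P}/\calV_w\right)^*$, restricted to $X_{Q_s}(w')$; this is precisely the bundle called $\calU_w$ in the statement. (The fact that the quotient $Q_s$-module structure and the right-action conventions of Section~\ref{sec:homogvb} match the tautological sequence is a compatibility check — essentially the content of Theorem~\ref{theorem:birational}\eqref{enum:birationalVB} together with Definition~\ref{definition:assbundle} — and I would state it without belaboring the bookkeeping.) Plugging $\xi = \calU_w$ into the formula of Theorem~\ref{theorem:geometrictechnique} gives
\[
F_i = \bigoplus_{j \geq 0} \homology^j\!\left(X_{Q_s}(w'), \textstyle\bigwedge^{i+j}\calU_w\right) \otimes_\complex R(-i-j),
\]
which is the desired form.

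Finally, to conclude that $F_\bullet$ is actually a \emph{minimal free resolution} of $\complex[Y_P(w)]$ (and not merely a complex quasi-isomorphic to $\RDer q'_* \strSh_Z$ with minimal differentials), I would invoke the equivalence in Theorem~\ref{theorem:geometrictechnique}: it suffices to show $Y_P(w)$ has rational singularities, equivalently that the natural map $\strSh_{Y_P(w)} \to \RDer q'_* \strSh_Z$ is a quasi-isomorphism. Since $q'$ is a proper birational morphism from $Z_{\tilde P_s}(\tilde w)$, and $Z_{\tilde P_s}(\tilde w)$ is smooth (it is a vector-bundle over the smooth variety $X_{Q_s}(w')$ by Theorem~\ref{theorem:birational}\eqref{enum:birationalVB}, or one may instead use that $Z_{\tilde P_s}(\tilde w)$ is an open subset of the smooth Schubert variety $X_{\tilde P_s}(\tilde w)$), the map $q'$ is a desingularization of $Y_P(w)$; and $Y_P(w) = X_P(w) \cap O^\mhyphen_{\GL_N/P}$ is an open subvariety of the Schubert variety $X_P(w)$, which has rational singularities (Remark~\ref{remark:SchubCMNormaletc}), so $Y_P(w)$ does too. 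Hence condition (a) of Theorem~\ref{theorem:geometrictechnique} holds, giving condition (b), i.e.\ $F_\bullet$ is a minimal $R$-free resolution of $\complex[Y_P(w)]$. I expect the only genuinely non-formal point is the identification of $\xi$ with $\calU_w$ — matching the tautological quotient bundle from the geometric technique with the explicit associated-bundle description from Theorem~\ref{theorem:birational}\eqref{enum:birationalVB}, keeping the right-action conventions straight — while everything else is assembling results already proved.
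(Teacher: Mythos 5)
Your proof is correct and follows exactly the approach the paper intends: the paper states Theorem~\ref{theorem:stepone} with only the remark that it ``follows from Theorem~\ref{theorem:geometrictechnique} and Corollary~\ref{corollary:ourRealization},'' and your argument supplies precisely the verification of the geometric-technique hypotheses, the identification of $\xi$ with $\calU_w$ via Theorem~\ref{theorem:birational}\eqref{enum:birationalVB}, and the rational-singularities argument (cf.\ Remark~\ref{remark:SchubCMNormaletc}) that the paper also invokes in its Introduction to conclude $F_\bullet$ is a genuine minimal free resolution rather than merely a minimal complex.
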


In the first case, $Q_s = B_n$, so $p$ makes $Z_{\tilde{P}_1}(\tilde w)$ a 
vector-bundle on a smooth Schubert subvariety
$X_{B_1}(w')$ of $\GL_n/B_n$. In the second case, $w'$ is
the maximal word in $S_n$, so $X_{Q_r}(w') = \GL_n/Q_r$; see
Discussion~\ref{discussionbox:AssocModToVw} for further details.

Computing the cohomology groups required in Theorem~\ref{theorem:stepone} 
in the general situation of Kempf's desingularization
(Proposition~\ref{proposition:smoothXtildew}%
\eqref{enum:smoothXtildewkempfDesing})
is a difficult problem, even though the relevant Schubert variety 
$X_{B_n}(w')$ is smooth.
Hence we are forced to restrict our attention to the subset of $W^P$
considered in Proposition~\ref{proposition:smoothXtildew}%
\eqref{enum:smoothXtildewOurDesing}.

The stipulation that, for $w \in \calW_r$,
$w$ sends $n$ to $N$ is not very
restrictive.
This can be seen in two (related) ways.  Suppose that $w$ does
not send $n$ to $N$. Then, firstly, $X_{P}(w)$ can be thought of as a
Schubert subvariety of a smaller Grassmannian.
Or, secondly, $\calU_w$ will contain the trivial bundle $\calU_n$ as a
summand, so $\homology^0(\GL_n/Q_r, \xi) \neq 0$, i.e., $R(-1)$ is a 
summand
of $F_1$. In other words, the defining ideal of $Y_{P}(w)$ contains a
linear form.

\begin{discussionbox}
\label{discussionbox:AssocModToVw}
We give some more details of the situation in
Proposition~\ref{proposition:smoothXtildew}\eqref{enum:smoothXtildewOurDesing}
that will be used in the next section.
Let $w  = (n-r+1, n-r+2, \ldots, n, a_{r+1}, \ldots, a_{n-1}, N) \in 
\calW_r$.
The space of $(m\times n)$ matrices is a $\GL_{n}$-module with a right
action; the subspace $\calV_w$ is $Q_r$-stable under this action. Thus
$\calV_w$ is a $Q_r$-module, and gives an associated vector-bundle
$(\GL_{n}\times^{Q_r} \calV_w)$ on $\GL_n/Q_r$. The action on the right
of $\GL_n$ on the space of $(m\times n)$ matrices breaks by rows; each row
is a natural $n$-dimensional representation of $\GL_n$. For each $1 \leq j
\leq m$, there is a unique $r \leq i_j \leq n-1$ such that $a_{i_j} < j+n 
\leq
a_{i_j+1}$.  (Note that $a_r=n$ and $a_n=N$.)
In row $j$, $\calV_w$ has rank $n-i_j$, and is a sub-bundle of
the natural representation. Hence the vector-bundle associated to the $j$th
row of $\calV_w$ is the pull-back of the  tautological
sub-bundle (of rank $(n-i_j)$)
on $\Grass_{n-i_j, n}$.
We denote this by $\calR_{n-i_j}$. Therefore $(\GL_{n}\times^{Q_r} 
\calV_w)$ is the vector-bundle $\calR_w :=
\bigoplus_{j=1}^m \calR_{n-i_j}$. 
Let $\calQ_w := \bigoplus_{j=1}^m \calQ_{i_j}$ where $\calQ_{i_j}$ the 
tautological quotient bundles corresponding to $\calR_{n-i_j}$.  
Then the vector-bundle $\calU_w$ on $\GL_n/Q_r$ that was 
defined in Theorem~\ref{theorem:stepone} is $\calQ_w^*$.
\end{discussionbox}

\section{Cohomology of Homogeneous Vector-Bundles}
\label{sec:steptwo}

It is, in general, difficult to compute the cohomology groups
$\homology^j(\GL_n/Q_r, \bigwedge^{t} \calU_w)$ in 
Theorem~\ref{theorem:stepone} for arbitrary $w \in \calW_r$.
In this section, we will discuss some approaches. We believe that this is a
problem of independent interest. Our method involves replacing $Q_r$
inductively by increasingly bigger parabolic subgroups, so we give the
general set-up below.

\begin{setup}
\label{setup:compute}
Let $1 \leq r\leq n-1$. Let $m_r, \ldots, m_{n-1}$ be non-negative integers
such that $m_r+ \cdots+ m_{n-1} = m$. Let $Q$ be a parabolic subgroup of
$\GL_n$ such that $Q \subseteq P_{\widehat i}$ for every $r \leq i \leq n-1$ 
such
that $m_i > 0$. We consider the homogeneous vector-bundle $\xi = 
\oplus_{i=r}^{n-1} \calU_i^{m_i}$ on $\GL_n/Q$, 
We want to compute the vector-spaces $\homology^j(\GL_n/Q_r, \bigwedge^{t} 
\xi)$.
\end{setup}

\begin{lemma}
\label{lemma:pullback}
Let $f : X' \to X$ be a fibration with fibre some Schubert subvariety
$Y$ of some (partial) flag variety.
Then $f_*\strSh_{X'} = \strSh_X$ and $R^if_*\strSh_{X'} = 0$ for every $i
\geq 1$.
In particular, for every locally free coherent sheaves $L$ on $X$, 
$\homology^i(X', f^*L) = \homology^i(X,L)$ for every $i \geq 0$.
\end{lemma}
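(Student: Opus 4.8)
The plan is to reduce the statement to a fact about Schubert varieties in (partial) flag varieties, namely that they have vanishing higher cohomology for the structure sheaf and that the structure sheaf is the pushforward from a resolution. Concretely, the key claim is that for a Schubert subvariety $Y$ of a (partial) flag variety, $\homology^0(Y, \strSh_Y) = \complex$ and $\homology^i(Y, \strSh_Y) = 0$ for $i \geq 1$; since Schubert varieties are irreducible, reduced, and projective, the first assertion is immediate, and the vanishing of higher cohomology is a standard consequence of Schubert varieties being normal with rational singularities (alternatively, of Frobenius splitting / Bott-vanishing type results), as recalled in Remark~\ref{remark:SchubCMNormaletc}. One subtlety is that the statement of the lemma concerns the relative situation, so I would want the cohomology vanishing not just for a single fibre but uniformly; this is handled because $f$ is a Zariski-locally trivial fibration (being a fibration with fibre a Schubert variety in a flag variety, it arises from a parabolic-subgroup quotient, hence is locally trivial in the Zariski topology, or at least flat with geometrically integral fibres), so cohomology and base change applies cleanly.

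First I would fix an affine open cover $\{U_\alpha\}$ of $X$ over which $f$ is trivial, so that $f^{-1}(U_\alpha) \cong U_\alpha \times Y$. On each such piece, by the Künneth formula (or flat base change along $U_\alpha \hookrightarrow X$ composed with the projection), $R^if_*\strSh_{X'}|_{U_\alpha} = \homology^i(Y, \strSh_Y) \otimes_\complex \strSh_{U_\alpha}$. Invoking the cohomology of Schubert varieties as above, $\homology^0(Y,\strSh_Y) = \complex$ and $\homology^i(Y,\strSh_Y) = 0$ for $i \geq 1$, so $f_*\strSh_{X'}|_{U_\alpha} = \strSh_{U_\alpha}$ and $R^if_*\strSh_{X'}|_{U_\alpha} = 0$ for $i \geq 1$. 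Since these identifications are compatible with the natural map $\strSh_X \to f_*\strSh_{X'}$ on overlaps (the map is the obvious one, pulling back functions), they glue to give $f_*\strSh_{X'} = \strSh_X$ and $R^if_*\strSh_{X'} = 0$ for $i \geq 1$ on all of $X$.

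For the last assertion, given a locally free coherent sheaf $L$ on $X$, I would apply the projection formula: $Rf_*(f^*L) \cong L \otimes^{\mathbf L} Rf_*\strSh_{X'} \cong L \otimes \strSh_X = L$, using the first part to see that $Rf_*\strSh_{X'}$ is concentrated in degree zero and equal to $\strSh_X$. Then the Leray spectral sequence $\homology^p(X, R^qf_*f^*L) \Rightarrow \homology^{p+q}(X', f^*L)$ degenerates, giving $\homology^i(X', f^*L) = \homology^i(X, L)$ for all $i \geq 0$.

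I expect the main obstacle to be purely expository rather than mathematical: making precise the sense in which $f$ is ``a fibration with Schubert-variety fibre'' so that local triviality (hence base change and Künneth) is legitimately available, and pinning down the reference for $\homology^{>0}(Y,\strSh_Y) = 0$ for a Schubert variety $Y$ in a partial flag variety. Both are standard — the fibrations arising in this paper (e.g.\ $p$ in Theorem~\ref{theorem:birational}, or projections $\GL_N/\tilde P_s \to \GL_N/P$) are Zariski-locally trivial with homogeneous-space or Schubert-variety fibres, and the cohomology vanishing follows from normality plus rational singularities of Schubert varieties (Remark~\ref{remark:SchubCMNormaletc}) — but the proof as written will need to cite these cleanly.
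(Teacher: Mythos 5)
Your proof is correct and follows essentially the same route as the paper: the key input in both is that $\homology^0(Y,\strSh_Y)=\complex$ and $\homology^i(Y,\strSh_Y)=0$ for $i\ge 1$ when $Y$ is a Schubert variety (the paper cites Seshadri's book for this), and the second assertion is deduced in both cases from the projection formula together with the Leray spectral sequence. The only cosmetic difference is in the first assertion: you invoke Zariski-local triviality plus K\"unneth and glue, whereas the paper cites Grauert's theorem (Hartshorne III.12.9) directly, which does the same work via flatness and constancy of fibre-cohomology dimensions without needing to make local triviality explicit.
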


\begin{proof}
The first assertion is a consequence of Grauert's
theorem~\cite[III.12.9]{HartAG} and the fact (see, for
example,~\cite[Theorem~3.2.1]{SeshadriSMT07})
that \[
\homology^i(Y, \strSh_Y) = \begin{cases}
\complex, & \text{if}\; i=0\\
0, & \text{otherwise}.
\end{cases}
\]
The second assertion follows from the projection formula and the Leray
spectral sequence.
\end{proof}

\begin{proposition}
\label{proposition:removeUnnecParabolics}
Let $m_i, r \leq i \leq n-1$ be as in Set-up~\ref{setup:compute}.  Let $Q' 
= \bigcap\limits_{\substack{r \leq i \leq n-1 \\ m_i > 0}}
P_{\hat i}$. Then
$\homology^*(\GL_n/Q, \bigwedge^t\xi) = \homology^*(\GL_n/Q', 
\bigwedge^t\xi)$ for every $t$.
\end{proposition}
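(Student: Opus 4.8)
The plan is to factor the natural map $\GL_n/Q \to \GL_n/Q'$ and apply Lemma~\ref{lemma:pullback}. Since $Q \subseteq P_{\hat i}$ for every $i$ with $r \leq i \leq n-1$ and $m_i > 0$, we have $Q \subseteq Q'$, so there is a natural projection $f : \GL_n/Q \to \GL_n/Q'$. The key observation is that the bundle $\xi = \bigoplus_{i=r}^{n-1} \calU_i^{m_i}$ only involves the summands $\calU_i$ for those $i$ with $m_i > 0$, and each such $\calU_i = \calQ_i^*$ is the pull-back to $\GL_n/Q$ of the corresponding tautological bundle on $\Grass_{i,n} = \GL_n/P_{\hat i}$, hence already defined on $\GL_n/Q'$ (since $Q' \subseteq P_{\hat i}$ for each such $i$). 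Therefore $\xi = f^*\xi'$ for a homogeneous bundle $\xi'$ on $\GL_n/Q'$, and consequently $\bigwedge^t \xi = f^*(\bigwedge^t \xi')$.

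The next step is to check that $f$ is a fibration whose fibre is a Schubert subvariety of a (partial) flag variety — in fact the fibre is $Q'/Q$, which is itself a (partial) flag variety and hence trivially a Schubert variety in itself (taking the longest element). Then Lemma~\ref{lemma:pullback} applies directly: $\homology^i(\GL_n/Q, f^*(\bigwedge^t \xi')) = \homology^i(\GL_n/Q', \bigwedge^t \xi')$ for every $i$. Finally, one identifies $\bigwedge^t \xi'$ on $\GL_n/Q'$ with the bundle denoted $\bigwedge^t \xi$ in the statement (abusing notation as the paper does for $\calR_i$, $\calQ_i$, $\calU_i$ across flag varieties), which gives the claimed equality $\homology^*(\GL_n/Q, \bigwedge^t\xi) = \homology^*(\GL_n/Q', \bigwedge^t\xi)$.

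I expect the only genuinely substantive point to be the verification that $\xi$ descends through $f$, i.e. that each $\calU_i$ appearing with positive multiplicity is pulled back from $\GL_n/Q'$; this is where the hypothesis ``$Q \subseteq P_{\hat i}$ for every $r \leq i \leq n-1$ with $m_i > 0$'' and the definition of $Q'$ as exactly $\bigcap_{m_i > 0} P_{\hat i}$ are used, and it rests on the description in Discussion~\ref{discussionbox:taut} of the tautological bundles and their pull-backs along projections between partial flag varieties. Everything else — that $f$ is a fibration with fibre the partial flag variety $Q'/Q$, and the application of Lemma~\ref{lemma:pullback} — is routine. One should also remark that $Q' $ is indeed a parabolic subgroup (a finite intersection of maximal parabolics containing $B_n$), so that $\GL_n/Q'$ is again a flag variety and the setup of Lemma~\ref{lemma:pullback} is legitimately in force.
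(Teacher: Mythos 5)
Your proposal is correct and follows exactly the same route as the paper: you observe that $\bigwedge^t\xi$ on $\GL_n/Q$ is pulled back from $\GL_n/Q'$ along the natural projection and then invoke Lemma~\ref{lemma:pullback}. The paper's own proof is a one-line version of this; your version simply spells out the intermediate checks (descent of each $\calU_i$ with $m_i>0$, identification of the fibre $Q'/Q$ as a Schubert variety in a flag variety).
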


\begin{proof}
The assertion follows from Lemma~\ref{lemma:pullback}, noting that 
$\bigwedge^t\xi$ on $\GL_n/Q$ is the pull-back of $\bigwedge^t\xi$ on 
$\GL_n/Q'$, under the natural morphism $\GL_n/Q \to \GL_n/Q'$.
\end{proof}

\begin{proposition}
\label{proposition:vanishingForxi}
For all $j$, $\homology^j(\GL_n/Q, \xi)=0$.
\end{proposition}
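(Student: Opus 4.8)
The plan is to peel off the structure of $\xi$ and reduce, in two easy formal steps, to the vanishing of the cohomology of a single tautological bundle on a Grassmannian, which is a standard Borel--Weil--Bott computation. First, since sheaf cohomology commutes with finite direct sums and $\xi=\bigoplus_{i=r}^{n-1}\calU_i^{m_i}$, it suffices to show $\homology^j(\GL_n/Q,\calU_i)=0$ for every $j\geq 0$ and every index $i$ with $m_i>0$. Fix such an $i$. By the hypothesis in Setup~\ref{setup:compute} we have $Q\subseteq P_{\widehat i}$, so $\calU_i$ on $\GL_n/Q$ is the pull-back of $\calU_i$ on $\GL_n/P_{\widehat i}$ along the natural fibration $f\colon\GL_n/Q\to\GL_n/P_{\widehat i}$, whose fibre is the flag variety $P_{\widehat i}/Q$; a flag variety is in particular a Schubert variety in itself, so Lemma~\ref{lemma:pullback} gives $\homology^j(\GL_n/Q,\calU_i)=\homology^j(\GL_n/P_{\widehat i},\calU_i)$ for all $j$. (One may equivalently invoke Proposition~\ref{proposition:removeUnnecParabolics} to replace $Q$ by $\bigcap_{m_i>0}P_{\widehat i}$ first, and then project to the relevant Grassmannian.) So everything reduces to showing that the irreducible homogeneous bundle $\calU_i$ has vanishing cohomology in all degrees on $\GL_n/P_{\widehat i}$.

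For that last step I would argue via the tautological sequence. Writing $\calU_i=\calQ_i^{*}$ and dualizing the sequence $0\to\calR\to\complex^n\otimes\strSh\to\calQ_i\to 0$ on the Grassmannian gives a short exact sequence $0\to\calQ_i^{*}\to(\complex^n)^{*}\otimes\strSh\to\calR^{*}\to 0$ whose middle term is trivial, hence has $\homology^0=(\complex^n)^{*}$ and $\homology^{>0}=0$. One knows (Borel--Weil--Bott, or the standard computation in Weyman's book) that $\homology^{\bullet}(\calR^{*})$ is $(\complex^n)^{*}$ concentrated in degree $0$, and that the edge map $(\complex^n)^{*}=\homology^0((\complex^n)^{*}\otimes\strSh)\to\homology^0(\calR^{*})$ is the canonical isomorphism (it is injective because the fibres of $\calR$ span $\complex^n$, and both sides have dimension $n$). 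Feeding this into the long exact cohomology sequence forces $\homology^j(\calQ_i^{*})=0$ for every $j$, as desired. Alternatively one can bypass the exact sequence entirely and apply Bott's algorithm directly to the $\GL_n$-weight attached to $\calU_i$: after the $\rho$-shift that weight is non-regular (it acquires a repeated coordinate), so by Bott's theorem \emph{all} its cohomology vanishes.

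The single genuinely non-formal point — and the step I would be most careful about — is the bookkeeping in this Grassmannian computation. As the paper stresses after Discussion~\ref{discussionbox:taut}, because $Q$ acts on the right one must apply Bott's algorithm to a reordered weight, so identifying $\calU_i=\calQ_i^{*}$ with the correct $\GL_n$-weight (and thereby confirming the $\rho$-shifted weight is singular, or equivalently confirming the cohomology of $\calR^{*}$ and the isomorphism on global sections) is exactly where a sign or an index error could slip in. Everything else — additivity of cohomology over direct summands, Lemma~\ref{lemma:pullback}, Proposition~\ref{proposition:removeUnnecParabolics}, and the long exact sequence — is routine.
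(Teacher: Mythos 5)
Your proposal is correct, and the reduction steps (additivity over direct summands, then Lemma~\ref{lemma:pullback} / Proposition~\ref{proposition:removeUnnecParabolics} to pass to the Grassmannian $\GL_n/P_{\widehat i}$) are exactly what the paper does. For the final step the paper applies Bott's algorithm directly to the weight $\alpha=(\underbrace{0,\ldots,0}_{n-i},1,\underbrace{0,\ldots,0}_{i-1})$ and observes that $\alpha+\rho$ has a repeated coordinate — precisely the ``alternative'' you mention at the end — so that branch of your argument coincides with the paper's. Your primary route, via the dualized tautological sequence $0\to\calQ_i^{*}\to(\complex^n)^{*}\otimes\strSh\to\calR^{*}\to 0$, is a genuinely different (if mildly so) packaging: it trades the singularity check on the weight of $\calQ_i^{*}$ for the Borel--Weil--Bott computation of $\homology^{\bullet}(\calR^{*})$ plus the observation that the edge map $(\complex^n)^{*}\to\homology^0(\calR^{*})$ is an isomorphism. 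Both inputs are standard and your justification of injectivity of the edge map (fibres of $\calR$ span $\complex^n$) together with the dimension count is sound, so the long exact sequence does force $\homology^j(\calQ_i^{*})=0$ in all degrees. The two approaches cost about the same; the exact-sequence version is perhaps more robust against the right-versus-left-action bookkeeping you flag as the risky step, since it only needs the cohomology of $\calR^{*}$ and the triviality of the middle term, whereas the paper's version requires reading off the correct $\rho$-shifted weight for $\calU_i$ under the paper's right-action convention.
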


\begin{proof}
We want to show that $\homology^j(\GL_n/Q, \calU_i)=0$ for every $r \leq i 
\leq n-1$ and for every
$j$.
By Lemma~\ref{lemma:pullback} (and keeping
Discussion~\ref{discussionbox:taut} in mind),
it suffices to show that $\homology^j(\Grass_{n-i,n},\calU_i) = 0$ for every 
$r \leq i \leq n-1$ and for every $j$. To this end, we apply the
Bott's algorithm~\cite[(4.1.5)]{WeymSyzygies03} to the weight \[
\alpha := (\underbrace{0, \ldots, 0}_{n-i}, 1, \underbrace{0, \ldots, 
0}_{i-1}).
\]
Note that there is a permutation $\sigma$ such that $\sigma \cdot \alpha =
\alpha$. The proposition now follows.
\end{proof}

\subsection*{An inductive approach}
We are looking for a way to compute $\homology^*(\GL_n/Q, \bigwedge^t \xi)$
for a homogeneous bundle \[
\xi = \bigoplus_{i \in A} \calU_i^{\oplus_{m_i}}
\]
where $A \subseteq \{r, \ldots, n-1\}$ and $m_i > 0$ for every $i \in A$.
Using Proposition~\ref{proposition:removeUnnecParabolics}, we assume that 
$Q = P_{\widehat{A}}$.
(Using Proposition~\ref{proposition:removeNonMultipleUi} below, we may 
further
assume that $m_i \geq 2$, but this is not necessary for the inductive
argument to work.)

Let $j$ be such that $Q
\subseteq P_{\widehat{j}}$ and $\calQ_j$ (equivalently $\calU_j$) be of
least dimension; in other words, $j$ is the smallest element of $A$.
If $Q = P_{\widehat{j}}$ (i.e., $|A| = 1$), then the $\bigwedge^t \xi$ is
completely reducible, and we may use the Borel-Weil-Bott theorem to compute
the cohomology groups.  Hence suppose that $Q \neq P_{\widehat{j}}$; write 
$Q = Q' \cap P_{\widehat{j}}$
non-trivially, with $Q'$ being a parabolic subgroup.
Consider the diagram
\[
\xymatrix{
\GL_n/Q \ar[r]^-{\mathrm{p}_2}  \ar[d]^{\mathrm{p}_1} &
\GL_n/P_{\widehat{j}}\\
\GL_n/{Q'} }
\]
Note that $\bigwedge^t\xi$ decomposes as a direct sum of bundles of the 
form $(p_1)^*\eta \otimes (p_2)^*
(\bigwedge^{t_1}\calU_j^{\oplus m_j})$ where $\eta$ is a homogeneous bundle 
on $\GL_n/Q'$. We must compute $\homology^*(\GL_n/Q, (p_1)^*\eta \otimes 
(p_2)^* (\bigwedge^{t_1}\calU_j^{\oplus m_j}))$.
Using the Leray spectral sequence and the projection formula, we can
compute this from $\homology^*(\GL_n/Q', \eta \otimes R^*(p_1)_*(p_2)^*
(\bigwedge^{t_1}\calU_j^{\oplus m_j}))$. Now 
$\bigwedge^{t_1}\calU_j^{\oplus m_j}$, in turn, decomposes as a direct sum
of $\Schur_\mu \calU_j$, so we must compute $\homology^*(\GL_n/Q', \eta 
\otimes R^*(p_1)_*(p_2)^* \Schur_\mu \calU_j)$.
The Leray spectral sequence and the projection formula respect the various
direct-sum decompositions mentioned above. It would follow from
Proposition~\ref{proposition:higherdirectimageSpecific} below that for each
$\mu$, at most one of the $R^p(p_1)_*(p_2)^* \Schur_\mu \calU_j$ is
non-zero, so the abutment of the spectral sequence is, in fact, an
equality.

\begin{proposition}
\label{proposition:higherdirectimageSpecific}
With notation as above, let $\theta$ be a homogeneous bundle on
$\GL_n/P_{\widehat{j}}$. Then $R^i{p_1}_* {p_2}^*\theta$ is the locally 
free
sheaf associated to the vector-bundle $\GL_n \times^{Q'} \homology^i(Q'/Q, 
{p_2}^*\theta|_{Q'/Q})$ over $\GL_n/Q'$.
\end{proposition}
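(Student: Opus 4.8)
The plan is to deduce this from Proposition~\ref{proposition:higherdirectimage}, after identifying $p_2^*\theta$ as the homogeneous bundle on $\GL_n/Q$ associated to the expected $Q$-module. First I would write $\theta = \GL_n \times^{P_{\widehat j}} F$, where $F$ is the fibre of $\theta$ over the base point $e P_{\widehat j}$, regarded as a $P_{\widehat j}$-module via the equivalence between homogeneous bundles on $\GL_n/P_{\widehat j}$ and finite-dimensional $P_{\widehat j}$-modules recalled in Section~\ref{sec:homogvb}. Since $p_2 : \GL_n/Q \to \GL_n/P_{\widehat j}$ is the natural projection, and since under that equivalence $p_2^*$ corresponds to the restriction functor $\mathrm{Res}^{P_{\widehat j}}_{Q}$ (this is exactly the identification used in the proof of Proposition~\ref{proposition:higherdirectimage}), we obtain $p_2^*\theta = \GL_n \times^{Q} E$ with $E := \mathrm{Res}^{P_{\widehat j}}_{Q} F$, the $Q$-module gotten by restricting the action along $Q = Q' \cap P_{\widehat j} \hookrightarrow P_{\widehat j}$.

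Next I would apply Proposition~\ref{proposition:higherdirectimage} with $Q_1 = Q$, $Q_2 = Q'$ (note $Q \subseteq Q'$ by construction) to the $Q$-module $E$. This immediately gives $R^i(p_1)_*\!\left(\GL_n \times^{Q} E\right) = \GL_n \times^{Q'} \homology^i\!\left(Q'/Q,\, (\GL_n \times^{Q} E)|_{Q'/Q}\right)$, where $Q'/Q$ is the fibre of $p_1$ over $eQ'$, sitting inside $\GL_n/Q$, and the restriction of $\GL_n\times^Q E$ to it is the $Q'$-homogeneous bundle $Q'\times^Q E$. It then remains only to observe that $(\GL_n \times^{Q} E)|_{Q'/Q} = (p_2^*\theta)|_{Q'/Q}$ as $Q'$-equivariant bundles on $Q'/Q$ (both are $Q'\times^Q E$, and the fibrewise identification is manifestly $Q'$-linear), which rewrites the formula in the stated form $R^i(p_1)_*(p_2^*\theta) = \GL_n \times^{Q'}\homology^i(Q'/Q,\, (p_2^*\theta)|_{Q'/Q})$; finite-dimensionality of the cohomology (so that the right-hand side is a genuine vector-bundle) is automatic since $Q'/Q$ is a projective homogeneous variety and the sheaf is coherent.

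The only real point requiring care is the bookkeeping of equivariant structures: that $(p_2^*\theta)|_{Q'/Q}$ genuinely carries a $Q'$-equivariant structure, so that $\homology^i(Q'/Q, -)$ is a $Q'$-module and $\GL_n \times^{Q'}(-)$ is defined, and that this $Q'$-module coincides with the one produced inside Proposition~\ref{proposition:higherdirectimage}. This is forced by the fact that $p_1$ is $\GL_n$-equivariant and $Q'/Q$ is its fibre over the $Q'$-fixed base point, but I would spell out that the intermediate object in Proposition~\ref{proposition:higherdirectimage} is precisely ``restrict the bundle to this fibre and take cohomology''. No genuinely new cohomology computation enters at this stage; the substantive work — evaluating $\homology^i(Q'/Q, (p_2^*\theta)|_{Q'/Q})$ when $\theta = \Schur_\mu \calU_j$, using that $Q'/Q$ is a Grassmannian (or product of Grassmannians) on which the restricted bundle is completely reducible, via Bott's algorithm — is carried out afterwards and is what makes the spectral-sequence argument preceding the proposition degenerate.
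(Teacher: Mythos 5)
Your proposal is correct and follows exactly the route the paper takes: the paper's entire proof reads ``Follows from Proposition~\ref{proposition:higherdirectimage},'' and what you have done is fill in the routine identifications (that $p_2^*$ corresponds to restriction of the $P_{\widehat j}$-module along $Q\hookrightarrow P_{\widehat j}$, that $Q\subseteq Q'$ so Proposition~\ref{proposition:higherdirectimage} applies with $Q_1=Q$, $Q_2=Q'$, and that the object whose $Q'$-module of cohomology one takes is precisely the restriction of $p_2^*\theta$ to the fibre $Q'/Q$) that the paper leaves to the reader.
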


\begin{proof}
Follows from Proposition~\ref{proposition:higherdirectimage}.
\end{proof}

We hence want to determine the cohomology of the restriction of $\Schur_\mu
\calU_j$ on $Q'/Q$. It follows from the definition of $j$ that
$Q'/Q$ is a Grassmannian whose tautological quotient bundle and its dual 
are, respectively,  $\calQ_j|_{Q'/Q}$ and $\calU_j|_{Q'/Q}$.
We can therefore compute  $\homology^i(Q'/Q, \Schur_\mu
\calU_j|_{Q'/Q})$ using the Borel-Weil-Bott theorem.

\begin{example}
Suppose that $n=6$ and that $Q = P_{\widehat{\{2,4\}}}$. Then we have the
diagram
\[
\xymatrix{%
\GL_6/Q \ar[r]^-{\mathrm{p}_2}  \ar[d]^{\mathrm{p}_1} &
\GL_6/P_{\widehat{2}}\\
\GL_6/{P_{\widehat{4}}} }
\]
The fibre of $p_1$ is isomorphic to $P_{\widehat{4}}/Q$ which is a 
Grassmannian
of two-dimensional subspaces of a four-dimensional vector-space. Let $\mu =
(\mu_1, \mu_2)$ be a weight. Then we can compute the cohomology groups
$\homology^*(P_{\widehat{4}}/Q, \Schur_\mu \calU_2|_{P_{\widehat{4}}/Q})$
applying the Borel-Weil-Bott theorem~\cite[(4.1.5)]{WeymSyzygies03}
to the sequence $(0,0,\mu_1,\mu_2)$.  Note that
$\homology^*(P_{\widehat{4}}/Q, \Schur_\mu \calU_2|_{P_{\widehat{4}}/Q})$
is, if it is non-zero, $\Schur_\lambda W$ where $W$ is a four-dimensional 
vector-space that
is the fibre of the dual of the tautological quotient bundle of
$\GL_4/P_{\widehat{4}}$ and $\lambda$ is a partition with at most four
parts. Hence, by
Proposition~\ref{proposition:higherdirectimageSpecific}, we see that 
$R^i(p_1)_*(p_2)^* \Schur_\mu \calU_2$ is, if it is non-zero,
$\Schur_\lambda \calU_4$ on $\GL_6/P_{\widehat{4}}$.
\end{example}

We summarize the above discussion as a theorem:

\begin{theorem}
\label{theorem:ourtheorem}
For $w \in \calW_r$ the modules in the free resolution of
$\complex[Y_P(w)]$ given in Theorem~\ref{theorem:stepone}
can be computed.
\end{theorem}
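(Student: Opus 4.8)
The plan is to assemble the pieces already in place into a single algorithm. By Theorem~\ref{theorem:stepone}, for $w \in \calW_r$ the free resolution of $\complex[Y_P(w)]$ has $i$-th term $\bigoplus_{j\ge 0} \homology^j(\GL_n/Q_r, \bigwedge^{i+j}\calU_w) \otimes_\complex R(-i-j)$, so it suffices to show that each cohomology group $\homology^j(\GL_n/Q_r, \bigwedge^t\calU_w)$ is computable. By Discussion~\ref{discussionbox:AssocModToVw}, $\calU_w = \calQ_w^* = \bigoplus_{j=1}^m \calU_{i_j}$, which is exactly a bundle $\xi = \bigoplus_{i=r}^{n-1}\calU_i^{m_i}$ of the form considered in Setup~\ref{setup:compute} (here $m_i = \#\{j : i_j = i\}$, and $\sum m_i = m$). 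So I would phrase the proof as: it is enough to compute $\homology^*(\GL_n/Q_r, \bigwedge^t\xi)$ for such $\xi$, and this is what the inductive procedure in this section accomplishes.

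First I would invoke Proposition~\ref{proposition:removeUnnecParabolics} to replace $Q_r$ by $Q = P_{\widehat A}$ where $A = \{i : m_i > 0\}$, without changing the cohomology. Then I would run the induction on $|A|$. The base case $|A| = 1$ gives $Q = P_{\widehat j}$ a maximal parabolic, so $\GL_n/Q$ is a Grassmannian and $\bigwedge^t\xi = \bigwedge^t(\calU_j^{m_j})$ is completely reducible; decomposing it into a direct sum of $\Schur_\mu\calU_j$ via the Cauchy/plethysm formulas and applying the Borel-Weil-Bott theorem (Bott's algorithm, \cite[(4.1.5)]{WeymSyzygies03}, remembering the right-action caveat from the discussion after Discussion~\ref{discussionbox:taut}) computes everything. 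For the inductive step, take $j = \min A$, write $Q = Q' \cap P_{\widehat j}$ nontrivially, and consider the two projections $p_1 : \GL_n/Q \to \GL_n/Q'$ and $p_2 : \GL_n/Q \to \GL_n/P_{\widehat j}$. Decompose $\bigwedge^t\xi$ into summands $p_1^*\eta \otimes p_2^*\Schur_\mu\calU_j$ with $\eta$ homogeneous on $\GL_n/Q'$; by the projection formula and the Leray spectral sequence, $\homology^*(\GL_n/Q, p_1^*\eta\otimes p_2^*\Schur_\mu\calU_j)$ is computed from $\homology^*(\GL_n/Q', \eta\otimes R^*(p_1)_*p_2^*\Schur_\mu\calU_j)$. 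By Proposition~\ref{proposition:higherdirectimageSpecific}, $R^i(p_1)_*p_2^*\Schur_\mu\calU_j$ is the homogeneous bundle associated to $\homology^i(Q'/Q, \Schur_\mu\calU_j|_{Q'/Q})$, and since $Q'/Q$ is a Grassmannian with tautological quotient $\calQ_j|_{Q'/Q}$, this last group is computable by Borel-Weil-Bott. Crucially, Borel-Weil-Bott produces cohomology concentrated in a single degree, so at most one $R^i(p_1)_*$ is nonzero for each $\mu$; hence the Leray spectral sequence degenerates and the "computed from" becomes an honest equality. Each such $R^i(p_1)_*p_2^*\Schur_\mu\calU_j$ is again a homogeneous bundle on $\GL_n/Q'$, which is built from the same kind of data with $|A|$ decreased by one, so the induction proceeds and terminates.

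The main obstacle, such as it is, is bookkeeping rather than a genuine mathematical difficulty: one must check that all the direct-sum decompositions (of $\bigwedge^t\xi$ into $p_1^*\eta \otimes p_2^*\Schur_\mu\calU_j$, of $\bigwedge^{t_1}\calU_j^{m_j}$ into $\Schur_\mu\calU_j$, and of the higher direct images) are compatible with the projection formula and the Leray spectral sequence, so that the inductive hypothesis really applies termwise, and that the bundle $\eta\otimes R^i(p_1)_*p_2^*\Schur_\mu\calU_j$ on $\GL_n/Q'$ is still built as a direct sum of bundles pulled back from the tautological bundles on the relevant Grassmannian quotients (so that the Setup~\ref{setup:compute} framework is preserved). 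All of this is spelled out in the discussion preceding Proposition~\ref{proposition:higherdirectimageSpecific}, so the proof of the theorem is essentially a one-line appeal: combine Theorem~\ref{theorem:stepone}, Discussion~\ref{discussionbox:AssocModToVw}, Proposition~\ref{proposition:removeUnnecParabolics}, Proposition~\ref{proposition:higherdirectimageSpecific}, and the Borel-Weil-Bott theorem, running the induction on $|A| = |\{i : m_i > 0\}|$. I would write it out at roughly that level of detail, pointing to the preceding discussion for the verification that the spectral sequences degenerate.
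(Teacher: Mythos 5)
Your proposal is correct and is essentially the paper's own argument: the theorem is stated in the paper as a summary of the immediately preceding discussion (the inductive approach), and you have reproduced that discussion faithfully — reduce to $Q = P_{\widehat{A}}$ via Proposition~\ref{proposition:removeUnnecParabolics}, induct on $|A|$ by projecting away from the factor $\GL_n/P_{\widehat{j}}$ with $j=\min A$, use Proposition~\ref{proposition:higherdirectimageSpecific} plus Borel--Weil--Bott on the Grassmannian fibre $Q'/Q$, and note that the single-degree concentration from Bott's algorithm forces the Leray spectral sequence to collapse so the formula is an equality. The one place you might sharpen the wording is the phrase "built from the same kind of data with $|A|$ decreased by one": after one step the relevant bundle on $\GL_n/Q'$ is no longer literally of the form $\bigoplus_i \calU_i^{\oplus m_i}$ as in Setup~\ref{setup:compute}, but rather a direct sum of tensor products of Schur functors of the $\calU_i$ with $i$ ranging over the smaller index set $A\setminus\{j\}$; the induction should be formulated for this slightly broader class (as the examples in the paper implicitly do), though this is exactly the bookkeeping you flag, and it does not affect correctness.
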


We end this section with some observations.

\begin{proposition}
\label{proposition:removeNonMultipleUi}
Suppose that there exists $i$ such that $r+1 \leq i \leq n-1$ and such
that $\xi$ contains exactly one copy of $\calU_i$ as a direct summand. 
Let  $\xi' =
\calU_{i-1} \oplus \bigoplus_{\substack{j=1 \\ i_j \neq i}}^m \calU_{i_j}.
$
Then
$\homology^*(\GL_n/Q, \bigwedge^t\xi) = \homology^*(\GL_n/Q, 
\bigwedge^t\xi')$ for every $t$.
\end{proposition}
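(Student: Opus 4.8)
The plan is to strip the single copy of $\calU_i$ off $\xi$ using the two-step filtration of $\bigwedge^{\bullet}$ of an extension by a line bundle, and then to show that the ``correction term'' that appears is acyclic because it is the pull-back of a line bundle that restricts to $\strSh(-1)$ on the fibres of a projective-space bundle.

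First I would normalize the set-up. By Proposition~\ref{proposition:removeUnnecParabolics}, applied to $\xi$ and (in the evident sense) to $\xi'$, I may assume $Q = P_{\widehat A}$ with $A$ the set of indices occurring in $\xi$, so $i \in A$; and since $\GL_n/(P_{\widehat A}\cap P_{\widehat{i-1}}) \to \GL_n/P_{\widehat A}$ is a fibration with flag-variety fibres, Lemma~\ref{lemma:pullback} lets me pass further to $\overline{Q} := P_{\widehat{A\cup\{i-1\}}}$, on which $\calU_i$, $\calU_{i-1}$ and $\eta$ are all defined. Write $\xi = \calU_i \oplus \eta$, with $\eta$ the direct sum of the bundles $\calU_j$, $j\ne i$, appearing in $\xi$ (here I use that $\xi$ contains \emph{exactly one} copy of $\calU_i$). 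Expanding $\bigwedge^t(\calU_i\oplus\eta) = \bigoplus_{a}\bigwedge^a\calU_i \otimes \bigwedge^{t-a}\eta$, and likewise for $\xi'$, the statement reduces term by term to the equality $\homology^*(\GL_n/\overline{Q}, \bigwedge^a\calU_i \otimes \bigwedge^{t-a}\eta) = \homology^*(\GL_n/\overline{Q}, \bigwedge^a\calU_{i-1} \otimes \bigwedge^{t-a}\eta)$ for each $a$.

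Next, the filtration step. On $\GL_n/\overline{Q}$ the tautological flag sequence gives $0 \to \mathcal{L} \to \calQ_i \to \calQ_{i-1} \to 0$ with $\mathcal{L} := \calR_{n-i+1}/\calR_{n-i}$ a line bundle; dualizing, $0 \to \calU_{i-1} \to \calU_i \to \mathcal{L}^* \to 0$. As $\mathcal{L}^*$ has rank one, $\bigwedge^a\calU_i$ carries a two-step filtration with sub-bundle $\bigwedge^a\calU_{i-1}$ and quotient $\bigwedge^{a-1}\calU_{i-1}\otimes\mathcal{L}^*$; tensoring with $\bigwedge^{t-a}\eta$,
\[
0 \to \bigwedge\nolimits^a\calU_{i-1}\otimes\bigwedge\nolimits^{t-a}\eta \to \bigwedge\nolimits^a\calU_i\otimes\bigwedge\nolimits^{t-a}\eta \to \mathcal{L}^*\otimes\bigwedge\nolimits^{a-1}\calU_{i-1}\otimes\bigwedge\nolimits^{t-a}\eta \to 0.
\]
So the per-$a$ equality follows once I show the right-hand term has vanishing cohomology in every degree. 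For that, observe that $\bigwedge^{a-1}\calU_{i-1}\otimes\bigwedge^{t-a}\eta$ is built only from tautological bundles pulled back from $\GL_n/Q'$, where $Q' := P_{\widehat{(A\setminus\{i\})\cup\{i-1\}}}$ is obtained from $\overline{Q}$ by dropping the $\calR_{n-i}$-step --- and this is exactly where ``exactly one copy of $\calU_i$'' is used, since otherwise $\calU_i$-factors would survive and obstruct the descent. Hence the term is $\mathcal{L}^*\otimes\rho^*M$ for the projection $\rho : \GL_n/\overline{Q}\to\GL_n/Q'$ and a bundle $M$ on $\GL_n/Q'$; by the projection formula and the Leray spectral sequence its cohomology is computed from that of $M\otimes R^{\bullet}\rho_*\mathcal{L}^*$, so it suffices to prove $R^{\bullet}\rho_*\mathcal{L}^* = 0$.

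Finally, the fibre of $\rho$ parametrizes the subspace underlying $\calR_{n-i}$ inside a flag of $\GL_n/Q'$: since $\calR_{n-i}$ and $\calR_{n-i+1}$ have consecutive ranks and no step of $Q'$ has rank $n-i$, the $Q'$-step immediately below $\calR_{n-i+1}$ has rank at most $n-i-1$, so the fibre is the space of hyperplanes in a bundle of rank $\ge 2$, i.e.\ a $\mathbb{P}^s$ with $s\ge 1$ (the hypothesis $i\le n-1$ giving $n-i\ge 1$). On this $\mathbb{P}^s$, $\mathcal{L}=\calR_{n-i+1}/\calR_{n-i}$ restricts to the tautological quotient line bundle $\strSh_{\mathbb{P}^s}(1)$, so $\mathcal{L}^*$ restricts to $\strSh_{\mathbb{P}^s}(-1)$, which is acyclic for $s\ge 1$; by Grauert's theorem, as in the proof of Lemma~\ref{lemma:pullback}, this forces $R^{\bullet}\rho_*\mathcal{L}^* = 0$. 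The delicate points are all here: that the correction term genuinely descends to $\GL_n/Q'$ (false without the ``exactly one'' hypothesis), that the fibre of $\rho$ is positive-dimensional (where $i\le n-1$ enters; the role of $i\ge r+1$ is only to keep $\calU_{i-1}$ within the range of Set-up~\ref{setup:compute}), and that $\mathcal{L}$ restricts to $\strSh(+1)$ --- hence $\mathcal{L}^*$ to $\strSh(-1)$ --- on the fibre, which is what makes the term acyclic rather than merely globally generated. The rest (Leray, the projection formula, the exterior-power filtration) is routine.
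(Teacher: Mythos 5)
Your argument is correct, and it is essentially the paper's argument. The paper also reduces to the short exact sequence $0 \to \calU_{i-1} \to \calU_i \to (\calR_{n-i+1}/\calR_{n-i})^* \to 0$ (presented there as $\calU_i/\calU_{i-1} \simeq L_{\omega_{i-1}-\omega_i}$), applies the exterior-power filtration to conclude that the difference of cohomologies is controlled by a term of the form $\mathcal{L}^*\otimes(\text{pullback from }\GL_n/Q')$, and kills this term by observing that $\mathcal{L}^*$ restricts to $\strSh(-1)$ on the fibres of $\GL_n/Q \to \GL_n/Q'$ and invoking Grauert and Leray. The only cosmetic differences are (i) the paper applies the filtration to the whole of $\bigwedge^t\xi$ at once, obtaining $0\to\bigwedge^t\xi'\to\bigwedge^t\xi\to\bigwedge^{t-1}\xi'\otimes L_{\omega_{i-1}-\omega_i}\to 0$, whereas you first split off $\calU_i$ as a summand and filter each $\bigwedge^a\calU_i$ individually — the same thing after expanding by summands; and (ii) the paper keeps $Q=Q_r$ and projects away only $P_{\widehat{i}}$, so its fibres are exactly $\projective^1$, whereas you first shrink $Q$ via Proposition~\ref{proposition:removeUnnecParabolics} and so land on $\projective^s$ fibres with $s\ge 1$; both give the needed acyclicity of $\strSh(-1)$.
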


\begin{proof}
Note that $\xi'$ is a sub-bundle of $\xi$ with quotient
$\calU_{i}/\calU_{i-1}$. We claim that $\calU_{i}/\calU_{i-1} \simeq 
L_{\omega_{i-1} - \omega_{i}}$, where
for $1 \leq j \leq n$, $\omega_j$ is the $j$th fundamental weight.
Assume the claim. Then we have an exact sequence
\[
0 \to \bigwedge^t \xi'  \to \bigwedge^t \xi \to \bigwedge^{t-1} \xi' 
\otimes L_{\omega_{i-1} - \omega_{i}} \to 0
\]
Let $Q' = \bigcap\limits_{\substack{r \leq l \leq n-1 \\ l \neq i}}
P_{\hat l}$; then $Q = Q' \cap P_{\widehat{i}}$.
Let $p : \GL_n/Q \to \GL_n/Q'$ be the natural projection; its fibres are
isomorphic to $Q'/Q \simeq \GL_2/B_N \simeq \projective^1$. Note that 
$\bigwedge^{t-1} \xi' \otimes L_{\omega_{i-1}}$ is the pull-back along $p$
of some vector-bundle on $\GL_n/Q'$; hence it is constant on the fibres of
$p$. 

On the other hand, $L_{\omega_{i}}$ is the ample line bundle on 
$\GL_n/P_{\widehat{i}}$ that generates its Picard group, so 
$L_{-\omega_{i}}$ restricted to any fibre of $p$ is $\strSh(-1)$. Hence 
$\bigwedge^{t-1} \xi' \otimes L_{\omega_{i-1} - \omega_{i}}$ on any
fibre of $p$ is a direct sum of copies of $\strSh(-1)$ and hence it has no
cohomology. By Grauert's theorem~\cite[III.12.9]{HartAG},
$R^ip_*(\bigwedge^{t-1} \xi' \otimes L_{\omega_{i-1} - \omega_{i}}) =
0$ for every $i$, so, using the Leray spectral sequence, we conclude that 
$\homology^*(\GL_n/Q, \bigwedge^{t-1} \xi' \otimes L_{\omega_{i-1} -
\omega_{i}})=0$. This gives the proposition.

Now to prove the claim, note that $\calU_{i}/\calU_{i-1} \simeq 
\left(\calR_{n-i+1}/\calR_{n-i}\right)^*$.
Let $e_1, \ldots, e_n$ be a basis for $\complex^n$ such that the subspace
spanned by $e_i, \ldots, e_n$ is $B_N$-stable for every $1 \leq i \leq n$.
(Recall that we take the right action of $B_N$ on $\complex^n$.) Hence 
$\calR_{n-i+1}/\calR_{n-i}$ is the invertible sheaf on
which $B_N$ acts through the character $\omega_i-\omega_{i-1}$, which implies
the claim.
\end{proof}

\begin{remarkbox}[Determinantal case]
\label{remarkbox:determinantal}
Recall (see the paragraph after Definition~\ref{definition:ClassWk})
that $Y_P(w) = D_k$ if $w = (k+1, \ldots, n, N-k+1,
\ldots N) \in \calW_{n-k}$. In this case, $\calU_w = \calU_{n-k}^{\oplus 
(m-k+1)} \oplus
\bigoplus_{i=n-k+1}^{n-1} \calU_i$. Therefore
\[
\homology^*(\GL_n/Q_{n-k}, \bigwedge^*\xi) = \homology^*(\GL_n/Q_{n-k}, 
\bigwedge^* \calU_{n-k}^{\oplus m}) = \homology^*(\GL_n/P_{\widehat{n-k}}, 
\bigwedge^* \calU_{n-k}^{\oplus m})
\]
where the first equality comes from a repeated application of 
Proposition~\ref{proposition:removeNonMultipleUi} and the second one
follows by Lemma~\ref{lemma:pullback}, applied to the natural map $f : 
\GL_n/Q \to
\GL_n/P_{\widehat{n-k}}$.
Hence our approach recovers Lascoux's resolution of the determinantal
ideal~\cite{LascSyzVarDet78}; see also~\cite[Chapter~6]{WeymSyzygies03}.
\end{remarkbox}

\section{Examples}
\label{sec:examples}

We illustrate our approach with two examples. Firstly, we compute the
resolution of a determinantal variety using the inductive method from the
last section.

\begin{example}[$n \times m$ matrices of rank $\leq k$]
\label{example:4x3maximalminors}

If $k = 1$, then $w = (2, \ldots, n, n+m)$, and, hence, $\xi = 
\calU_{n-1}^{\oplus m}$.  Since this would not illustrate the inductive 
argument, let us take $k=2$.

Consider the ideal generated by the $3 \times 3$ minors of a $4 \times 3$
matrix of indeterminates. It is generated by four cubics, which have a
linear relation. Hence minimal free resolution of the quotient ring looks
like
\begin{equation}
\label{equation:4x3maximalminors}
0 \to R(-4)^{\oplus 3} \to R(-3)^{\oplus 4}  \to R \to 0.
\end{equation}

Note that $w  = (3,4,6,7)$ and $\xi = \calU_2^{\oplus 2} \bigoplus 
\calU_3$.
Write $G = \GL_4$ and $Q = P_{\widehat{2,3}}$.  Then $j=2$, $Q' = 
P_{\widehat{3}}$ and $Q'/Q \simeq \GL_3/P_{\widehat{2}}
\simeq \projective^2$.
Now there is a decomposition
\[
\bigwedge^t \xi = \bigoplus_{|\mu| \leq t} \Schur_{\mu'} \complex^2 \otimes
\Schur_{\mu} \calU_2 \otimes \bigwedge^{t-|\mu|} \calU_3
\]
Hence we need to consider only $\mu = (\mu_1, \mu_2) \leq (2,2)$.  On
$Q'/Q \simeq \GL_3/P_{\widehat{2}}$, we would apply
the Borel-Weil-Bott theorem~\cite[(4.1.5)]{WeymSyzygies03}
to the weight $(0,\mu_1, \mu_2)$ to compute the
cohomology of $\Schur_\mu \calU_j$. Thus we see that we need to consider 
only $\mu = (0,0)$, $\mu = (2,0)$ and $\mu = (2,1)$. From this, we
conclude that
\[
R^i(p_1)_*(p_2)^* (\Schur_{\mu'} \complex^2 \otimes \Schur_\mu \calU_2) = 
\begin{cases}
\strSh_{G/P_{\widehat{3}}}, & \text{if}\; i=0 \;\text{and}\; \mu=(0,0); \\
\bigwedge^2\calU_3, & \text{if}\; i=1 \;\text{and}\; \mu=(2,0);\\
(\bigwedge^3\calU_3)^{\oplus 2}, & \text{if}\; i=1 \;\text{and}\;
\mu=(2,1); \\
0, & \text{otherwise}.
\end{cases}
\]
We have to compute the cohomology groups of $(R^i(p_1)_*(p_2)^* 
(\Schur_{\mu'} \complex^2 \otimes \Schur_\mu \calU_2))
\otimes \bigwedge^{t-|\mu|}\calU_3$ on $G/P_{\widehat{3}}$. Now,
$\homology^*(G/P_{\widehat{3}},\bigwedge^i\calU_3) = 0$ for every $i>0$.
Further \begin{align*}
\bigwedge^2 \calU_3 \otimes \calU_3 & \simeq \bigwedge^3 \calU_3  \oplus 
\Schur_{2,1} \calU_3 && \text{for}\; \mu =(2,0) \;\text{and}\; t=3
\\
\bigwedge^2 \calU_3 \otimes \bigwedge^2 \calU_3 & \simeq \Schur_{2,1,1} 
\calU_3  \oplus \Schur_{2,2} \calU_3 && \text{for}\; \mu =(2,0) 
\;\text{and}\; t=4
\\
\bigwedge^2 \calU_3 \otimes \bigwedge^3 \calU_3 & \simeq \Schur_{2,2,1} 
\calU_3  && \text{for}\; (\mu =(2,0) \;\text{or}\; \mu=(2,1))
\;\text{and}\; t=5
\\
\bigwedge^3 \calU_3 \otimes \calU_3 & \simeq \Schur_{2,1,1} \calU_3 && 
\text{for}\; \mu =(2,1) \;\text{and}\; t=4
\\
\bigwedge^3 \calU_3 \otimes \bigwedge^3 \calU_3 & \simeq \Schur_{2,2,2} 
\calU_3
&& \text{for}\; \mu =(2,1) \;\text{and}\; t=6
\\
\end{align*}
Again, by applying the Borel-Weil-Bott 
theorem~\cite[(4.1.5)]{WeymSyzygies03} for $G/ P_{\widehat{3}}$, we
see that $\Schur_{2,2} \calU_3$, $\Schur_{2,2,1} \calU_3$ and 
$\Schur_{2,2,2} \calU_3$ have no cohomology. Therefore we conclude that
\[
\homology^j(G/Q, \bigwedge^t\xi) = \begin{cases}
\bigwedge^0\complex^{\oplus 4}, & \text{if}\; t=0 \;\text{and}\; j=0\\
\bigwedge^3\complex^{\oplus 4}, & \text{if}\; t=3 \;\text{and}\; j=2\\
(\bigwedge^4\complex^{\oplus 4})^{\oplus 3}, & \text{if}\; t=4 
\;\text{and}\; j=2\\
0, & \text{otherwise}.
\end{cases}
\]
These ranks agree with the expected ranks
from~\eqref{equation:4x3maximalminors}.
\end{example}

\begin{example}
\label{example:twoparabolics}
Let $n=6$, $m=6$, $k=4$ and $w=(5,6,8,9, 11,12)$. For this,
$Q = P_{\widehat{\{2,\cdots,5\}}}$ and $\calU_w = \calU_{2}^{\oplus 2} 
\oplus \calU_3 \oplus \calU_4^{\oplus 2}
\oplus \calU_5$.
After applying Propositions~\ref{proposition:removeUnnecParabolics} 
and~\ref{proposition:removeNonMultipleUi}, we reduce
to the situation  $Q = P_{\widehat{\{2,4\}}}$ and
$\xi = \calU_{2}^{\oplus 3} \oplus \calU_4^{\oplus 3}$.  Write $\xi = 
(\complex^3 \otimes_\complex \calU_2) \oplus (\complex^3\oplus \calU_4)$. 
Now we
project away from $\GL_6/P_{\widehat{2}}$.
\[
\xymatrix{%
\GL_6/Q \ar[r]^-{\mathrm{p}_2}  \ar[d]^{\mathrm{p}_1} &
\GL_6/P_{\widehat{2}}\\
\GL_6/{P_{\widehat{4}}} }
\]

The fibre of $p_1$ is isomorphic to $P_{\widehat{4}}/Q$ which is a 
Grassmannian
of two-dimensional subspaces of a four-dimensional vector-space.
We use the spectral sequence \begin{equation}
\label{equation:projectionss}
\homology^j(G/P_{\widehat{4}}, R^i{p_1}_*\bigwedge^t\xi) \Rightarrow 
\homology^{i+j}(G/Q, \bigwedge^t\xi).
\end{equation}

Observe that $\bigwedge^t \xi = \bigoplus_{t_1} \bigwedge^{t_1}  
(\complex^3 \otimes_\complex \calU_2)
\otimes \bigwedge^{t-t_1}  (\complex^3 \otimes_\complex \calU_4)$; the
above spectral sequence respects this decomposition. Further, using the
projection formula, we see that we need to compute
\[
\homology^j(G/P_{\widehat{4}},
(R^i{p_1}_* \bigwedge^{t_1} (\complex^3 \otimes_\complex \calU_2)) \otimes 
\bigwedge^{t-t_1}  (\complex^3 \otimes_\complex \calU_4)).
\]
Now, $R^i{p_1}_* \bigwedge^{t_1} (\complex^3 \otimes_\complex \calU_2)$ is 
the
vector-bundle associated to the $P_{\widehat{4}}$-module 
$\homology^i(P_{\widehat{4}}/Q, \bigwedge^{t_1} (\complex^3
\otimes_\complex \calU_2)|_{P_{\widehat{4}}/Q}) = 
\homology^i(P_{\widehat{4}}/Q, \bigwedge^{t_1} (\complex^3
\otimes_\complex \calU_2|_{P_{\widehat{4}}/Q}))$. Note that 
$\calU_2|_{P_{\widehat{4}}/Q}$ is the dual of the tautological quotient
bundle of $P_{\widehat{4}}/Q \simeq \GL_4/P_{\widehat{2}}$; we denote this
also, by abuse of notation, by $\calU_2$. Note, further, that 
$\bigwedge^{t_1} (\complex^3 \otimes_\complex \calU_2) = \bigoplus_{\mu 
\vdash t_1} \Schur_{\mu'} \complex^3 \otimes \Schur_{\mu}
\calU_2$. We need only consider $\mu \leq (3,3)$.
From the Borel-Weil-Bott theorem~\cite[(4.1.5)]{WeymSyzygies03}, it follows 
that
\[
\homology^i(P_{\widehat{4}}/Q, \Schur_{\mu} \calU_2) = \begin{cases}
\bigwedge^0(\complex^{\oplus^4}), & \text{if}\; i=0 \;\text{and}\;
\mu=(0,0);
\\
\bigwedge^3(\complex^{\oplus^4}), & \text{if}\; i=2 \;\text{and}\; 
\mu=(3,0);
\\
\bigwedge^4(\complex^{\oplus^4}), & \text{if}\; i=2 \;\text{and}\;
\mu=(3,1); \\
0, & \text{otherwise}.
\end{cases}
\]
Therefore we conclude that \[
R^i{p_1}_* \bigwedge^{t_1} (\complex^3 \otimes_\complex \calU_2) = 
\begin{cases}
\strSh_{\GL_4/P_{\widehat{2}}}, & \text{if}\; i=0 \;\text{and}\; t_1=0;
\\
\bigwedge^3\calU_4, & \text{if}\; i=2 \;\text{and}\; t_1=3; \\
(\bigwedge^4\calU_4)^{\oplus 3}, & \text{if}\; i=2 \;\text{and}\; t_1=4; \\
0, & \text{otherwise}.
\end{cases}
\]
Therefore for each pair $(t,t_1)$ at most one column of the summand of the
spectral sequence~\eqref{equation:projectionss} is non-zero; hence the 
abutment
in~\eqref{equation:projectionss} is in fact an equality.

Fix a pair $(t,t_1)$ and an integer $l$. Then we have
\begin{multline*}
\homology^{l}(G/Q, \bigwedge^t\xi) = \homology^{l}(G/P_{\widehat{4}}, 
\bigwedge^{t}  (\complex^3 \otimes \calU_4)) \oplus 
\homology^{l-2}(G/P_{\widehat{4}}, \bigwedge^3\calU_4 \otimes 
\bigwedge^{t-3}  (\complex^3 \otimes \calU_4))\\
 \oplus \homology^{l-2}(G/P_{\widehat{4}}, (\bigwedge^4\calU_4)^{\oplus 3} 
 \otimes \bigwedge^{t-4}  (\complex^3 \otimes \calU_4)).
\end{multline*}
Write $h^i(-) = \dim_\complex \homology^i(-)$.
Note that $\bigwedge^{t}  (\complex^3 \otimes \calU_4)
\simeq \bigoplus_{\lambda \vdash t} \Schur_{\lambda'} \complex^3 \otimes 
\Schur_{\lambda} \calU_4$, by the Cauchy formula.  Write $d_{\mu'} = 
\dim_\complex
\Schur_{\mu'}\complex^{\oplus3}$. Thus, from the above
equation, we see, that for every $l$ and for every
$t$,
\begin{equation}
\label{equation:cohDecMainExample}
h^l(\wedge^t \xi) = \sum_{\mu \vdash t} d_{\mu'} h^l(\Schur_{\mu} \calU_4)
+ \sum_{\mu \vdash t-3} d_{\mu'} h^{l-2}(\wedge^3 \calU_4 \otimes 
\Schur_{\mu} \calU_4)
+ 3\sum_{\mu \vdash t-4} d_{\mu'} h^{l-2}(\wedge^4 \calU_4 
\otimes\Schur_{\mu} \calU_4)
\end{equation}
(Here the cohomology is calculated over $\GL_6/Q$ on the left-hand-side and
over $\GL_6/P_{\widehat{4}}$ on the right-hand-side.) 
For any $\mu$, if $d_{\mu'} \neq 0$, then $\mu_1 \leq 3$.
Any $\mu$ that contributes a non-zero integer to the right-hand-side 
of~\eqref{equation:cohDecMainExample} has at
most four parts and $m_1 \leq 3$.
Further, if $\Schur_{\lambda} \calU_4$ is an irreducible summand of a
representation on the right-hand-side
of~\eqref{equation:cohDecMainExample} with non-zero cohomology, then
$\lambda$ has at most four
parts and is such that $\lambda_1 \leq 4$. Therefore for $\lambda \leq
(4,4,4,4)$, we compute the cohomology using
the Borel-Weil-Borel theorem:
\[
\homology^{i}(G/P_{\widehat{4}}, \Schur_{\lambda} \calU_4) = \begin{cases}
\wedge^0(\complex^{\oplus 6}), & \text{if}\; i=0 \;\text{and}\; \lambda =0;
\\
\Schur_{(\lambda_1-2,1,1,\lambda_2,\lambda_3, \lambda_4)}(\complex^{\oplus 
6}),
& \text{if}\; i=2,
\lambda_1 \in \{3,4\} \;\text{and}\; (\lambda_2,\lambda_3, \lambda_4) \leq 
(1,1,1); \\

\Schur_{(2,2,2,2,\lambda_3, \lambda_4)}(\complex^{\oplus 6}),
& \text{if}\; i=4,
\lambda_1 = \lambda_2 =4 \;\text{and}\; (\lambda_3, \lambda_4) \leq (2,2); 
\\
0, & \text{otherwise}.
\end{cases}
\]
We put these together to compute $h^l(\wedge^t\xi)$;
the result is listed in Table~\ref{table:mainexample}. From this we get the
following resolution:
\[
\xymatrix@C=5mm{
0 \ar[r] & R(-12)^{26} \ar[r]  & R(-11)^{108} \ar[r]  &
{\def\arraystretch{0.1}
\begin{matrix} R(-6)^{10} \\ \oplus \\ R(-10)^{153} \end{matrix}}
\ar[r] &
{\def\arraystretch{0.1}
\begin{matrix} R(-5)^{36} \\ \oplus \\ R(-7)^{36} \\ \oplus \\ R(-9)^{70} 
\end{matrix}}
\ar[r] &
{\def\arraystretch{0.1}
\begin{matrix} R(-3)^{45} \\ \oplus \\ R(-5)^{53} \end{matrix}}
\ar[r] &
{\def\arraystretch{0.1}
\begin{matrix} R(-2)^{20} \\ \oplus \\ R(-4)^{18} \end{matrix}}
\ar[r] & R \ar[r]& 0.
}
\]
Note, indeed, that $\dim Y_{Q}(w) = \dim X_{Q}(w) = 4+4+5+5+6+6 = 30$
and that $\dim O^\mhyphen_{\GL_N/P} = 6\cdot 6 = 36$, so the codimension is $6$. 
Since
the variety is Cohen-Macaulay, the length of a minimal free resolution is
$6$.

\begin{table}
\begin{tabular}{|c|c|c|c|c|c|c|c|}
\hline
$t$ & $h^0(\wedge^t\xi)$ & $h^1(\wedge^t\xi)$ & $h^2(\wedge^t\xi)$ & 
$h^3(\wedge^t\xi)$ & $h^4(\wedge^t\xi)$ & $h^5(\wedge^t\xi)$ & 
$h^6(\wedge^t\xi)$ \\
\hline
0 & 1 & 0 & 0 & 0 & 0 & 0 & 0 \\
\hline
1 & 0 & 0 & 0 & 0 & 0 & 0 & 0 \\
\hline
2 & 0 & 0 & 0 & 0 & 0 & 0 & 0 \\
\hline
3 & 0 & 0 & 20 & 0 & 0 & 0 & 0 \\
\hline
4 & 0 & 0 & 45 & 0 & 0 & 0 & 0 \\
\hline
5 & 0 & 0 & 36 & 0 & 18 & 0 & 0 \\
\hline
6 & 0 & 0 & 10 & 0 & 53 & 0 & 0 \\
\hline
7 & 0 & 0 & 0 & 0 & 36 &  0 & 0 \\
\hline
8 & 0 & 0 & 0 & 0 & 0 & 0 & 0 \\
\hline
9 & 0 & 0 & 0 & 0 & 0 & 0& 70  \\
\hline
10 & 0 & 0 & 0 & 0 & 0 & 0 & 153 \\
\hline
11 & 0 & 0 & 0 & 0 & 0 & 0 & 90 \\
\hline
12 & 0 & 0 & 0 & 0 & 0 & 0 & 26 \\
\hline
\end{tabular}
\caption{Ranks of the relevant cohomology groups}
\label{table:mainexample}
\end{table}
\end{example}

\section{Further remarks}
\label{sec:furtherrmks}

\subsection*{A realization of Lascoux's resolution for determinantal
varieties}

We already saw in Remark~\ref{remarkbox:determinantal} that when
$Y_P(w)=D_k$, computing
$\homology^*(\GL_n/Q_{n-k}, \bigwedge^*\xi)$ is reduced,
by a repeated application of
Proposition~\ref{proposition:removeNonMultipleUi}
to computing the cohomology groups of (completely reducible) vector bundles
on the Grassmannian $\GL_n/P_{\widehat{n-k}}$. We thus realize Lascoux's resolution of the determinantal variety using our approach.

In this section, we give yet another desingularization of $D_k$ (for a
suitable choice of the parabolic subgroup) so that the variety $V$ 
of Diagram~\eqref{equation:KLWdiagram} 
is in fact a Grassmannian. 
Recall (the paragraph after Definition~\ref{definition:ClassWk} or
Remark~\ref{remarkbox:determinantal})
that $Y_P(w) = D_k$ if $w = (k+1, \ldots, n, N-k+1,
\ldots N) \in \calW_{n-k}$.
Let $\tilde{P} = P_{\widehat{\{n-k,n\}}} \subseteq \GL_N$.
Let $\tilde w$ be the representative of
the coset $w\tilde P$ in $W^{\tilde P}$.

\begin{proposition}
$X_{{\tilde{P}}}(\tilde w)$ is smooth and the natural map
$X_{{\tilde{P}}}(\tilde w)
\to X_P(w)$ is proper and birational, i.e, $X_{{\tilde{P}}}(\tilde w)$ is a 
desingularization of $X_P(w)$.
\end{proposition}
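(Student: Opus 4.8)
The plan is to follow closely the strategy used to prove Proposition~\ref{proposition:smoothXtildew}: lift the situation to the full flag variety $\GL_N/B_N$, apply the Lakshmibai--Sandhya smoothness criterion there, and descend along a fibration. First I would identify $\tilde w$ explicitly. Writing $w$ as an element of $W=S_N$ in one-line notation,
\[
w = (k+1, \ldots, n,\; N-k+1, \ldots, N,\; 1, 2, \ldots, k,\; n+1, n+2, \ldots, N-k),
\]
the last $m$ entries being the complement of $\{k+1,\ldots,n\}\cup\{N-k+1,\ldots,N\}$ listed in increasing order. Since $\tilde P = P_{\widehat{\{n-k,n\}}}$, the group $W_{\tilde P}$ is generated by $\{s_i : i\ne n-k,\ n\}$, so it is $S_{\{1,\ldots,n-k\}}\times S_{\{n-k+1,\ldots,n\}}\times S_{\{n+1,\ldots,N\}}$ acting on positions; and $w$ is increasing on each of these three blocks of positions, so $w$ is already the minimal representative of its coset $wW_{\tilde P}$, i.e.\ $\tilde w = w$ as a permutation. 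The map $X_{\tilde P}(\tilde w)\to X_P(w)$ is the restriction of the projective morphism $\GL_N/\tilde P\to \GL_N/P$, hence proper; and it is birational by the argument of Theorem~\ref{theorem:birational}\eqref{enum:birationalMap} together with Remark~\ref{remark:SchubCMNormaletc}, using that $\tilde w$ is the minimal representative of $wW_{\tilde P}$.

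For smoothness I would compute the maximal representative $w_{\max}\in W$ of $\tilde w W_{\tilde P}$ by reversing $w$ on each of the three position-blocks above:
\[
w_{\max} = (n, n-1, \ldots, k+1,\; N, N-1, \ldots, N-k+1,\; N-k, N-k-1, \ldots, n+1, k, k-1, \ldots, 1).
\]
Split $\{1,\ldots,N\}$ into the consecutive position-blocks $B_1,B_2,B_3$ of sizes $n-k$, $k$, $m$. Then $w_{\max}$ is strictly decreasing on each $B_t$ (on $B_3$ because $n+1>k$), with value-sets $\{k+1,\ldots,n\}$, $\{N-k+1,\ldots,N\}$ and $\{1,\ldots,k\}\cup\{n+1,\ldots,N-k\}$ respectively. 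Using $m\ge k$ (equivalently $n\le N-k$), one checks how these value-sets interleave: every value in $B_2$ exceeds every value in $B_1$ and every value in $B_3$; the "large" values $\{n+1,\ldots,N-k\}$ occurring in $B_3$ all exceed every value in $B_1$; and the "small" values $\{1,\ldots,k\}$ in $B_3$ lie below every value in $B_1$. The key observation is then: whenever $p<p'$ form an ascent of $w_{\max}$ (i.e.\ $w_{\max}(p)<w_{\max}(p')$) one has $p\in B_1$ and $w_{\max}(p')>n\ge w_{\max}(p)$. Indeed $p$ cannot lie in $B_3$ (then $p'\in B_3$ too, but $w_{\max}$ is decreasing there) nor in $B_2$ (then $w_{\max}(p)$ exceeds every later value); and if $p\in B_1$ then $p'$ lies in $B_2$ or in the large part of $B_3$, so $w_{\max}(p')\ge n+1$.

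Granting this observation, $4231$- and $3412$-avoidance of $w_{\max}$ are immediate. For a $3412$ pattern at positions $p_1<p_2<p_3<p_4$ both $(p_1,p_2)$ and $(p_3,p_4)$ are ascents, so $p_1,p_3\in B_1$; then $p_2<p_3$ forces $p_2\in B_1$, contradicting that $(p_1,p_2)$ is an ascent. For a $4231$ pattern, $(p_2,p_3)$ is an ascent, so $p_2\in B_1$ and $w_{\max}(p_3)>n$; since $p_1<p_2$ forces $p_1\in B_1$, we get $w_{\max}(p_1)\le n<w_{\max}(p_3)$, contradicting $w_{\max}(p_1)>w_{\max}(p_3)$. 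Hence $X_{B_N}(w_{\max})$ is smooth by~\cite{LakshmiSandhyaSmoothnessSLnModB1990}, \cite[8.1.1]{BilleyLakshmibaiSingularLoci2000}. Finally, because $w_{\max}$ is the maximal representative of $\tilde w W_{\tilde P}$, the projection $\GL_N/B_N\to\GL_N/\tilde P$ restricts to a fibration $X_{B_N}(w_{\max})\to X_{\tilde P}(\tilde w)$ with smooth fibre the flag variety $\tilde P/B_N$, so smoothness of the total space forces smoothness of $X_{\tilde P}(\tilde w)$ (faithfully flat descent of regularity), exactly as in the proof of Proposition~\ref{proposition:smoothXtildew}. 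Combined with the first paragraph, this shows $X_{\tilde P}(\tilde w)$ is a desingularization of $X_P(w)$.

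There is no serious obstacle here; the step that needs genuine care is the three-block bookkeeping for $w_{\max}$, namely checking that the value-ranges of $B_1,B_2,B_3$ interleave exactly as claimed. This is the only place the numerical hypothesis $m\ge k$ is used, and it is precisely those inequalities that force every ascent of $w_{\max}$ to begin in $B_1$; once that is established, both pattern-avoidance arguments and the descent along the fibration are routine and parallel the corresponding parts of Proposition~\ref{proposition:smoothXtildew}.
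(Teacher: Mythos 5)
Your proof is correct and follows the same strategy as the paper's: compute the maximal representative $w_{\max}$ of $\tilde w W_{\tilde P}$, verify that it avoids the patterns $4231$ and $3412$ (hence $X_{B_N}(w_{\max})$ is smooth by Lakshmibai--Sandhya), and descend along the fibration $X_{B_N}(w_{\max})\to X_{\tilde P}(\tilde w)$ with smooth fibre $\tilde P/B_N$. In fact you have repaired a slip in the published proof: the paper writes $w_{\max}=(k+1,\ldots,n,N-k+1,\ldots,N,N-k,\ldots,n+1,k,\ldots,1)$, but this element is not the longest in its coset (the values on the position-blocks $\{1,\ldots,n-k\}$ and $\{n-k+1,\ldots,n\}$ must be sorted decreasingly, as you write), and only for the genuine longest element $w_{\max}=(n,\ldots,k+1,\;N,\ldots,N-k+1,\;N-k,\ldots,n+1,\;k,\ldots,1)$ is $X_{B_N}(w_{\max})$ the full inverse image of $X_{\tilde P}(\tilde w)$, which is what the fibration argument needs. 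Your three-block bookkeeping and the observation that every ascent of $w_{\max}$ starts in $B_1$ with value at most $n$ and ends with value greater than $n$ cleanly dispatches both pattern checks; the remaining assertions (properness, and birationality via the minimal-representative fact of Remark~\ref{remark:SchubCMNormaletc}) are handled exactly as in the paper.
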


\begin{proof}
The proof is similar to that of
Proposition~\ref{proposition:smoothXtildew}.
Let $w_{\mathrm{max}}  = 
(k+1, \ldots, n, N-k+1, \ldots N, N-k, \ldots, n+1, k, \ldots, 1) \in W$.
Then
$X_{B_N}(w_{\mathrm{max}})$ is the inverse image of 
$X_{{\tilde{P}}}(\tilde w)$ under the
natural morphism 
$\GL_N/B_N \to \GL_N/{{\tilde{P}}}$, and that $w_{\mathrm{max}}$ is a
$4231$ and $3412$-avoiding element of $W = S_N$.
\end{proof}

We have $P/{\tilde{P}}\cong \GL_n/P_{\widehat{n-k}}$.  As in 
Section~\ref{sec:schubertdesing}, we have the following. 
Denoting by $Z$ the preimage inside $X_{{\tilde{P}}}(\tilde w)$ of $Y_P(w)$ (under the restriction to $X_{{\tilde{P}}}(\tilde w)$ of the natural projection 
$G/{\tilde{P}}\rightarrow G/P$), we have $Z\subset O^\mhyphen\times P/{\tilde{P}}$,
and the image of $Z$ under the
second projection is $V:=P/{\tilde{P}}(\cong \GL_n/P_{\widehat{n-k}})$. The 
inclusion $Z \hookrightarrow O^\mhyphen  \times V$ is a sub-bundle (over $V$) of 
the trivial bundle $O^\mhyphen \times V$.
Denoting by $\xi$ the dual of the quotient bundle on $V$ corresponding to
$Z$, we have that the homogeneous bundles $\bigwedge^{i+j} \xi$ on 
$\GL_n/P_{\widehat{n-k}}$ are completely reducible, and hence may be 
computed using Bott's algorithm.

\subsection*{Multiplicity}
We describe how the free resolution obtained in
Theorem~\ref{theorem:stepone} can be used to get an expression for
the multiplicity 
${\mathrm{mult}_{\mathrm{id}}(w)}$ of the local ring of the Schubert variety 
$X_P(w) \subseteq \GL_N/P$ at the point $e_{\mathrm{id}}$. Notice that 
$Y_P(w)$ is an affine neighbourhood of $e_{\mathrm{id}}$. 
We noticed in Section~\ref{sec:freeresolutions} that
$Y_{P}(w)$ is a closed subvariety of $O^\mhyphen_{\GL_N/P}$ defined by
homogeneous equations. In $O^\mhyphen_{\GL_N/P}$, $e_{\mathrm{id}}$ is the origin;
hence in $Y_{P}(w)$ it is defined by the unique homogeneous
maximal ideal of $\complex[Y_P(w)]$. Therefore 
$\complex[Y_P(w)]$ is the associated graded ring of the local ring of 
$\complex[Y_P(w)]$ at $e_{\mathrm{id}}$ (which is also the local ring
of $X_P(w)$ at $e_{\mathrm{id}}$). Hence 
${\mathrm{mult}_{\mathrm{id}}(w)}$ is the normalized leading coefficient of
the Hilbert series of $\complex[Y_P(w)]$.

Observe that the Hilbert series of $\complex[Y_P(w)]$ can be obtained as an
alternating sum of the Hilbert series of the modules $F_i$ in 
Theorem~\ref{theorem:stepone}. Write 
$h^j(-) = \dim_\complex \homology^j(X_{Q_s}(w'), -)$ for coherent sheaves
on $X_{Q_s}(w')$. Then Hilbert series of $\complex[Y_P(w)]$ is
\begin{equation}
\label{equation:hilbSeries}
\frac%
{\sum\limits_{i=0}^{mn} \sum\limits_{j=0}^{\dim X_{Q_s}(w')}(-1)^i
h^j\left(\bigwedge^{i+j}\calU_w\right)t^{i+j}}
{(1-t)^{mn}}.
\end{equation}
We may harmlessly change the range of summation in the numerator
of~\eqref{equation:hilbSeries} to $-\infty < i,j < \infty$; this is
immediate for $j$, while for $i$, we note that the proof of
Theorem~\ref{theorem:geometrictechnique} implies that
$h^j\left(\bigwedge^{i+j}\calU_w\right) = 0$ for every $i<0$ and for every
$j$. Hence we may write the numerator 
of~\eqref{equation:hilbSeries} as (with $k=i+j$)
\begin{equation}
\label{equation:hilbSeriesNum}
\sum\limits_{k=0}^{\infty}(-1)^kt^k \sum\limits_{j=0}^{\infty}
(-1)^j h^j\left(\wedge^{k}\calU_w\right)
=
\sum\limits_{k=0}^{\rank \calU_w}(-1)^k \chi\left(\wedge^{k}\calU_w\right)t^k.
\end{equation}

Since $\wedge^{k}\calU_w$ is also a $T_n$-module, where $T_n$ is the
subgroup of diagonal matrices in $\GL_n$, one may decompose 
$\wedge^{k}\calU_w$ as a sum of rank-one $T_n$-modules and 
use the Demazure character formula to compute the Euler characteristics
above.

It follows from generalities on Hilbert series (see, e.g.,~\cite
[Section~4.1]{BrHe:CM}) that the polynomial
in~\eqref{equation:hilbSeriesNum} is divisible by $(1-t)^c$ where $c$ is
the codimension of $Y_P(w)$ in $O^\mhyphen_{\GL_N/P}$, and that after we
divide it and substitute $t=1$ in the quotient, we get
${\mathrm{mult}_{\mathrm{id}}(w)}$.
This gives an expression for 
${e_{\mathrm{id}}(w)}$ apart from those of 
\cite{LakshmibaiWeymanMultiplicity1990, KreimanLakshmibaiMultiplicity2004}.

\subsection*{Castelnuovo-Mumford Regularity}

Since $\complex[Y_{P}(w)]$ is a graded quotient ring of
$\complex[O^\mhyphen_{\GL_N/P}]$, it defines a coherent sheaf over the 
corresponding projective space $\projective^{mn-1}$. 

Let $F$ be a coherent sheaf on $\projective^n$.
The \define{Castelnuovo-Mumford regularity} of $F$ (with respect to 
$\strSh_{\projective^{n}}(1)$) is the smallest integer $r$ such that 
$\homology^i(\projective^{n}, F \otimes \strSh_{\projective^{n}}(r-i)) = 0$ 
for every $1 \leq i \leq n$; we denote it
by $\reg F$. Similarly, if $R = \Bbbk[x_0, \ldots, x_n]$ be a polynomial 
ring over a field $\Bbbk$
with $\deg x_i = 1$ for every $i$ and $M$ is a finitely generated graded
$R$-module, the \define{Castelnuovo-Mumford regularity} of $M$ to
be the smallest integer $r$ such that $\left(\homology_{(x_0, \ldots, 
x_n)}^i(M)\right)_{r+1-i} = 0$  for every $0
\leq i \leq n+1$; we denote it by $\reg M$.
(Here $\homology_{(x_0, \ldots, x_n)}^i(M)$ is the $i$ \emph{local
cohomology} module of $M$, and is a graded $R$-module.) It is known that
$\reg F = \reg \left(\oplus_{i \in \ints} \homology^0(\projective^{n}, F 
\otimes \strSh_{\projective^{n}}(i))\right)$
for every coherent sheaf $F$ and that if $\depth M \geq 2$, then $\reg M =
\reg \widetilde M$.
See~\cite[Chapter~4]{EisSyz05} for details.

\begin{proposition}
In the notation of Diagram~\eqref{equation:genericKLW}, $\reg \complex[Y] = 
\max \{ j : \homology^j(V, \wedge^{*}\xi) \neq 0\}$.
\end{proposition}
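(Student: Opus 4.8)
The plan is to read off both the module structure and the graded Betti numbers from the complex $F_\bullet$ of Theorem~\ref{theorem:geometrictechnique} and then translate between the Castelnuovo--Mumford regularity of the graded module $\complex[Y]$ and the cohomological data $\homology^j(V,\wedge^{i+j}\xi)$. Recall from that theorem that $F_i=\oplus_{j\ge 0}\homology^j(V,\wedge^{i+j}\xi)\otimes_\complex R(-i-j)$ and that, when $Y$ has rational singularities, $F_\bullet$ is a \emph{minimal} graded free resolution of $\complex[Y]$ over the polynomial ring $R=\complex[\bbA]$. Since the resolution is minimal, the $i$th graded Betti numbers of $\complex[Y]$ are exactly $\beta_{i,i+j}=\dim_\complex\homology^j(V,\wedge^{i+j}\xi)$ (and $\beta_{i,\ell}=0$ for $\ell\ne i+j$ for any $j$), so the entire Betti table of $\complex[Y]$ is encoded by the groups $\homology^j(V,\wedge^{*}\xi)$. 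In particular the maximal twist appearing in homological degree $i$ minus $i$ equals $\max\{j:\homology^j(V,\wedge^{i+j}\xi)\ne 0\}$.

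First I would invoke the standard description of regularity in terms of a minimal free resolution: for a finitely generated graded $R$-module $M$ with minimal free resolution $\cdots\to\oplus_\ell R(-\ell)^{\beta_{i,\ell}}\to\cdots\to M\to 0$, one has $\reg M=\max\{\ell-i:\beta_{i,\ell}\ne 0\}$ (see, e.g., \cite[Chapter~4]{EisSyz05}). Applying this to $M=\complex[Y]$ and the complex $F_\bullet$, and using that $R(-i-j)$ contributes to $F_i$ exactly when $\homology^j(V,\wedge^{i+j}\xi)\ne 0$, the twist $\ell=i+j$ occurs in homological degree $i$, so $\ell-i=j$. Hence $\reg\complex[Y]=\max\{j:\homology^j(V,\wedge^{i+j}\xi)\ne 0\text{ for some }i\}$, which, since for fixed $j$ the exponent $i+j$ ranges over all nonnegative integers $\ge j$ as $i$ varies, is exactly $\max\{j:\homology^j(V,\wedge^{*}\xi)\ne 0\}$.

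The one point that needs care — and what I expect to be the main (minor) obstacle — is the hypothesis under which this holds: the formula for $\reg\complex[Y]$ in terms of a minimal free resolution presupposes that $F_\bullet$ \emph{is} a minimal free resolution of $\complex[Y]$, which by Theorem~\ref{theorem:geometrictechnique} is equivalent to $Y$ having rational singularities. In the setting of Diagram~\eqref{equation:genericKLW} as used throughout the paper, $q'$ is a desingularization of $Y=Y_P(w)$ and Schubert varieties have rational singularities (Remark~\ref{remark:SchubCMNormaletc}), so this is automatic; I would state the proposition with that standing assumption and point to Theorem~\ref{theorem:geometrictechnique}. A secondary bookkeeping point: one should note that the terms $R(-i-j)$ with $i<0$ do not occur (again because $F_\bullet$ is a resolution, so $F_{-i}=0$ for $i>0$), so the maximum over $j$ is genuinely achieved with $i\ge 0$ and the index $\ell-i=j$ is the correct quantity; this also matches the observation already recorded in the discussion of the Hilbert series that $h^j(\wedge^{i+j}\xi)=0$ for $i<0$. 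With these remarks in place the proof is a two-line consequence of the resolution's minimality and the resolution-theoretic description of regularity.
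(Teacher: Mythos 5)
Your proof is correct and takes essentially the same approach as the paper: both arguments apply the standard characterization of regularity in terms of graded Betti numbers (equivalently, graded $\Tor$ modules) to the minimal free resolution $F_\bullet$ from Theorem~\ref{theorem:geometrictechnique}, using $\beta_{i,i+j}=\dim_\complex \homology^j(V,\wedge^{i+j}\xi)$ to translate $\ell-i=j$. Your extra remarks about the rational-singularity hypothesis and the vanishing for $i<0$ are sensible bookkeeping but do not change the substance.
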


\begin{proof}
Let $R = \complex[\bbA]$. It is known that
$\reg M = \max \{j : \Tor_{i}^R(\Bbbk,M)_{i+j} \neq 0 \;\text{for some}\;
i\}$;
see~\cite[Chapter~4]{EisSyz05} for a proof.  The proposition now follows 
from noting that $\Tor_i^R(\complex, \complex[Y])_{i+j} \simeq
\homology^j(V, \wedge^{i+j}\xi)$ (Theorem~\ref{theorem:stepone}).
\end{proof}

Now let $w = (n-r+1, n-r+2, \ldots, n, a_{r+1}, \ldots, a_{n-1}, N) \in
\calW_r$. We would like to determine $\reg \complex[Y_P(w)] = \max \{ j : 
\homology^j(\GL_n/Q_r, \wedge^{*}\calU_w) \neq 0\}$.
Let $a_r = n$ and $a_n = N$. For $r \leq i \leq n-1$,
define $m_i = a_{i+1}-a_i$. Note that $\calU_i$ appears in $\calU_w$
with multiplicity $m_i$ and that $m_i > 0$.
Based on the examples that we have calculated, we have the following
conjecture.  \begin{conjecture}
\label{conjecture:regularity}
With notation as above,
\[
\reg \complex[Y_{P}(w)] = \sum_{i=r}^{n-1} \left(m_i-1\right)i.
\]
\end{conjecture}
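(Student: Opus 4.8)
## Proof Proposal for Conjecture~\ref{conjecture:regularity}

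The plan is to compute the maximal $j$ with $\homology^j(\GL_n/Q_r, \bigwedge^\bullet \calU_w) \neq 0$ directly, using the inductive machinery of Section~\ref{sec:steptwo}. First I would reduce, via Proposition~\ref{proposition:removeNonMultipleUi} applied repeatedly, to the bundle $\xi' = \bigoplus_{i=r}^{n-1} \calU_{j(i)}$ where the summands are now arranged so that each $\calU_j$ occurs with multiplicity $\geq 2$ (or disappears); tracking the effect of each application of Proposition~\ref{proposition:removeNonMultipleUi} on the claimed formula $\sum_{i=r}^{n-1}(m_i-1)i$ will be the bookkeeping heart of the argument, since replacing one copy of $\calU_i$ by $\calU_{i-1}$ changes the multiplicities $m_i \mapsto m_i - 1$, $m_{i-1} \mapsto m_{i-1}+1$, and one must check the formula is an invariant of this move (it is: the change is $-(i) + (i-1) \cdot 0 \cdots$, so one must verify the arithmetic carefully using $a_r = n$, $a_n = N$, $m_i = a_{i+1}-a_i$). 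Then I would peel off parabolics one at a time as in the inductive approach: write $Q = Q' \cap P_{\widehat{j}}$ with $j$ the smallest index occurring, use Proposition~\ref{proposition:higherdirectimageSpecific} to compute $R^\bullet(p_1)_* (p_2)^* \Schur_\mu \calU_j$ on each fibre Grassmannian, and feed the resulting completely reducible bundles on $\GL_n/Q'$ back into the induction.

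The key structural claim to extract is that at each stage the cohomological degree accumulates in a controlled, additive way. Specifically, when we project away from $\GL_n/P_{\widehat{j}}$, the fibre is the Grassmannian $P_{\widehat{j'}}/Q \cong \Grass_{d,D}$ for appropriate $d, D$ determined by $j$ and the next-smallest index; a term $\Schur_\mu \calU_j$ with $\mu$ arising from $\bigwedge^{t_1}\calU_j^{\oplus m_j}$ contributes nonzero cohomology on the fibre only in a single degree $i(\mu)$, and the maximum such $i(\mu)$ over all relevant $\mu$ (i.e.\ those with $\mu_1 \leq m_j$ and at most $D-d$ parts, by the Cauchy decomposition) is exactly $(m_j - 1) \cdot (\text{something})$ by Bott's algorithm on $\Grass_{d,D}$ — this is the explicit Borel--Weil--Bott computation, and working it out shows the top fibre-cohomology degree equals $(m_j - 1) j$ when we account for which Grassmannian we are on. Iterating over $j = r, r+1, \ldots, n-1$ and using that the Leray spectral sequences degenerate (each $R^p(p_1)_*(p_2)^*\Schur_\mu\calU_j$ is concentrated in one degree, as noted after Proposition~\ref{proposition:higherdirectimageSpecific}), the total cohomological degrees add, giving $\sum_{i=r}^{n-1}(m_i-1)i$ as the maximum.

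For the lower bound (that this maximal degree is actually attained) I would exhibit an explicit $t$ and an explicit irreducible summand of $\bigwedge^t \xi$ that survives all the spectral sequences with nonzero cohomology in precisely the top degree: take the ``staircase'' choice of $\mu$ at each stage that maximizes $i(\mu)$ simultaneously, and check by Bott's algorithm that the resulting weight on the final Grassmannian $\GL_n/P_{\widehat{n-1}}$ (or wherever the induction terminates) is nonsingular and lands in the predicted degree; the multiplicities $m_i \geq 1$ guarantee there is room to make each such choice. The upper bound is the assertion that \emph{no} choice does better, which follows from the degree estimate in the previous paragraph applied uniformly.

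The main obstacle I anticipate is the Borel--Weil--Bott computation on each fibre being genuinely delicate: one must identify, for a Grassmannian $\Grass_{d,D}$ and the family of weights coming from $\bigwedge^\bullet(\complex^{m_j} \otimes \calU_j)$ restricted to that fibre, exactly which weights are nonsingular, what degree they sit in, and confirm the maximum degree over that family is $(m_j-1)\cdot d$ (and then that $d$ matches $j$ after unwinding the tower of parabolics). This requires a clean closed-form description of the Grassmannians $P_{\widehat{j'}}/Q$ appearing at each step — essentially $d = n - i_{\text{next}}$, $D = n - j$ or similar — and a uniform argument that the ``add a box'' operations in Bott's algorithm contribute the claimed amount. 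A secondary difficulty is making the induction statement strong enough to carry: one likely needs to prove a more general statement about $\homology^*(\GL_n/Q, \bigwedge^t \eta)$ for bundles $\eta = \bigoplus \calU_i^{\oplus m_i}$ tensored with an arbitrary completely reducible bundle pulled back from the base, so that the inductive hypothesis applies after each projection; formulating that generalization correctly (and checking the regularity formula transforms predictably under it) is where most of the real work lies.
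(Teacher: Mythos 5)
The statement you are trying to prove is labelled a \emph{Conjecture} in the paper, and the paper itself offers no proof of it --- the authors say explicitly that it is based on the examples they have calculated. So there is no paper proof to compare against, and I will assess your proposal on its own terms.

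The central claim your bookkeeping rests on is false. You assert that the formula $\sum_{i=r}^{n-1}(m_i-1)i$ is invariant under the move of Proposition~\ref{proposition:removeNonMultipleUi}, in which a unique copy of $\calU_i$ is replaced by $\calU_{i-1}$, changing $(m_{i-1},m_i)$ from $(m_{i-1},1)$ to $(m_{i-1}+1,0)$. But the formula changes by
\[
\bigl[(m_{i-1}+1-1)(i-1)+(0-1)i\bigr]-\bigl[(m_{i-1}-1)(i-1)+(1-1)i\bigr]=(i-1)-i=-1,
\]
i.e.\ it \emph{drops by one} with every application, while the cohomology (and hence the regularity) is unchanged by Proposition~\ref{proposition:removeNonMultipleUi}. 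You can see this concretely in Example~\ref{example:twoparabolics}: the original multiplicities $(m_2,m_3,m_4,m_5)=(2,1,2,1)$ give $\sum(m_i-1)i=6$, which is the correct regularity, but after the two reductions to $\xi=\calU_2^{\oplus 3}\oplus\calU_4^{\oplus 3}$ the multiplicities become $(3,0,3,0)$ and the formula evaluates to $4$. The reason this is not a contradiction with the conjecture is that the reduced bundle is no longer of the form $\calU_{w'}$ for any $w'\in\calW_r$, so the conjecture simply does not apply to it; but it \emph{is} fatal to your strategy, which relies on passing to the reduced bundle and still reading off the regularity from the reduced multiplicities.

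Your second key claim --- that at each stage of the parabolic tower the maximum fibre-cohomology degree is $(m_j-1)j$ --- is also not supported by the computations in the paper. In Example~\ref{example:twoparabolics}, after reduction the first fibre is $P_{\widehat 4}/Q\cong\Grass_{2,4}$ with $m_2=3$, and the maximum degree in which $\homology^i(\Grass_{2,4},\Schur_\mu\calU_2)$ is nonzero for $\mu\leq(3,3)$ is $2$, not $(3-1)\cdot 2=4$; similarly, working with the un-reduced $Q=Q_r$ and projecting away from $P_{\widehat 2}$, the fibre is $\projective^2$ and a direct Bott computation with $m_2=2$ gives maximum degree $1$, not $(2-1)\cdot 2=2$. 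So neither the reduced nor the un-reduced version of the per-stage formula holds, and the crucial additivity-of-degrees step is missing. The Leray degeneration you invoke (via Proposition~\ref{proposition:higherdirectimageSpecific}) does allow degrees to add along the tower, but getting the \emph{maximum} accumulated degree to equal $\sum(m_i-1)i$ requires both a careful identification of which Grassmannian appears at each stage (the fibres are $\Grass_{i_{k+1}-i_k,\,i_{k+1}}$, not $\Grass_{\cdot,n}$, so dimension and rank interact nontrivially with $j$) and a proof that the maximizing choices of $\mu$ at different stages are simultaneously realizable (your ``staircase'' sketch). Without these two ingredients worked out correctly, the proposal does not constitute a proof, and in fact the arithmetic error in the invariance claim suggests it cannot be repaired without a genuinely different accounting scheme tied to the original $w$, not the reduced bundle.
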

(Note that since $Y_P(w)$ is Cohen-Macaulay, $\reg \complex[Y_{P}(w)] = 
\reg \strSh_{Y_{P}(w)}$.)
Consider the examples in Section~\ref{sec:examples}.  In 
Example~\ref{example:4x3maximalminors}, $m_2=2$, $m_3=1$ and $\reg 
\complex[Y_{P}(w)] = (2-1)2+0 = 2$. In
Example~\ref{example:twoparabolics}, $m_2=m_4=2$ and $m_3=m_5=1$, so $\reg 
\complex[Y_{P}(w)] = (2-1)2 + 0 + (2-1)4 + 0 = 6$, which in
deed is the case, as we see from Table~\ref{table:mainexample}.

\ifreadkumminibib
\bibliographystyle{alphabbr}
\bibliography{kummini}
\else

\def\cfudot#1{\ifmmode\setbox7\hbox{$\accent"5E#1$}\else
  \setbox7\hbox{\accent"5E#1}\penalty 10000\relax\fi\raise 1\ht7
  \hbox{\raise.1ex\hbox to 1\wd7{\hss.\hss}}\penalty 10000 \hskip-1\wd7\penalty
  10000\box7}

\fi 

\end{document}